\newtheorem{theorem}{Theorem}[section]
\newtheorem{proposition}[theorem]{Proposition}
\newtheorem{lemma}[theorem]{Lemma}
\newtheorem{corollary}[theorem]{Corollary}
\newtheorem{definition}[theorem]{Definition}
\theoremstyle{remark}
\newtheorem*{claim}{Claim}
\newtheorem*{remark}{Remark}
\newcommand{\sigmadet}[1]{(\mathbf{\Sigma^0_2})_{#1}\textrm{-}\mathrm{Det}}
\newcommand{\ouraxiom}{\forall x[(\mathbf{\Sigma^0_2})_x\textrm{-}\mathrm{Det}]}
\newcommand{\ouraxiomcantor}{\forall x[(\mathbf{\Sigma^0_2})_x\textrm{-}\mathrm{Det}^*]}
\newcommand{\ca}{\textrm{-}\mathsf{CA}_0}
\newcommand{\ind}{\textrm{-}\mathsf{IND}}
\newcommand{\aca}{\mathsf{ACA}_0}
\newcommand{\wkl}{\mathsf{WKL}_0}
\newcommand{\bbn}{\mathbb{N}}
\newcommand{\tuple}[1]{\langle #1 \rangle}
\newcommand{\as}[2]{\forall #1 \! < \! #2 \,}
\newcommand{\eb}[2]{\exists #1 \! \le \! #2 \,}
\newcommand{\Aa}{{\mathcal A}}
\newcommand{\Bb}{{\mathcal B}}
\newcommand{\Dd}{{\mathcal D}}
\newcommand{\Pp}{{\mathcal P}}
\newcommand{\Ss}{{\mathcal S}}
\title{How unprovable is Rabin's decidability theorem?}
\author{Leszek Aleksander Ko{\l}odziejczyk\thanks{Partially supported by Polish National Science Centre grant no. 2013/09/B/ST1/04390.} ~and Henryk Michalewski\thanks{Partially supported by Polish National Science Centre grant no.~2014/13/B/ST6/03595}}
\affil{Institute of Mathematics, University of Warsaw, Poland}
\date{\today}
\begin{document}
\maketitle

\begin{abstract}
We study the strength of set-theoretic axioms needed to prove Rabin's theorem on the decidability of the MSO theory of the infinite binary tree. We first show that the complementation theorem for tree automata, which forms the technical core of typical 
proofs of Rabin's theorem, is equivalent over the moderately strong second-order arithmetic theory $\mathsf{ACA}_0$ to a determinacy principle implied by the positional determinacy of all parity games and implying the determinacy of all Gale-Stewart games given by boolean combinations of ${\bf \Sigma^0_2}$ sets. It follows that complementation for tree automata is provable from $\Pi^1_3$- but not $\Delta^1_3$-comprehension.

We then use results due to MedSalem-Tanaka, M\"ollerfeld and Heinatsch-M\"ollerfeld to prove that over $\Pi^1_2$-comprehension, the complementation theorem for tree automata, decidability of the MSO theory of the infinite binary tree, positional determinacy of  parity games and determinacy of $\mathrm{Bool}({\bf \Sigma^0_2})$ Gale-Stewart games are all equivalent. Moreover, these statements are equivalent to the $\Pi^1_3$-reflection principle for $\Pi^1_2$-comprehension. It follows in particular that Rabin's decidability theorem is not provable in $\Delta^1_3$-comprehension.
\end{abstract}

\section{Introduction}

Rabin's decidability theorem \cite{rabin_dec} says that the monadic second order (MSO) theory of the infinite binary tree $\{0,1\}^*$ with the left and right successor relations is decidable. In the words of the  book ``The Classical Decision Problem'' \cite{gradel}, Rabin's result is ``one of the most important decidability theorems for mathematical theories and has numerous applications in several areas of mathematics and computer science''  (for a discussion, see e.g.\ \cite[Chapter 7]{gradel}).

Unlike other prominent decidability results, such as the ones for Presburger arithmetic, real-closed fields or even the MSO theory of $(\bbn, \le)$, Rabin's theorem appears likely to involve significant logical strength, in the sense of being unprovable without axioms asserting the existence of very abstract and complicated sets. This is exemplified by the fact that typical modern proofs of Rabin's theorem, dating back to \cite{gurevich_harrington},  invoke the determinacy of certain infinite games with Borel winning conditions, and determinacy principles are notorious for requiring very large logical strength.

The framework of \emph{reverse mathematics} (see \cite{simpson})
offers a natural way of measuring the logical strength of a theorem. The idea is that many mathematical theorems can be formalized in the language of \emph{second-order arithmetic}, a foundational axiomatic theory used already by Hilbert and Bernays. The most important axiom of second-order arithmetic is the comprehension scheme, stating the existence of any set of natural numbers defined by first- and second-order quantification over $\bbn$. Reverse mathematics proceeds by analyzing various mathematical statements and proving their equivalence, over a suitable weak base theory, to some rather limited form of comprehension. Most mathematical theorems analyzed in this fashion have turned out to require no more than the theory $\aca$ allowing only \emph{arithmetical} comprehension, that is, the existence of sets defined without any second-order quantifiers. Theorems requiring strictly more than $\Pi^1_1$-comprehension, or the existence of sets defined in terms of one second-order quantifier, are quite exceptional. Such exceptions include the celebrated graph minor theorem \cite{friedman_robertson_seymour}, and a theorem in general topology proved equivalent to $\Pi^1_2$-comprehension in \cite{mummert_simpson}.

Determinacy theorems are a more extreme exception. $\Pi^1_1$-comprehension is barely enough to prove the determinacy of games in which the winning condition is the intersection of an open set and a closed set. $\Pi^1_2$-comprehension proves that $F_\sigma$ games are determined \cite{tanaka}, but can no longer do so for games given by arbitrary boolean combinations of $F_\sigma$ sets (essentially\footnote{The original reasoning presented in \cite{medsalem_tanaka_delta_03} applies to $\mathbf{\Delta^0_3}$ sets, but it is easily adaptable to the case of boolean combinations of $F_\sigma$ sets. We present this argument in Section \ref{sec:determinacy-comprehension}. } \cite{medsalem_tanaka_delta_03}). For arbitrary boolean combinations of $F_{\sigma\delta}$ sets, proving determinacy requires going beyond second-order arithmetic, an immensely strong theory in most other respects \cite{montalban_shore}. 

The aim of this paper is to analyze, reverse mathematics--style, the logical strength of Rabin's theorem. We first focus our attention 
on the complementation theorem for automata on infinite trees, which is the key ingredient in typical proofs 
of Rabin's theorem and is often simply considered an alternative formulation of the decidability result. We prove that the complementation theorem is equivalent over $\aca$ to a determinacy principle intermediate
between the determinacy of games given by arbitrary boolean combinations of $F_\sigma$ sets and the slightly stronger statement that all parity games \cite{emerson_jutla} are positionally determined\footnote{The idea of considering the parity acceptance condition for automata in order to prove Rabin's decidability theorem was first proposed in \cite{emerson_jutla}.}. Using earlier work on determinacy of Gale-Stewart games, we conclude that the complementation theorem is provable in $\Pi^1_3$- but not $\Pi^1_2$- or even $\Delta^1_3$-comprehension.

We then consider Rabin's decidability result itself. Using the work of \cite{medsalem_tanaka_delta_03}, we prove that over $\Pi^1_2$-comprehension, already the statement ``the $\Pi^1_3$ fragment of the MSO theory of $\{0,1\}^*$ is decidable'' implies determinacy of arbitrary boolean combinations of $F_\sigma$ sets, which makes it unprovable in $\Delta^1_3$-comprehension. On the other hand, any version of the decidability theorem that can be stated in second-order arithmetic follows from the positional determinacy of parity games, and thus also from $\Pi^1_3$-comprehension.

The final part of our work relies on techniques developed in Michael M\"ollerfeld's PhD thesis \cite{mollerfeld_thesis}, which links $\Pi^1_2$-comprehension with an arithmetical version of the $\mu$-calculus. Using a slight strengthening of  M\"ollerfeld's results, we show that over $\Pi^1_2$-comprehension, the complementation theorem for automata, Rabin's decidability theorem, and all the main determinacy principles studied in the paper are actually equivalent. Moreover, all these statements are equivalent to a purely logical \emph{reflection principle}: ``all $\Pi^1_3$ sentences provable using $\Pi^1_2$-comprehension are true'' (note that the unprovablity of this priciple in $\Pi^1_2$-comprehension follows from G\"odel's second incompleteness theorem).

Our paper is structured as follows. Section \ref{sec:preliminaries} presents the necessary background in reverse mathematics, automata theory and games. In Section \ref{sec:complementation}, we characterize the logical strength of the complementation theorem for tree automata in terms of a determinacy principle. We review some earlier results on determinacy principles in Section \ref{sec:determinacy-comprehension}, and then use them to analyze the strength of Rabin's decidability theorem in Section \ref{sec:decidability}. Our final result linking complementation for automata, decidability of MSO and various determinacy statements with a reflection principle for $\Pi^1_2$-comprehension is discussed in Section \ref{sec:reflection}.


\section{Basic notions}\label{sec:preliminaries}

\subsection{Second-order arithmetic} \emph{Second-order arithmetic} is a natural framework for studying the strength of axioms needed to prove theorems of countable mathematics, that is, the part of mathematics concerned with objects that can be represented using no more than countably many bits of information. The two-sorted language of second-order arithmetic, $L_2$, contains \emph{first-order} variables $x, y, z, \ldots$, intended to range over natural numbers, and \emph{second-order} variables $X,Y,Z,\ldots$, intended to range over sets of natural numbers. $L_2$ includes the usual arithmetic functions and relations $+, \cdot, \le, 0,1$ on the first-order sort, and the $\in$ relation which has one first-order and one second-order argument. 

\emph{Full second-order arithmetic}, $\mathsf{Z}_2$, has axioms of three types: (i) axioms stating that the first-order sort is the non-negative part of a discretely ordered ring; (ii) comprehension axioms, or sentences of the form
\[\forall \bar Y \, \forall \bar y\, \exists X\, \forall x\, (x\in X \Leftrightarrow \varphi(x, \bar Y, \bar y)),\]
where $\varphi$ is an arbitrary formula of $L_2$ not containing the variable $X$; (iii) the induction axiom,
\[\forall X\, \left[0 \in X \land \forall x\, (x \in X \Rightarrow x+1 \in X)\Rightarrow \forall x\,(x\in X) \right].\]

The language $L_2$ is surprisingly expressive, as the first-order sort can be used to encode arbitrary finite objects and the second-order sort can encode even such objects as complete separable metric spaces, continuous functions between them, and Borel sets within them (cf.\ \cite[Chapters II.5, II.6, V.3]{simpson}). Moreover, the theory $\mathsf{Z}_2$ is very strong: almost all theorems from a typical undergraduate course that are expressible in $L_2$ can be proved 
in $\mathsf{Z}_2$. In fact, the basic observation underlying the programme of reverse mathematics \cite{simpson} is that  many important theorems are equivalent to various fragments of $\mathsf{Z}_2$, where the equivalence is proved in some specific weaker fragment, referred to as the \emph{base theory}. 

For relatively strong theorems such as the ones studied in this paper, a reasonable base theory is $\aca$, the fragment of $\mathsf{Z}_2$ obtained by restricting the comprehension scheme to instances where the formula $\varphi$ is \emph{arithmetical}, that is, does not contain second-order quantifiers (second-order free variables are allowed). Stronger fragments can be obtained by allowing comprehension for $\varphi$ with a fixed number of second-order quantifiers. A formula is $\Pi^1_n$ if it has the form
\[\forall X_1\, \exists X_2 \, \ldots \mathsf{Q} X_n\, \psi\]
with $\psi$ arithmetical; it is $\Sigma^1_n$ if it is the negation of a  $\Pi^1_n$ formula; it is $\Delta^1_n$ if it is equivalent to both a $\Pi^1_n$ and a $\Sigma^1_n$ formula. The theory $\Pi^1_n\ca$ is obtained by restricting the comprehension scheme to instances where $\varphi$ is $\Pi^1_n$.  In the subtheory $\Delta^1_n\ca$ of $\Pi^1_n\ca$, the comprehension scheme takes the form
\[\forall \bar Y \, \forall \bar y\, \left[\forall x\, (\varphi(x,\bar Y, \bar y) \Leftrightarrow \neg\psi(x,\bar Y, \bar y)) \Rightarrow \exists X\, \forall x\, (x\in X \Leftrightarrow \varphi(x, \bar Y, \bar y)) \right],\]
where both $\varphi$ and $\psi$ are $\Pi^1_n$.

The induction scheme $\Sigma^1_n\textrm{-}\mathsf{IND}$ consists of the sentences
\[\forall \bar Y \, \forall \bar y\, \left[\varphi(0,\bar Y, \bar y) \land \forall x\, (\varphi(x,\bar Y, \bar y) \Rightarrow \varphi(x+1,\bar Y, \bar y)) \Rightarrow \forall x\,\varphi(x,\bar Y, \bar y) \right] \]
where $\varphi$ is $\Pi^1_n$ or $\Sigma^1_n$. By the induction and comprehension axioms, $\Pi^1_n\ca$ proves $\Sigma^1_n\textrm{-}\mathsf{IND}$. On the other hand, $\Pi^1_n\ca$ does not prove 
$\Sigma^1_{n+1}\textrm{-}\mathsf{IND}$, while $\bigcup_{n \!\in \!\omega} \Sigma^1_{n}\textrm{-}\mathsf{IND}$ does not prove $\Pi^1_{n}\ca$ even assuming $\Pi^1_{n-1}\ca$ (cf.\ \cite{simpson}).

 In this paper, the most prominent theory is $\Pi^1_2\ca$. We present some important principles provable in $\Pi^1_2\ca$. The first two principles are related to countable sequences of sets. Such a sequence $\tuple{X_i}_{i \in \bbn}$ can be represented by a single set $X$ if we let
\[x \in X_i \Leftrightarrow \tuple{i,x} \in X,\]
where $\tuple{\cdot,\cdot}$ is some standard pairing function. We write $X \in \tuple{X_i}_{i \in \bbn}$ if there is some $i$ such that $X = X_i$.

\begin{definition}
The \emph{$\Sigma^1_2$ axiom of choice}, $\Sigma^1_2\textrm{-}\mathsf{AC}$, is the axiom scheme consisting of sentences of the form
\[\forall \bar Z \, \forall \bar z\, [\forall x\, \exists Y\, \varphi(x,Y,\bar Z, \bar z) \Rightarrow \exists \tuple{Y_x}_{x \in \bbn}\, \forall x\, \varphi(x, Y_x, \bar Z, \bar z)],\]
where $\varphi$ is $\Sigma^1_2$.
\end{definition}
\begin{theorem}\label{thm:choice}(see \cite[Theorem VII.6.9]{simpson})
$\Pi^1_2\ca \vdash \Sigma^1_2\textrm{-}\mathsf{AC}$.
\end{theorem}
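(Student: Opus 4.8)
The plan is to derive $\Sigma^1_2\textrm{-}\mathsf{AC}$ inside $\Pi^1_2\ca$ from the classical $\Sigma^1_2$ uniformization theorem of Novikov, Kond\^o and Addison. Recall that already $\Pi^1_1\ca$ proves the Kond\^o uniformization theorem, that every $\Pi^1_1$ relation has a $\Pi^1_1$ uniformization, and hence --- after projecting away an existential set quantifier --- the following parametrized selector principle: for every $\Sigma^1_2$ formula $\varphi(x,Y,\bar Z)$ there is a $\Sigma^1_2$ formula $\varphi^*(x,Y,\bar Z)$ such that (i) $\varphi^*(x,Y,\bar Z)\to\varphi(x,Y,\bar Z)$; (ii) for each $x$ and $\bar Z$ at most one $Y$ satisfies $\varphi^*(x,Y,\bar Z)$; and (iii) $\exists Y\,\varphi(x,Y,\bar Z)\to\exists Y\,\varphi^*(x,Y,\bar Z)$. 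Since $\Pi^1_1\ca$ is a subsystem of $\Pi^1_2\ca$, this principle is at our disposal. We will use one further consequence of $\Pi^1_2\ca$, namely $\Sigma^1_2$-comprehension: given a $\Sigma^1_2$ formula, apply $\Pi^1_2$-comprehension to its negation and take the complement.

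With these ingredients the derivation is brief, and runs as follows. Work in $\Pi^1_2\ca$, fix an instance of $\Sigma^1_2\textrm{-}\mathsf{AC}$ with $\Sigma^1_2$ formula $\varphi$ and parameters $\bar Z$ (the number parameters are harmless and we suppress them), and assume the hypothesis $\forall x\,\exists Y\,\varphi(x,Y,\bar Z)$. Pass to the selector $\varphi^*$; by (iii) and (ii) we then have $\forall x\,\exists! Y\,\varphi^*(x,Y,\bar Z)$. The formula $\exists Y\,(\varphi^*(x,Y,\bar Z)\land m\in Y)$ is $\Sigma^1_2$, so by $\Sigma^1_2$-comprehension the set
\[ S=\{\tuple{x,m}:\exists Y\,(\varphi^*(x,Y,\bar Z)\land m\in Y)\} \]
exists. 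Read $S$ as coding the sequence $\tuple{(S)_x}_{x\in\bbn}$ with $(S)_x=\{m:\tuple{x,m}\in S\}$. Then $m\in(S)_x$ holds exactly when $m$ lies in \emph{the} unique $Y$ with $\varphi^*(x,Y,\bar Z)$, by the uniqueness in (ii); in other words $(S)_x$ equals that unique $Y$. Hence $\varphi^*(x,(S)_x,\bar Z)$, and by (i) also $\varphi(x,(S)_x,\bar Z)$, for every $x$, so the sequence coded by $S$ witnesses the conclusion of the chosen instance of $\Sigma^1_2\textrm{-}\mathsf{AC}$.

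The step I expect to be the main obstacle is the ingredient taken for granted above: the formalization of the Kond\^o(--Addison) uniformization theorem inside $\Pi^1_1\ca$. Here one represents a $\Pi^1_1$ relation by a recursive tree, extracts a $\Pi^1_1$-norm from the Kleene--Brouwer orderings of these trees, and uses this norm (together with a leftmost refinement) to select a distinguished member of each non-empty section --- the delicate point being that the selecting formula must itself stay $\Pi^1_1$ and that only the comprehension and induction available in $\Pi^1_1\ca$ may be used. This is standard but not routine, and the details are in \cite{simpson}. Everything downstream of it --- projecting to the $\Sigma^1_2$ selector, the single application of $\Sigma^1_2$-comprehension, and the extraction of the choice sequence from $S$ --- is bookkeeping with the formula classes.
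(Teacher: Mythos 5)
Your argument is correct, and it is essentially the proof behind the paper's citation: the paper gives no proof of its own here, referring to Simpson's Theorem VII.6.9, whose underlying argument is the same Kond\^o--Addison route — $\Pi^1_1$ uniformization formalized in $\Pi^1_1\ca$, projected to a $\Sigma^1_2$ selector, followed by a single application of $\Sigma^1_2$-comprehension (available in $\Pi^1_2\ca$ by complementing a $\Pi^1_2$-defined set) to assemble the unique witnesses into a sequence. The only part you take on faith, the formalization of Kond\^o's theorem in $\Pi^1_1\ca$, is exactly the part the paper also outsources to \cite{simpson}, so no gap remains relative to the paper's standard of proof.
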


A sequence $\tuple{X_i}_{i \in \bbn}$ can be regarded as a countable model for the language $L_2$, with $\bbn$ as the first-order universe and $\{X_i\}_{i \in \bbn}$ as the second-order universe. Such a model is called \emph{a countable coded model} and typically denoted $M$.
\begin{definition}
A countable coded model $M$ is a \emph{$\beta_2$-model} if for all $\bar x \in \bbn,\bar  X \in M,$ and each $\Pi^1_2$ formula $\varphi(\bar x,\bar X)$,
\[M \models \varphi(\bar x, \bar X) 
\textrm{ iff } \varphi(\bar x,\bar X) \textrm{ is true}.\]
\end{definition}
\noindent (A completely formal version of this definition involves truth definitions for $\Pi^1_2$ sentences and can be found for instance in \cite[Definition VII.7.2]{simpson}). Note that any true $\Pi^1_3$ {sentence} remains true in each $\beta_2$ model.

\begin{theorem}(see \cite[Theorem VII.6.9 and VII.7.4]{simpson})\label{thm:beta}
$\Pi^1_2\ca$ proves:
\[\forall X\, \exists M\, (M \textrm{ is a }\beta_2\textrm{-model and  } X\in M).\]
\end{theorem}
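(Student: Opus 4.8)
The plan is to prove the theorem by directly constructing, for a given set $X$, a countable coded model $M$ with $X \in M$ that is a $\beta_2$-model. The guiding observation is a reformulation of what it means to be a $\beta_2$-model in terms of closure conditions: a countable coded model $M$ of $\aca$ is a $\beta_2$-model if and only if it is a $\beta$-model (correct about $\Sigma^1_1$ formulas, equivalently about well-foundedness of trees) and it is \emph{$\Sigma^1_2$-saturated}, meaning that for every $\Pi^1_1$ formula $\psi(Y,\bar p)$ with parameters $\bar p$ from $M$, if $\exists Y\,\psi(Y,\bar p)$ is true then $M$ already contains some witness $Y$. The only nonroutine part of this equivalence is that a $\beta$-model which is $\Sigma^1_2$-saturated is a $\beta_2$-model: put a $\Sigma^1_2$ formula into normal form $\exists Y\,\psi(Y,\bar X)$ with $\psi \in \Pi^1_1$ and $\bar X \in M$; if it is true, saturation hands us a witness $Y_0 \in M$, and correctness of the $\beta$-model on the $\Pi^1_1$ formula $\psi$ gives $M \models \psi(Y_0, \bar X)$, hence $M \models \exists Y\,\psi(Y,\bar X)$; conversely, if $M \models \exists Y\,\psi(Y,\bar X)$ then $M \models \psi(Y_0,\bar X)$ for some $Y_0 \in M$, and $\beta$-model correctness upgrades this to genuine truth.

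The next step is to realize both closure conditions through operations that $\Pi^1_2\ca$ can iterate. For the $\beta$-model condition I would use the classical fact that a countable coded model of $\aca$ closed under the hyperjump is automatically a $\beta$-model: for a tuple $\bar p$ from $M$ and a tree $T$ recursive in $\bar p$, the genuine hyperjump $\mathcal{O}^{\bar p} \in M$ correctly decides whether $T$ is well-founded, and when $T$ is ill-founded its leftmost path is recursive in $\mathcal{O}^{\bar p}$ and therefore lies in $M$. So it suffices to build $M \ni X$ satisfying $\aca$ and closed under both the hyperjump and the ``$\Sigma^1_2$-witness'' operation. I would do this in $\omega$ stages $M_0 \subseteq M_1 \subseteq \cdots$, with $M_0$ the arithmetic closure of $X$ and $M_{k+1}$ obtained from $M_k$ by adjoining the hyperjump of a single real coding $M_k$, adjoining a sequence of witnesses to all true $\Sigma^1_2$ facts with parameters in $M_k$, and closing under arithmetic comprehension; the witnesses at each stage come from an application of $\Sigma^1_2$ comprehension (to pin down the parameter tuples for which the relevant $\Sigma^1_2$ statement holds) followed by $\Sigma^1_2$ choice, available by Theorem~\ref{thm:choice}. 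With $M = \bigcup_k M_k$, checking that $M$ satisfies $\aca$, is closed under the hyperjump, and is $\Sigma^1_2$-saturated is then routine, because any finitely many parameters from $M$ already appear together in some $M_k$ (here one uses $\Sigma^1_2$ induction, provable in $\Pi^1_2\ca$), so every demand on those parameters is met by stage $k+1$.

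I expect the main obstacle to be carrying out the $\omega$-stage recursion \emph{inside} $\Pi^1_2\ca$ rather than in a visibly stronger theory. Each individual stage consumes an instance of $\Sigma^1_2$ comprehension and of $\Sigma^1_2$ choice, so the passage $M_k \mapsto M_{k+1}$ is neither arithmetical in $M_k$ nor single-valued, and one cannot simply invoke arithmetical recursion; a careless iteration --- say, one forming a full local $\Sigma^1_2$-satisfaction set at each of the $\omega$ stages --- would push well past $\Pi^1_2\ca$. The correct implementation extracts the whole sequence $\langle M_k\rangle$ from a single global use of a dependent-choice principle for $\Sigma^1_2$ relations, which $\Pi^1_2\ca$ also proves; setting up the stagewise demands so that the step relation remains manageable at this level is the delicate point, and it is essentially the content of the development in \cite[\S VII.6]{simpson}, which together with the $\beta$-model machinery of \cite[\S VII.7]{simpson} yields the theorem. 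A last bookkeeping point is that the informal reasoning above must be matched against the formal definition of $\beta_2$-model via a $\Sigma^1_2$-satisfaction predicate; for this one invokes that $\Pi^1_2\ca$ proves such a predicate exists and satisfies the expected Tarski and normal-form conditions.
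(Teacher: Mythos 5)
Your proposal is correct in outline and is essentially the same route as the paper's, which gives no proof of its own but cites \cite[Theorems VII.6.9 and VII.7.4]{simpson}: those results are established exactly by your combination of hyperjump closure (for $\beta$-model correctness) and $\Sigma^1_2$-witness closure along an $\omega$-stage iteration packaged as a dependent choice principle provable in $\Pi^1_2\ca$. The one refinement to make at your ``delicate point'' is that the step relation demanding witnesses for all \emph{true} $\Sigma^1_2$ facts with parameters in $M_k$ is not itself a $\Sigma^1_2$ relation, so plain $\Sigma^1_2$-$\mathsf{DC}$ does not literally apply; what Theorem VII.6.9 actually supplies, and what the construction really uses, is the \emph{strong} form of $\Sigma^1_2$ dependent choice, which builds the ``if a witness exists, the chosen one works'' clause into the scheme.
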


The final principle we have to discuss concerns fixpoints of iterations of certain monotone operators. A \emph{$\Delta^1_2$ monotone operator} is given by a pair of $\Pi^1_2$ formulas $\varphi(x,X), \psi(x,X)$ (possibly with parameters) such that \[\forall x\, \forall X\, (\varphi(x,X) \Leftrightarrow \neg\psi(x,X))\] and \[\forall X_1\, \forall X_2 \, \forall x\, (X_1 \subseteq X_2 \land \varphi(x,X_1)) \Rightarrow \varphi(x,X_2).\] We think of $\varphi$ and $\psi$ as defining an operator $\Gamma_{\varphi,\psi}$ on sets that maps a set $X$ to $\{x \in \bbn: \varphi(x,X)\}$.

\begin{definition}\label{def:fixpoint}
The axiom scheme $\Delta^1_2\textrm{-}\mathsf{MI}$ asserts the following for every $\Delta^1_2$ monotone operator given by formulas $\varphi,\psi$: there exists a prewellordering (reflexive transitive relation connected on its field) $\preccurlyeq$ with field $P$ such that \[\forall x\, [x \in P \Leftrightarrow x \in \Gamma_{\varphi,\psi}( \{y: y \prec x\})] \textrm{ and } \Gamma_{\varphi,\psi}(P) \subseteq P.\] 
\end{definition}
\noindent If $\preccurlyeq, P$ are as in Definition \ref{def:fixpoint}, then $P$ can be regarded as the inductively generated least fixed point of 
$\Gamma_{\varphi,\psi}$.

\begin{theorem}\cite[Theorem 5.1]{medsalem_tanaka_delta_03}\label{thm:fixpoint}
$\Pi^1_2\ca \vdash \Delta^1_2\textrm{-}\mathsf{MI}$.
\end{theorem}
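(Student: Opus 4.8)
The plan is to produce the least fixed point $P$ of $\Gamma := \Gamma_{\varphi,\psi}$, together with the prewellordering recording the stage at which each element enters, as an explicitly $\Sigma^1_2$-definable object --- rather than by running a transfinite recursion up to a closure ordinal, which $\Pi^1_2\ca$ cannot do along an arbitrary well-order. The point exploited throughout is that $\Pi^1_2\ca$ proves $\Sigma^1_2$-comprehension (a set and its complement stand or fall together), so it suffices to define $P$ and the prewellordering by $\Sigma^1_2$ formulas and then verify the two clauses of Definition \ref{def:fixpoint}.

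The first and central step is to set up the Moschovakis stage-comparison relations for $\Gamma$ and argue that they are $\Sigma^1_2$. Informally, for $x,y \in \bbn$ one puts $x \le^* y$ if $x$ enters the inductive construction no later than $y$, and $x <^* y$ if $x$ enters at all and strictly before $y$; then $P = \{x : \exists y\, x \le^* y\}$. Using the prewellordering property of $\Sigma^1_2$ --- with $\Sigma^1_2$-choice (Theorem \ref{thm:choice}) supplying the uniformizations it requires --- one checks simultaneously that $P$, $\le^*$ and $<^*$ are $\Sigma^1_2$, that $\le^*$ restricted to $P$ is a prewellordering, and that $x \in P$ holds precisely when $\varphi(x,\{y : y <^* x\})$. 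It is exactly here that the hypothesis ``$\Gamma$ is $\Delta^1_2$'', rather than merely $\Sigma^1_2$ or $\Pi^1_2$, is indispensable: expressing ``$x$ has not yet been added by the stage at which $y$ is added'' at the $\Sigma^1_2$ level requires the $\Pi^1_2$ presentation $\psi$ of the complement of $\Gamma$, via the rewriting of $z \in \Gamma(B)$ and $z \notin \Gamma(B)$ as $\varphi(z,B)$ and $\psi(z,B)$, both $\Pi^1_2$.

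Granting this, the remainder is bookkeeping that $\Pi^1_2\ca$ handles comfortably. By $\Sigma^1_2$-comprehension it forms $P$ and the relation $\preccurlyeq \,:=\, {\le^*}\!\restriction\!(P \times P)$, which is then a genuine set prewellordering with field $P$. One verifies the two conditions of Definition \ref{def:fixpoint}: $x \in P \Leftrightarrow x \in \Gamma_{\varphi,\psi}(\{y : y \prec x\})$ is immediate from the characterization of $P$ in terms of $<^*$ (with $\{y : y\prec x\}=\emptyset$ and $x \notin \Gamma_{\varphi,\psi}(\emptyset)$ when $x \notin P$), while $\Gamma_{\varphi,\psi}(P) \subseteq P$ and the coherence of the stage assignment follow by transfinite induction along $\preccurlyeq$; since $\preccurlyeq$ is now a set and the relevant formulas are $\Pi^1_2$, this induction is available in $\Pi^1_2\ca$. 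The $\beta_2$-models of Theorem \ref{thm:beta} are needed only where one reasons semantically about the construction --- e.g.\ to know that the $\Pi^1_2$ facts defining $\Gamma$ are absolute, so that a stage comparison computed inside a countable coded $\beta_2$-model agrees with the one computed in the full structure.

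I expect the genuine obstacle to be the $\Sigma^1_2$ bound in the second step: transcribing enough effective descriptive set theory (the prewellordering property of $\Sigma^1_2$, the stage-comparison theorem) into the comparatively weak theory $\Pi^1_2\ca$, and in particular confirming that $\Sigma^1_2$-choice is all the choice one needs and that the $\Delta^1_2$ hypothesis really does keep the stage-comparison relations at the $\Sigma^1_2$ level. Once that is in place, forming $P$ and $\preccurlyeq$ and checking Definition \ref{def:fixpoint} are routine.
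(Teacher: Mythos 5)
You should first note that the paper contains no proof of Theorem \ref{thm:fixpoint} at all: it is imported verbatim from MedSalem--Tanaka (their Theorem 5.1), so there is no in-paper argument to compare yours against, and your sketch has to stand on its own. Taken classically, your route is a sensible one: after replacing the monotone operator by its positivization $x \in \Gamma(X) \Leftrightarrow \exists Y\,(Y \subseteq X \wedge x \in \Gamma(Y))$ (a step you do not mention, but which is needed before any stage-comparison machinery applies, since that machinery is about $S$-\emph{positive} operative formulas, not arbitrary monotone operators), one has a positive $\Sigma^1_2$ operator, $\Sigma^1_2$ is a Spector pointclass, and the Moschovakis-style stage-comparison relations are indeed $\Sigma^1_2$; then $\Sigma^1_2$-comprehension (trivially available in $\Pi^1_2\ca$) and transfinite induction along the resulting set prewellordering finish the verification of Definition \ref{def:fixpoint}, essentially as you say.

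The genuine gap is the one you yourself flag and then defer: proving, \emph{inside} $\Pi^1_2\ca$, that the $\Sigma^1_2$-defined relations $\le^*$, $<^*$ have the properties you need. The classical proof of the stage-comparison theorem defines these relations via the recursion theorem for the pointclass and then verifies the recursion equations and their wellfoundedness by transfinite induction on the ordinal stages $|x|$ of the inductive hierarchy --- that is, it presupposes exactly the object (the hierarchy, equivalently the prewellordering $\preccurlyeq$ with field $P$) whose existence $\Delta^1_2\textrm{-}\mathsf{MI}$ asserts. In $\Pi^1_2\ca$ this is circular as stated, so ``transcribing the stage-comparison theorem'' is not a routine transcription: one must either bootstrap the recursion equations and the wellfoundedness of $<^*$ on $P$ without ever invoking the stages, or replace the route altogether (e.g., witness membership in $P$ by set-sized $\Gamma$-grounded prewellorderings --- note the naive witness definition is $\Sigma^1_3$, not $\Sigma^1_2$ --- and then use $\Sigma^1_2\textrm{-}\mathsf{AC}$, Kondo uniformization and the $\beta_2$-model reflection of Theorem \ref{thm:beta} to pull the complexity down). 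That bootstrapping is precisely the mathematical content of the cited MedSalem--Tanaka proof; your proposal correctly identifies it as ``the genuine obstacle'' but does not carry it out, and the ingredients you name (prewellordering property of $\Sigma^1_2$, $\Sigma^1_2$-choice) do not by themselves close it. The surrounding bookkeeping (forming $P$ and $\preccurlyeq$ by comprehension, $\Pi^1_2$ transfinite induction along a set prewellordering, absoluteness of $\Pi^1_2$ facts in $\beta_2$-models) is fine, but it is the peripheral part of the theorem, not its core.
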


\paragraph{Notational convention.} As above, we will use the letter $\bbn$ to denote the natural numbers as formalized in second-order arithmetic, that is, the domain of the first-order sort. On the other hand, the symbol $\omega$ will stand for the concrete, or standard, natural numbers. For instance, given a theory $\mathrm{T}$ and a formula $\varphi(x)$, ``$\mathrm{T}$ proves $\varphi(n)$ for all $n \in \omega$'' will mean ``$\mathrm{T} \vdash \varphi(0), \mathrm{T} \vdash \varphi(1), \ldots$'', which does not have to imply $\mathrm{T} \vdash \forall x\!\in\!\bbn\, \varphi(x)$''.

The letter $n$ will be used exclusively to denote elements of $\omega$. As formal variables of the first-order sort, we will typically use $x, y, z,\ldots$, but sometimes also 
$i,j,k, \ell$ or other lowercase letters different from $n$.


\subsection{Automata and MSO}

The study of monadic second order logic on the infinite binary tree relies heavily on the theory of automata on infinite words and infinite trees. Our presentation of automata theory and MSO is based on \cite{thomas}. 

Given a finite set $\Sigma$ (the so-called \emph{alphabet}), a \emph{word over alphabet $\Sigma$} is simply a mapping $f \colon \bbn \to \Sigma$. A \emph{tree} (or more precisely, \emph{labelled binary tree}) \emph{over $\Sigma$} is a mapping $T \colon \{0,1\}^* \to \Sigma$, where $\{0,1\}^*$ stands for the set of all finite binary strings\footnote{In automata theory, a more typical choice of symbols would be $w$ and $t$ instead of, respectively, $f$ and $T$. However, in this paper we decided to use capital letters to emphasize that these are objects of the second-order sort.}. Note that words and trees over $\Sigma$ have another natural representation as structures with universe $\bbn$ resp.\ $\{0,1\}^*$ and $\mathrm{card}(\Sigma)$ disjoint unary relations whose union is the universe.

\begin{definition}[Nondeterministic Parity Word Automaton]
A \emph{nondeterministic parity word automaton} over a finite alphabet $\Sigma$ is a tuple $\mathcal{A}\!=\!\langle \Sigma,Q, q^I, \Delta, \mathrm{rk}\rangle$ where $Q$ is a finite \emph{set of states}, $q^I\!\in\!Q$ is the \emph{initial state}, $\Delta\subseteq Q\times \Sigma \times Q$ is the \emph{transition relation}, and $\mathrm{rk}\colon Q\to\omega$ is the \emph{rank function}.
\end{definition}
We use the letter $\delta$ to denote individual \emph{transitions} of $\Aa$, i.e.\ elements of $\Delta$.

A \emph{run} of a nondeterministic parity word automaton $\Aa$ on a word $f$ over $\Sigma$ is a labelling $\rho\colon\bbn\to\Delta$ which is  consistent, that is for every position $i\in\bbn$ in $f$, if $\rho(i) = (q_i,a_i,q_{i+1})$, then
\begin{enumerate}
\item $f(i) = a_i$,
\item $\rho(i+1) =  (q_{i+1},a_{i+1},q_{i+2})$,
\end{enumerate}
and $q_0=q^I$, that is the first state in the run is the initial state of the automaton. Intuitively, the transitions in a run have to be such that after moving from a position $i$ to the successor position $i+1$, the run continues from the state $q_{i+1}$ indicated by the transition $\rho(i)$.

A run $\rho$ on a word $f$ is {\em accepting} if the states $q_0,q_1,\ldots$  along this run satisfy \emph{the parity acceptance condition}: \[\liminf_{i \in \bbn} \mathrm{rk}(q_i) \textrm{ is even.}\] 

A word $f$ is {\em accepted} by the automaton $\Aa$ if there exists an accepting run of ${\mathcal A}$ on $f$. 

\begin{definition}[Nondeterministic Parity Tree Automaton]
A \emph{nondeterministic parity tree automaton} over a finite alphabet $\Sigma$ is a tuple $\mathcal{A}\!=\!\langle \Sigma,Q, q^I, \Delta, \mathrm{rk}\rangle$ where $Q$ is a finite \emph{set of states}, $q^I\!\in\!Q$ is the \emph{initial state}, $\Delta\subseteq Q\times \Sigma \times Q\times Q$ is the \emph{transition relation}, and $\mathrm{rk}\colon Q\to\bbn$ is the \emph{rank function}.
\end{definition}
If a transition $\Delta \ni \delta$ has the form $(q,a,q_0,q_1)$, we may write $q_{0} = \delta(a,q,0)$ and $q_{1} = \delta(a,q,1)$. The idea is that if the automaton is in some vertex of the tree and reads the letter $a$ while in state $q$, it may use $\delta$ to move simultaneously to the left son of the vertex in state $q_0$ and to the right son in state $q_1$.

A \emph{run} of a nondeterministic parity tree automaton $\Aa$ on a tree $T$ over $\Sigma$ is a labelling $\rho \colon \{0,1\}^* \to \Delta$ which is consistent, that is for every vertex $v\in \{0,1\}^*$ if $\rho(v) = (q_v,a_v,q_{v0},q_{v1})$, then 
\begin{enumerate}
\item $t(v) = a_v$,
\item $\rho(vi) =  (q_{vi},a_{vi},q_{vi0},q_{vi1})$ for $i=0,1$,
\end{enumerate}
and $q_\epsilon = q^I$ where $\epsilon$ denotes the empty string. 
Intuitively, the transitions in a run have to be chosen so that after moving from a vertex $v$ in the tree to its sons $v0, v1$, the run continues from the states $q_{v0}, q_{v1}$ indicated by the transition $\rho(v)$.

A run $\rho$ on a tree $T$ is {\em accepting} if for every branch $\pi \in \{0,1\}^\bbn$, the states along this branch,  $q_{\pi{\upharpoonright_0}}, q_{\pi{\upharpoonright_1}}, \ldots$  satisfy \emph{the parity acceptance condition}: 
\[\liminf_{i \in \bbn} \mathrm{rk}(q_{\pi{\upharpoonright_i}}) \textrm{ is even.}\]

A tree $T$ is {\em accepted} by the automaton $\Aa$ if there exists an accepting run of ${\mathcal A}$ on $T$. 

\begin{definition}[Deterministic automaton] We call a nondeterministic parity word automaton $\mathcal{A}\!=\!\langle \Sigma,Q, q^I, \Delta, \mathrm{rk}\rangle$ a {\em deterministic word automaton} if the transition relation $\Delta$ is a graph of a function $Q\times \Sigma\to Q$. Similarly, we call a nondeterministic parity tree automaton $\mathcal{A}\!=\!\langle \Sigma,Q, q^I, \Delta, \mathrm{rk}\rangle$ a {\em deterministic tree automaton} if the transition relation $\delta$ is a graph of a function $Q\times \Sigma\to Q\times Q$. 
\end{definition}
Note that a deterministic word (resp.\ tree) automaton has exactly one run on each word (resp.\ tree).

\medskip

We consider monadic second order logic MSO over the structure $(\{0,1\}^*,S_0,S_1)$, where $S_0$ and $S_1$ are the left and right two successor relations, respectively ($S_0(v,w)$ holds iff $w = v0$ and $S_1(v,w)$ holds iff $w = v1$). The language of MSO over $(\{0,1\}^*,S_0,S_1)$ contains first-order variables $x,y,\ldots$ ranging over elements of $\{0,1\}^*$ and second-order variables $X,Y,\ldots$ ranging over subsets of $\{0,1\}^*$. Atomic formulas have the form $x=y$, $S_0(x,y)$, $S_1(x,y)$ and $x \in X$. The language of MSO has the usual boolean connectives and the quantifiers $\exists x, \exists X$. In the language with just the two successors, as opposed to the arithmetical language with $+$ and $\cdot$, there is no way to define a pairing function in MSO, so the restriction to \emph{unary} second-order quantifiers is very important.

\subsection{Games}

We will be concerned with games in which winning conditions take the form of boolean combinations of $\Sigma^0_2$ properties. To talk about arbitrary boolean combinations of $\Sigma^0_2$ statements, we formalize a version of the so-called \emph{difference hierarchy} over $\Sigma^0_2$.

Let $f$ be a distinguished second-order variable which is assumed to represent (the graph of) a function from $\bbn$ to $\bbn$. 
A \emph{$(\Sigma^0_2)_x$ formula} $\varphi(f)$ is given by a number $x$ and a $\Pi^0_2$ formula $\psi(y,f)$, possibly with other parameters, such that for all $f$, $\psi(x,f)$ always holds, and for all $z < y < x$, if $\psi(z,f)$, then $\psi(y,f)$. We say that \emph{$\varphi(f)$ holds} if
\[\psi(0,f) \vee \psi(2,f) \vee \ldots \vee \psi(2\lfloor x/2 \rfloor, f);\]
or formally, if the smallest $y \le x$ such that $y = x \vee \psi(y,f)$ is even. 

Note that in $\aca$ a $(\Sigma^0_2)_1$ formula is the same thing as a $\Pi^0_2$ formula, in the sense that for every $(\Sigma^0_2)_1$ formula
$\varphi(f)$ there is a $\Pi^0_2$ formula $\xi(f)$ such that
$\aca \vdash \forall f\,(\varphi(f) \Leftrightarrow \xi(f))$, and vice versa. Similarly, a $(\Sigma^0_2)_{x+1}$
formula is the same thing as the disjunction of a $\Pi^0_2$ formula and a negated $(\Sigma^0_2)_x$ formula. So, our class $(\Sigma^0_2)_x$ is dual to the usual $x$-th level of the difference hierarchy over $\Sigma^0_2$. It is a matter of routine if tedious verification that every concrete boolean combination of $\Sigma^0_2$ properties can be expressed by a $(\Sigma^0_2)_n$ formula for some $n \in \omega$.

A \emph{$(\mathbf{\Sigma^0_2})_x$ Gale-Stewart game} (briefly,
a \emph{$(\mathbf{\Sigma^0_2})_x$ game}) is given by a $(\Sigma^0_2)_x$ formula $\varphi(f)$, again, possibly with other parameters (in accordance with the conventions from descriptive set theory, the boldface font serves precisely to indicate the possible presence of parameters). The game is played by two players, 0 and 1, who alternately choose natural numbers $f(0), f(1), \ldots $, building an infinite sequence $f \in \bbn^\bbn$. Player 0 wins the game if $\varphi(f)$ holds, and player 1 wins otherwise. The notions of a \emph{strategy} and \emph{winning strategy}  for each player are defined as usual. The game given by $\varphi(f)$ is \emph{determined} if one of the players has a winning strategy. For precise definitions, see e.g.\ \cite{kechris}.

\begin{definition}
$\sigmadet{x}$ is the $\Pi^1_3$ sentence ``all $(\mathbf{\Sigma^0_2})_x$ games are determined''.

$\sigmadet{x}^*$ is the same statement restricted to games in the Cantor space $\{0,1\}^\bbn$ instead of the Baire space $\bbn^\bbn$, that is to games where the players are required to choose only numbers from $\{0,1\}$ in each move. 
\end{definition}

Obviously, $\sigmadet{x}$ implies $\sigmadet{x}^*$. On the other hand, it easily follows from \cite[Lemma 4.2]{nemoto_medsalem_tanaka} that, provably in $\aca$, $\sigmadet{x+1}^*$ implies $\sigmadet{x}$. So, we have:
\begin{proposition}\label{prop:bairecantor}
$\ouraxiom$ and $\ouraxiomcantor$ are equivalent in $\aca$. 
\end{proposition}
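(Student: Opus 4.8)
The plan is to prove the two implications of the equivalence separately, with the cited transfer lemma between the Cantor and Baire spaces doing the real work in one direction.

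The direction $\ouraxiom \Rightarrow \ouraxiomcantor$ is elementary. Fix $x$ and let $h$ be the recursive ``clamping'' map with $h(f)(i) = \min(f(i),1)$. Given a $(\Sigma^0_2)_x$ formula $\varphi(f)$ defining a Cantor-space game $G^*$, the formula obtained from $\varphi$ by composing its function argument with $h$ is again a legitimate $(\Sigma^0_2)_x$ formula --- precomposing the underlying $\Pi^0_2$ formula with $h$ keeps it $\Pi^0_2$ and preserves the monotonicity and boundary conditions in the definition of a $(\Sigma^0_2)_x$ formula --- so it defines a Baire-space game $G$. A winning strategy for a player in $G$ yields, by applying $h$ move-by-move, a winning strategy for the same player in $G^*$, and all of this is easily formalized in $\aca$. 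Hence $\aca$ proves $\sigmadet{x} \to \sigmadet{x}^*$ uniformly in $x$, and quantifying over $x$ gives the implication.

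For the converse I would invoke the standard encoding of a Baire-space play by a Cantor-space play: a move $m \in \bbn$ in a game $G$ given by a $(\Sigma^0_2)_x$ formula $\varphi$ is spelled out by the block $0^m1$, so that one move of $G$ corresponds to finitely many moves of a Cantor-space game $G^*$. The winning condition $\varphi^*$ of $G^*$ is the pullback of $\varphi$ along the decoding map, together with a low-level arithmetical clause declaring a player who never completes the current block (plays $0$ forever) to be the loser; after the routine difference-hierarchy bookkeeping this makes $\varphi^*$ a $(\Sigma^0_2)_{x+1}$ formula, and a winning strategy for a player in $G^*$ transforms --- by an $\aca$-definable, essentially continuous re-coding --- into a winning strategy for the same player in $G$, since a winning $G^*$-strategy facing a non-cheating opponent never cheats either, so the resulting $G^*$-play decodes to a genuine $G$-play won by that player. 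This is precisely the content of \cite[Lemma 4.2]{nemoto_medsalem_tanaka}, which yields $\aca \vdash \sigmadet{x+1}^* \to \sigmadet{x}$. Now, assuming $\ouraxiomcantor$ and fixing an arbitrary $x$, we have $\sigmadet{x+1}^*$, hence $\sigmadet{x}$; as $x$ was arbitrary, $\ouraxiom$ follows. This last step merely instantiates a uniform-in-$x$ implication --- no induction on $x$ is involved --- so $\aca$ is comfortably enough.

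I expect the only real obstacle to be bookkeeping rather than mathematics: verifying that $\varphi \mapsto \varphi^*$ and the attendant strategy transfer are given by fixed arithmetical recipes (so that $\aca$ suffices and no stronger comprehension creeps in), and exhibiting $\varphi^*$ explicitly as a $(\Sigma^0_2)_{x+1}$ formula --- i.e.\ producing the corresponding $\Pi^0_2$ formula together with its monotonicity and boundary conditions. Both are routine, and the latter is exactly what \cite[Lemma 4.2]{nemoto_medsalem_tanaka} packages, which is why the whole deduction can fairly be called easy.
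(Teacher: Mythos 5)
Your proof is correct and essentially identical to the paper's: the Baire-to-Cantor direction is the ``obvious'' one, and the converse is exactly the paper's appeal to \cite[Lemma 4.2]{nemoto_medsalem_tanaka}, i.e.\ $\aca \vdash \sigmadet{x+1}^* \Rightarrow \sigmadet{x}$, followed by instantiating at an arbitrary $x$ (which is precisely why the equivalence is stated for the universally quantified principles rather than level by level). One small point of bookkeeping in the easy direction: the Cantor-space strategy should be obtained by internally simulating the Baire-space play (recording the unclamped outputs of the given strategy $\sigma$ and playing their images under $h$), rather than literally playing $h(\sigma(\cdot))$ on the Cantor-space history, since the latter can produce plays that are not $h$-images of $\sigma$-consistent plays; with that standard fix your argument is complete.
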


A \emph{parity game of index $(0,x)$} is a tuple $G = \tuple{V_\exists, V_\forall, v_I, E, \mathrm{rk}}$, where: $V_\exists$ and $V_\forall$ are disjoint sets with union $V:= V_\exists \cup V_\forall$; $v_I$ is an element of $V$; $E \subseteq V^2$ is such that for every $v \in V$ there is some $w$ with $(v,w) \in E$; and the \emph{rank function} $\mathrm{rk}$ is a function from $V$ to 
$\{0,\ldots,x\}$. The game is played on the \emph{arena} $V$ by the two players $\exists$ and $\forall$. The game starts in the \emph{initial position} $v_I$; and if it reaches position $v \in V_p$, $p \in \{\exists,\forall\}$, then player $p$ moves to some $w$ such that $(v,w) \in E$. Formally, a \emph{play} of $G$ is a sequence $\tuple{v_i}_{i \in \bbn}$ such that $v_0 = v_I$ and $(v_i, v_{i+1}) \in E$ for all $i$. Player $\exists$ wins the play exactly if
\[\liminf_{i \in \bbn}\mathrm{rk}(v_i) \textrm{ is even}.\]

$\sigmadet{x}$ implies that all parity games of index $(0,x)$ are determined. However, we also need a stronger notion of determinacy. A \emph{positional strategy} (also known as a \emph{memoryless}, or \emph{forgetful}, strategy) for $\exists$ (resp. $\forall$) is a function $\sigma$ from $V_\exists$ (resp.\ $V_\forall$) into $V$ such that for all $v$ in the domain, $(v,\sigma(v))\in E$. A positional strategy $\sigma$ is \emph{winning} if the player using $\sigma$ wins every play consistent with $\sigma$. The game $G$ is \emph{positionally determined} if one of the two players has a positional winning strategy. Basic notions related to parity games come originally from \cite{emerson_jutla}.

\begin{definition}[Tree-like parity games]
A parity game $G$ is \emph{tree-like} if $V = \{0,1\}^{<\bbn}$, $V_\exists = \bigcup_{i \in \bbn} \bbn^{2i}$, and $(v,w)\in E$ iff
$w = v0$ or $w = v1$. 
\end{definition}
Our notion of $(\Sigma^0_2)_x$ formula was chosen to make the proof of the following lemma rather straightforward.

\begin{lemma}\label{lem:dwakropkacztery}
$\aca$ proves that for every $x$, $\sigmadet{x}^*$ holds  if and only if all tree-like parity games of index $(0,x)$ are positionally determined.
\end{lemma}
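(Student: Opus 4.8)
The plan is to exploit the fact that a tree-like parity game of index $(0,x)$ is, up to a trivial renaming, the very same object as a $(\mathbf{\Sigma^0_2})_x$ Gale--Stewart game on Cantor space, and that on a tree arena there is essentially no difference between an arbitrary strategy, a positional strategy, and a strategy in the sense of Gale--Stewart games: in all three cases the object in question is a function of the current node, because in $\{0,1\}^{<\bbn}$ every node is reached by a unique consistent play. A play of a tree-like parity game is just a branch $\pi \in \{0,1\}^\bbn$ with $\exists$ choosing the bits at even positions and $\forall$ those at odd positions, exactly as in a Cantor game of player $0$ against player $1$; and a positional strategy for $\exists$ (resp.\ $\forall$) is precisely a function from the even-length (resp.\ odd-length) binary strings to $\{0,1\}$, i.e.\ a strategy in the associated Cantor game. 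Hence, working in $\aca$, it suffices to prove that the winning conditions definable in the two formalisms coincide, namely: (a) for every rank function $\mathrm{rk}\colon \{0,1\}^{<\bbn} \to \{0,\dots,x\}$ there is a $(\Sigma^0_2)_x$ formula $\varphi(f)$ with parameter $\mathrm{rk}$ such that $\aca \vdash \forall \pi\,[\varphi(\pi) \leftrightarrow \liminf_i \mathrm{rk}(\pi \upharpoonright i)\text{ is even}]$; and (b) conversely, for every $(\Sigma^0_2)_x$ formula $\varphi(f)$ there is such a rank function. Given (a) and (b), both implications of the lemma follow at once by transporting winning strategies across the renaming.

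For (a) I would simply take $\psi(y,\pi)$ to be the $\Pi^0_2$ formula $\forall a\,\exists b\ge a\; \mathrm{rk}(\pi\upharpoonright b)\le y$. Since $\mathrm{rk}\le x$ always, $\psi(x,\pi)$ holds for every $\pi$; and $z<y$ together with $\psi(z,\pi)$ plainly gives $\psi(y,\pi)$, so $\psi$ meets the monotonicity requirement in the definition of a $(\Sigma^0_2)_x$ formula. Thus $\varphi$, built from $x$ and $\psi$, is a legitimate $(\Sigma^0_2)_x$ formula, and $\varphi(\pi)$ holds iff the least $y\le x$ with $\psi(y,\pi)$ is even — which exists since $\psi(x,\cdot)$ is total. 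But $\psi(y,\pi)$ says exactly that $\mathrm{rk}$ takes some value $\le y$ infinitely often along $\pi$, so that least $y$ is $\liminf_i \mathrm{rk}(\pi\upharpoonright i)$; this equivalence is elementary and needs nothing beyond $\aca$.

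Part (b) is the only step with any content, and the chosen normal form for $(\Sigma^0_2)_x$ formulas is what makes it manageable. Fix $\varphi$ given by $x$ and $\psi(y,f)$ and put each $\psi(y,\cdot)$ in normal form $\forall a\,\exists b\,\theta(a,b,y,f)$ with $\theta$ bounded, monotone in $b$, and depending only on $f\upharpoonright b$. For $i\ge 1$ I would let $g_y(i)$ be the largest $a\le i$ such that for every $a'<a$ there is $b\le i$ with $\theta(a',b,y,f\upharpoonright i)$ (taking $g_x(i):=i$, which is harmless since $\psi(x,\cdot)$ is total), and set $\tilde g_y(i):=\max_{w\le y} g_w(i)$; these are arithmetical in the parameters, so $\aca$ forms them. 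Using only bounded reasoning and arithmetical induction one checks that each $\tilde g_y(\cdot)$ is nondecreasing, that $\tilde g_z\le \tilde g_y$ pointwise when $z\le y$, and that $\sup_i \tilde g_y(i)=\infty$ iff $\psi(y,\pi)$ holds (the last equivalence using the $y$-monotonicity of $\psi$). Then I define $\mathrm{rk}(\pi\upharpoonright i)$ to be the least $y\le x$ with $\tilde g_y(i)>\tilde g_y(i-1)$ — which exists because $\tilde g_x$ strictly increases at every step — and $\mathrm{rk}(\epsilon):=x$. Writing $y_0(\pi)$ for the least $y$ with $\psi(y,\pi)$, equivalently the least $y$ with $\sup_i\tilde g_y(i)=\infty$ (so that $\{y:\tilde g_y\text{ bounded}\}=\{0,\dots,y_0(\pi)-1\}$ by $y$-monotonicity), I would argue: each $\tilde g_y$ with $y<y_0(\pi)$ is bounded, hence strictly increases only finitely often, and there are at most $x$ such $y$, so eventually $\mathrm{rk}(\pi\upharpoonright i)\ge y_0(\pi)$; while $\tilde g_{y_0(\pi)}$ is unbounded, so strictly increases infinitely often, and at each such sufficiently late step the least witnessing index is exactly $y_0(\pi)$, so $\mathrm{rk}(\pi\upharpoonright i)=y_0(\pi)$ infinitely often. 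Hence $\liminf_i \mathrm{rk}(\pi\upharpoonright i)=y_0(\pi)$, which is even iff $\varphi(\pi)$ holds, as required. I expect this bookkeeping — defining $g_y,\tilde g_y$ correctly and verifying their limiting behaviour within the weak induction available — to be the main obstacle, though it is routine.

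With (a) and (b) available the lemma is immediate. If all tree-like parity games of index $(0,x)$ are positionally determined, then given a $(\mathbf{\Sigma^0_2})_x$ Cantor game I use (b) to build the corresponding tree-like parity game, take the positional winning strategy it admits, and read it off as a winning strategy for the appropriate player in the Cantor game (plays and move-ownership matching by construction), so $\sigmadet{x}^*$ holds. Conversely, assuming $\sigmadet{x}^*$, a tree-like parity game $G$ of index $(0,x)$ has by (a) a winning condition expressed by a $(\Sigma^0_2)_x$ formula, so the associated Cantor game is determined; its winning strategy is a function from finite binary strings to $\{0,1\}$, which is literally a positional strategy for the corresponding player in $G$, so $G$ is positionally determined. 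Everything here is uniform in $x$, so the whole argument takes place inside $\aca$.
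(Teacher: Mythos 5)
Your proof is correct and follows essentially the same route as the paper: the easy direction uses the same $\Pi^0_2$ condition ``some rank $\le y$ appears infinitely often'', and the converse uses the same normal form for $\Pi^0_2$ together with a rank function that records progress toward verifying $\psi(y,f)$ so that $\liminf_i \mathrm{rk}(\pi\upharpoonright i)$ equals the least $y$ with $y=x \vee \psi(y,\pi)$. The only difference is cosmetic bookkeeping (your explicit counters $g_y,\tilde g_y$ versus the paper's count of past occurrences of rank $y$), both comfortably within $\aca$.
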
 
\begin{proof}
Clearly, a tree-like parity game is determined iff it is positionally determined. Moreover, such a game of index $(0,x)$ can be thought of as a $(\mathbf{\Sigma^0_2})_x$ Gale-Stewart game in the Cantor space given by $x$ and the formula 
\[\psi(y,f):= \eb{z}{y}\forall i \, \exists j\!>\!i\,(\mathrm{rk}(f{\upharpoonright_j}) = z).\]
Note that $\psi$ is $\Pi^0_2$ in $\aca$. This proves the left-to-right direction.

In the other direction, consider a $(\mathbf{\Sigma^0_2})_x$ Gale-Stewart game given by the $\Pi^0_2$ formula $\psi(y,f)$. By a normal form result for $\Pi^0_2$ analogous to the normal form for $\Sigma^0_1$ presented as \cite[Theorem II.2.7]{simpson}, we may assume that $\psi(y,f)$ has the shape $\forall z\, \exists u\, \delta(y,z,u,f{\upharpoonright_u})$ with all quantifiers in $\delta$ bounded by $u$. Define the tree-like parity game of index $(0,x)$ by letting $\mathrm{rk}(f{\upharpoonright_i})$ equal the smallest such $y <  x$ that for some $j$, 
\[\as{z}{j}\eb{u}{i}\delta(y,z,u,f{\upharpoonright_u})\]
holds, but the number of $k < i$ with $\mathrm{rk}(f{\upharpoonright_i}) = y$ is strictly smaller than $j$. If there is no such $y$, let $\mathrm{rk}(f{\upharpoonright_i})$ equal $x$. It is easy to verify in $\aca$ that for every $f \in \{0,1\}^\bbn$, $\liminf_i\mathrm{rk}(f{\upharpoonright_i})$ is exactly the smallest $y$ such that
$y = x \vee \psi(y,f)$. Thus, determinacy of the parity game implies  determinacy of the game given by $x$ and $\psi$. 
\end{proof}

\section{Complementation}\label{sec:complementation}

In this section, we study the logical strength of the complementation theorem for tree automata. We prove that the theorem is equivalent to a determinacy principle intermediate between $\ouraxiom$ and the positional determinacy of all parity games. The somewhat technical definition of the class of games appearing in the principle is abstracted from the so-called Automaton-Pathfinder games appearing quite naturally in the proofs of Rabin's theorem (see e.g.~\cite{thomas}).

The proof of our characterization works in $\aca$ and, unlike the proofs in later sections of this paper, does not rely on any earlier results related to the logical strength of determinacy principles.

\begin{definition}
A parity game $G$ is \emph{almost tree-like} if $V = \{0,1\}^{*} \times \{0,\ldots,k\}$ for some $k \in \bbn$, $V_\exists = \bigcup_{i \in \bbn} \bbn^{2i} \times \{0,\ldots,k\}$, and $((v,i),(w,j))\in E$ implies $(w = v0 \lor w = v1)$. 
\end{definition}

\begin{theorem}\label{thm:complementation} 
The following are equivalent over $\aca$: 
\begin{itemize}
\item[$(1)$] all almost tree-like parity games are positionally determined,
\item[$(2)$] for every tree automaton $\Aa$ there exists a tree automaton $\Bb$ such that for any tree $T$, $\Bb$ accepts $T$ iff $\Aa$ does not accept $T$.
\end{itemize}
\end{theorem}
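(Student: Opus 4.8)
The plan is to prove the two implications separately, working entirely in $\aca$ and using the tight correspondence between runs of tree automata and strategies in Automaton-Pathfinder games.

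\textbf{From $(1)$ to $(2)$.} Given a nondeterministic parity tree automaton $\Aa = \tuple{\Sigma, Q, q^I, \Delta, \mathrm{rk}}$, I would first recall the classical Automaton-Pathfinder game $G_{\Aa}(T)$ associated with a tree $T$: Automaton ($\exists$) tries to build an accepting run by choosing transitions, and Pathfinder ($\forall$) selects a branch by choosing directions $0$ or $1$; $\exists$ wins a play iff the $\liminf$ of the ranks along the chosen branch is even. One checks in $\aca$ that $\Aa$ accepts $T$ iff $\exists$ wins $G_{\Aa}(T)$ --- the nontrivial direction being that a winning strategy for $\exists$ can be ``glued together'' over all branches into a single run, which is where positionality is essential: a positional winning strategy for Automaton depends only on the current tree vertex and state, hence yields a well-defined consistent labelling $\rho\colon\{0,1\}^*\to\Delta$ that is accepting on every branch. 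The point of the definition of \emph{almost tree-like} is exactly that $G_{\Aa}(T)$, after folding the finite state component $Q$ into the second coordinate $\{0,\dots,k\}$ with $k = \mathrm{card}(Q)-1$, is an almost tree-like parity game (the arena is $\{0,1\}^*\times\{0,\dots,k\}$, $\exists$-positions are those at even depth, and every edge moves from a vertex $v$ to $v0$ or $v1$). Now, to build the complement automaton $\Bb$, I would let $\Bb$ guess, at each vertex, a \emph{positional strategy for Pathfinder} in the relevant game; more precisely $\Bb$ is a tree automaton that accepts $T$ iff $\forall$ has a positional winning strategy in $G_{\Aa}(T)$. Such a strategy is a finite object (a function on the finite set $Q\times\Sigma$ or similar), so $\Bb$ can guess it nondeterministically at the root, propagate it, and use its own parity condition to verify that every $\exists$-play consistent with it is losing for $\exists$, i.e.\ winning for $\forall$. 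By positional determinacy of $G_{\Aa}(T)$ exactly one of the two players wins, so $\Bb$ accepts $T$ iff $\Aa$ does not. The routine but somewhat delicate part is checking that ``verify the positional Pathfinder strategy beats all Automaton plays'' can be encoded by a parity acceptance condition on $\Bb$ of bounded index, and that all of this goes through in $\aca$ with only arithmetical comprehension.

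\textbf{From $(2)$ to $(1)$.} Conversely, suppose complementation holds, and let $G$ be an almost tree-like parity game with arena $\{0,1\}^*\times\{0,\dots,k\}$. The idea is to read off from $G$ a tree automaton $\Aa_G$ over a suitable alphabet (encoding the edge relation $E$ and rank function $\mathrm{rk}$, which are subsets of a fixed finite set, so a tree over a finite alphabet $\Sigma_G$ can code an arbitrary such $G$) such that $\Aa_G$ accepts the code of $G$ iff $\exists$ has a positional winning strategy in $G$, and similarly a tree automaton $\Aa_G'$ accepting the code of $G$ iff $\forall$ has a positional winning strategy. Applying $(2)$ to $\Aa_G$ yields a complement automaton $\Bb$; one then argues that $\Bb$ accepts the code of $G$ iff $\exists$ has \emph{no} positional winning strategy. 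Combining this with a direct argument that, for almost tree-like games, non-existence of a positional winning strategy for $\exists$ implies existence of one for $\forall$ --- this is the place where one unwinds, as in the proof of Lemma~\ref{lem:dwakropkacztery}, that an almost tree-like game is determined iff positionally determined, using the boolean-combination-of-$\Pi^0_2$ shape of the parity condition --- gives positional determinacy of $G$. Alternatively, and perhaps more cleanly, one uses complementation to turn the $\Pi^1_2$-looking statement ``$\exists$ has no positional winning strategy'' into the arithmetical statement ``$\Bb$ accepts the code'', and then extracts $\forall$'s positional strategy by a direct combinatorial attractor-style argument inside $\aca$.

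\textbf{Main obstacle.} I expect the principal difficulty to lie in the direction $(1)\Rightarrow(2)$, specifically in engineering the complement automaton $\Bb$ so that (a) it only needs to guess finitely much information (the positional Pathfinder strategy plus enough bookkeeping), (b) its acceptance condition is genuinely a parity condition on a finite rank set, and (c) the equivalence ``$\Bb$ accepts $T$ $\Leftrightarrow$ $\forall$ wins $G_{\Aa}(T)$ positionally $\Leftrightarrow$ $\Aa$ rejects $T$'' is provable using only arithmetical comprehension --- in particular without invoking any determinacy beyond what hypothesis $(1)$ literally provides, and without $\Sigma^0_2$-induction beyond what $\aca$ gives. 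Getting the indices and the arithmetical bookkeeping exactly right, rather than appealing to the usual ``folklore'' constructions that are carried out in a much stronger metatheory, is the real work.
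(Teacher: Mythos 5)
There are two genuine gaps. In your direction $(1)\Rightarrow(2)$, the claim that a positional winning strategy for Pathfinder is ``a finite object (a function on the finite set $Q\times\Sigma$ or similar)'' that $\Bb$ can guess at the root is wrong: the arena of the Automaton--Pathfinder game is $\{0,1\}^{*}\times Q$ (resp.\ $\{0,1\}^{*}\times\Delta$), so a \emph{positional} strategy for Pathfinder still depends on the tree vertex and is an infinite object --- a labelling $S\colon\{0,1\}^{*}\to\Ss$ with $\Ss$ the finite set of maps $\Delta_\Aa\to\{0,1\}$. Consequently $\Bb$ must guess $S(w)$ locally at each vertex, and the real work is then to \emph{verify} that the guessed $S$ is winning: along every branch, \emph{every} sequence of Automaton transitions consistent with $S$ must violate the parity condition. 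This is a universality condition on infinite words, and a tree automaton can only check one run per branch; the paper handles it by building a word automaton for the ``bad'' sequences and invoking McNaughton's determinization (Safra construction), verified in $\aca$, to obtain a deterministic word automaton $\Aa_2$ that can be run branchwise. Your sketch simply says $\Bb$ will ``use its own parity condition to verify'' this, but without the determinization step there is no such parity condition of bounded index, and this is precisely the technical core of the implication that your plan omits.

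In the direction $(2)\Rightarrow(1)$ the decisive step is also missing. Your first route, ``non-existence of a positional winning strategy for $\exists$ implies existence of one for $\forall$'', is exactly positional determinacy of almost tree-like games, i.e.\ the statement $(1)$ being proved; Lemma \ref{lem:dwakropkacztery} only converts positional determinacy into determinacy of $(\mathbf{\Sigma^0_2})_x$ games and gives no way to establish either in $\aca$. Your alternative route, extracting $\forall$'s strategy ``by a direct combinatorial attractor-style argument inside $\aca$'', cannot work either: such attractor/rank-induction arguments need comprehension far beyond arithmetical (compare Lemma \ref{lem:parity-induction}, which uses $\Pi^1_2\ca$), and indeed $(1)$ implies $\ouraxiom$, which is not provable even in $\Delta^1_3\ca$; also note that acceptance by a nondeterministic tree automaton is $\Sigma^1_1$, not arithmetical. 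The paper's argument is structurally different: it uses $(2)$ to build, for each index, an automaton $\Aa_{x,k}$ recognizing the codes of games in which \emph{neither} player has a positional winning strategy (via closure of automata under the MSO connectives occurring in $W^\exists_{0,x,k}$ and $W^\forall_{0,x,k}$), and then derives a contradiction from the assumption that $\Aa_{x,k}$ accepts something, by Lemma \ref{lem:regular-tree} (a nonempty automaton accepts a regular tree) and Lemma \ref{lem:regular-determined} (a game coded by a regular tree reduces to a parity game on a finite arena, whose positional determinacy is available well within $\aca$). Without this regular-tree reduction, or some substitute for it, your proposal does not close the implication.
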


\noindent
From Theorem \ref{thm:complementation} and Lemma \ref{lem:dwakropkacztery} we immediately get:

\begin{corollary}\label{cor:complementation} 
$\aca$ proves the implications $(1) \Rightarrow (2)$ and $(2) \Rightarrow (3)$ for:
\begin{itemize}
\item[$(1)$] all parity games are positionally determined,
\item[$(2)$] for every tree automaton $\Aa$ there exists a tree automaton $\Bb$ such that for any tree $T$, $\Bb$ accepts $T$ iff $\Aa$ does not accept $T$,
\item[$(3)$] $\ouraxiom$, or equivalently, all tree-like parity games are positionally determined.
\end{itemize}
\end{corollary}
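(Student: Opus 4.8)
The plan is to prove the two implications separately, both over $\aca$ and without invoking any external result about the strength of determinacy. \emph{From $(1)$ to $(2)$.} I would follow, in a form formalizable in $\aca$, the classical Gurevich--Harrington proof of complementation. Given a tree automaton $\Aa$, associate to every tree $T$ the Automaton--Pathfinder acceptance game on $T$, in which Automaton tries to build an accepting run of $\Aa$ on $T$ while Pathfinder chases a branch witnessing rejection. The first task is to present this game as an \emph{almost tree-like} parity game: its arena is a copy of $\{0,1\}^*$ whose finite second coordinate encodes the current state of $\Aa$ together with Automaton's pending transition, and one must take some care to meet the structural constraints (edges descending exactly one level, owner of a position fixed by depth parity), which is arranged by interleaving Automaton's and Pathfinder's moves appropriately. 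Then $\Aa$ accepts $T$ iff Automaton wins this game, so by $(1)$ the game is positionally determined and $\Aa$ \emph{rejects} $T$ iff Pathfinder has a positional winning strategy. A positional Pathfinder strategy on $T$ is a labelling of $\{0,1\}^*$ by finitely much local data; the automaton $\Bb$ guesses such a labelling along its run and then \emph{verifies} that the labelled strategy is winning for Pathfinder. The verification amounts to an $\omega$-regular test along the branches of $T$ — a subset construction tracking which states of $\Aa$ are reachable when Automaton plays against the labelled strategy, feeding a deterministic word automaton that checks ``every resulting play is lost by Automaton'' — so that $\Bb$ accepts $T$ iff $\Aa$ rejects $T$.

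The step to be careful about in $(1)\Rightarrow(2)$ is this verification, which uses complementation (equivalently, determinization) of automata on $\omega$-words; I would first confirm that this is available in $\aca$ — the relevant McNaughton-style constructions are finitary combinatorics together with arithmetical bookkeeping — and then check that the whole argument (the game encoding, the equivalences ``run $\leftrightarrow$ strategy'' and ``accepting $\leftrightarrow$ winning'', and correctness of the verifying automaton) goes through with arithmetical comprehension and a modest amount of induction.

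\emph{From $(2)$ to $(1)$.} Given an arbitrary almost tree-like parity game $G$, I would build a tree automaton $\Aa$ and a tree $T_G$ whose accepting runs code exactly the positional winning strategies for $\exists$ in $G$; in particular $\Aa$ accepts $T_G$ iff $\exists$ has a positional winning strategy in $G$, which settles one half of the dichotomy, and the same construction specialized to tree-like games already yields $\ouraxiom$ and hence plain determinacy of all the games in question. Applying $(2)$ to $\Aa$ produces an automaton $\Bb$ recognizing the complement of $L(\Aa)$, so whenever $\exists$ has no positional winning strategy, $\Bb$ accepts $T_G$. The crucial remaining point is to convert this into a positional winning strategy for $\forall$: since $(2)$ supplies only \emph{some} complement automaton, reading an accepting run of $\Bb$ directly is not enough, and I expect the argument to apply complementation in a relativized form — to the automata obtained from $\Aa$ by varying its initial state, together with suitable products — so as to reconstruct, ``in reverse'', the Gurevich--Harrington passage from ``Automaton loses the acceptance game'' to ``Pathfinder wins it positionally''. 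Extracting a positional strategy for the losing player is the step I expect to be the main obstacle of the whole proof; once it is available, checking that the reduction and the correspondence between runs and positional strategies formalize in $\aca$ should be routine.
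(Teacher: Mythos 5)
Your first implication is essentially the paper's own argument for Theorem~\ref{thm:complementation}$(1)\Rightarrow(2)$: encode the Automaton--Pathfinder acceptance game $G_{\Aa,T}$ as an (almost tree-like) parity game, use positional determinacy to conclude that $\Aa$ rejects $T$ iff Pathfinder wins positionally, and let $\Bb$ guess Pathfinder's positional strategy as an extra labelling and verify it branchwise via determinization of word automata (McNaughton/Safra), all of which is available in $\aca$. That half is fine, and your worry about where determinization lives is exactly the point the paper also checks.

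The second implication, however, has a genuine gap, and it is precisely the step you flag yourself: from ``$\Bb$ accepts $T_G$'' you cannot extract a positional winning strategy for $\forall$. Statement $(2)$ only provides \emph{some} automaton recognizing the complement of $L(\Aa)$; an accepting run of $\Bb$ on $T_G$ certifies that $\exists$ has no positional winning strategy, but it carries no canonical information about how $\forall$ should play, and the ``reverse Gurevich--Harrington passage'' from ``Automaton loses the acceptance game'' to ``Pathfinder wins it positionally'' is essentially the determinacy principle you are trying to establish, so appealing to it (even relativized to all initial states of $\Aa$, or to products) is circular. The paper avoids strategy extraction altogether. It observes that both ``$\exists$ wins positionally'' and ``$\forall$ wins positionally'' ($W^\exists_{0,x,k}$, $W^\forall_{0,x,k}$) are MSO-expressible with few quantifier blocks over trees coding the games; using $(2)$ for negations and nondeterministic projection for existential second-order blocks, $\aca$ then yields an automaton $\Aa_{x,k}$ recognizing exactly the trees coding games in which \emph{neither} player has a positional winning strategy. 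One then shows $\Aa_{x,k}$ is empty: any tree automaton accepting some tree accepts a \emph{regular} tree (Lemma~\ref{lem:regular-tree}, provable in $\aca$ from positional determinacy of parity games on \emph{finite} arenas), and the game coded by a regular tree reduces to a finite-arena parity game and is therefore positionally determined (Lemma~\ref{lem:regular-determined}). The resulting contradiction gives positional determinacy of all (almost) tree-like games, and specializing to tree-like games and invoking Lemma~\ref{lem:dwakropkacztery} yields $\ouraxiom$, i.e.\ $(3)$. Without such an emptiness/regular-tree argument (or some other replacement for your extraction step), your plan does not close; also note that your aside that the encoding of positional $\exists$-strategies as runs ``already yields $\ouraxiom$'' for tree-like games is unjustified as stated, since it only handles the half of the dichotomy where $\exists$ wins.
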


The remainder of this section is devoted to the proof of Theorem \ref{thm:complementation}.

\begin{proof}[Proof of Theorem \ref{thm:complementation}]
\noindent $(1) \Rightarrow (2)$. We formalize a standard proof of the complementation theorem for tree automata, similar e.g.\ to the one in \cite{thomas}. Let $\Aa = \{ \Sigma_\Aa, Q_\Aa, q^I_\Aa, \Delta_\Aa, \mathrm{rk}_\Aa\}$ be a parity tree automaton. We may assume w.l.o.g.\ that for each $a \in \Sigma_\Aa$, $q \in Q_\Aa$, there is at least one transition in $\Delta_\Aa$ of the form $(q,a,\cdot,\cdot)$. This is because $\Aa$ can be easily modified so as to satisfy this condition without changing the class of accepted trees.

Given a labelled binary tree $T$, consider the following game $G_{\Aa,T}$ between two players, Automaton and Pathfinder. The set of Automaton's positions is $\{0,1\}^{*} \times Q_\Aa$, with $(\emptyset, q^I_\Aa)$ the starting position, while the set of Pathfinder's positions is $\{0,1\}^{*} \times \Delta_\Aa$. Given a position $(w,q)$, Automaton can choose a transition from $q$, that is, move to the position $(w,\delta)$, where 
$\Delta_\Aa \ni \delta = (q,T(w),q_0,q_1)$. Pathfinder responds by deciding which direction to take from $w$, that is, by moving either to position $(w0, q_0)$ or to $(w1, q_1)$. A play of the game induces a sequence of states $\tuple{q_i}_{i \in \bbn} \in (Q_\Aa)^\bbn$, and Automaton wins if and only if
\[\liminf_{i \in \bbn}\mathrm{rk}_\Aa(q_i) \textrm{ is even}.\]
Note that because of the assumption that $\Aa$ has at least one transition from every letter and state, $G_{\Aa,T}$ can easily be formalized as an almost tree-like parity game.

$G_{\Aa,T}$ is defined in such a way that $\Aa$ accepts $T$ if and only if Automaton has a positional winning strategy. Thus, by positional determinacy, $\Aa$ does not accept $T$ if and only if Pathfinder has a positional winning strategy in $G_{\Aa,T}$.

Let $\Ss$ be the (finite) set of all maps from $\Delta_\Aa$ into $\{0,1\}$. Note that a positional strategy for Pathfinder can be represented as a labelled binary tree  $S$ such that $S(w) \in \Ss$
for all $w \in \{0,1\}^{*}$. The strategy is winning if for any choice of transitions $\tuple{\delta_w}_{w \in \{0,1\}^{*}}$ consistent with the labelling of $T$, the ranks of states of $\Aa$ appearing on the path determined by $S$ and 
$\tuple{\delta_w}_{w \in \{0,1\}^{*}}$ do not satisfy the parity condition.

We will construct an automaton $\Bb$ which accepts a tree $T$ if and only Pathfinder has a positional winning strategy in $G_{\Aa,T}$. The construction of $\Bb$ proceeds in four standard steps, of which three are elementary and one invokes McNaughton's determinization theorem for word automata (\cite{mcnaughton}), discussed separately below.

{In the first step} of the construction
we build a deterministic word automaton 
$\Aa_1$ which accepts all infinite words 
$\tuple{(s_i,a_i,\delta_i,\pi_i)}_{i \in \bbn} 
\in (\Ss \times \Sigma_\Aa \times \Delta_\Aa\times \{0,1\})^\bbn$  
such that if  
\begin{equation}\tag{$\star$} \forall i \! \in \! \bbn\, (s_i(\delta_i) = \pi_i)
\label{rule1}
\end{equation}
and if we define
\begin{align*}
q_0 & = q^I_\Aa \\
q_{i+1} & = \delta_i(a_i,q_i,\pi_i),
\end{align*}
then either at some point $q_{i+1}$ cannot be defined (i.e.\ $\delta_i$ is not a transition from state $q_i$ and letter $a_i$) or
\begin{equation*} 
\liminf_{i \in \bbn} \mathrm{rk}_\Aa(q_i)\ \text{is odd}.
\label{rule2}
\end{equation*}
The set of states of $\Aa_1$ is $Q_\Aa \cup \{\top\}$, 
where $\mathrm{rk}_{\Aa_1}(q) = \mathrm{rk}_{\Aa}(q) + 1$ for $q \in Q_{\Aa_1}$ 
and $\top$ is an additional accepting state (i.e.\ with rank 0). The transitions are defined in the natural way, except that whenever the rule (\ref{rule1}) is violated or $q_{i+1}$ cannot be defined, we redirect the computation to state $\top$.

{In the second step} we consider the following property of
infinite words $\tuple{(s_i,a_i,\pi_i)}_{i \in \bbn} \in (\Ss \times \Sigma_\Aa \times \{0,1\})^\bbn$: for every sequence 
$\tuple{\delta_i}_{i \in \bbn} \in (\Delta_\Aa)^\bbn$,
the word $\tuple{(s_i,a_i,\delta_i,\pi_i)}_{i \in \bbn}$ is
accepted by $\Aa_1$. Note that thanks to the fact that $\Aa_1$ is deterministic, the complement of this property is recognized by a nondeterministic word automaton. 
By determinization of word automata (\cite{mcnaughton}, see below) we can find a deterministic parity word automaton $\Aa_2$ recognizing this property. 


{In the third step} we define a tree automaton $\Aa_3$ over the alphabet $\Ss\times\Aa$ such that a tree $(S,T)$ is accepted if for every infinite path $\pi\in \{0,1\}^\bbn$, if $\tuple{(s_i,a_i)}_{i\in \bbn}$ is the sequence of labels appearing on this path, then the infinite word $\tuple{(s_i,a_i,\pi_i)}_{i \in \bbn}$ is accepted by the automaton $\Aa_2$. So, $\Aa_3$ accepts $(S,T)$ iff $S$ encodes a positional winning strategy for Pathfinder in $G_{\Aa,T}$. The states of $\Aa_3$ are the same as those of $\Aa_2$, and defining the transition function is unproblematic thanks to the fact that $\Aa_2$ is deterministic.


Finally, {in the fourth step} we define the nondeterministic tree automaton $\Bb$ as accepting a given tree $T$ over $\Sigma_\Aa$ if there exists a labelling 
$S \colon \{0,1\}^{*} \to \Ss$ such that the tree
$(S,T)$ is accepted by $\Aa_3$. During its computation on $T$, $\Bb$ uses nondeterminism to ``guess'' the labels $S(w)$ for 
$w \in \{0, 1\}^*$. By construction $\Bb$ accepts $T$ iff Pathfinder has a positional winning strategy in $G_{\Aa,T}$.

\emph{Determinization of word automata.} To complete the proof of $(1) \Rightarrow (2)$, we need to make sure that $\aca + (1)$ is able to prove that McNaughton's result \cite{mcnaughton} that every nondeterministic word automaton is equivalent to a deterministic automaton. In fact, this is provable in $\aca$ and does not require the full power of $\aca$.

The determinization of a nondeterministic word automaton with a parity condition proceeds in two steps. The first is to replace the original automaton by a nondeterministic B\"uchi automaton (i.e.\ with index $(0,1)$). This is straightforward: assume that the original automaton, say $\Aa$, has index $(0,x)$, so that $\Aa$ accepts a word if it has a computation in which the $\liminf$ of ranks of states is some odd number $y \le x$. The B\"uchi automaton $\Bb$ behaves like $\Aa$, except that it additionally uses nondeterminism to do two things. Firstly, in the very first transition it guesses the value of $y$. Secondly, at some point in the computation it guesses that states with rank $> y$ will no longer appear; from that point onwards, it assigns rank $0$ to states with rank $y$ in $\Aa$, rank $1$ to states with rank $< y$ in $\Aa$, and aborts the computation if $\Aa$ wants to enter a state with rank $> y$. 

It remains is to build a deterministic parity word automation simulating a nondeterministic B\"uchi word automaton $\Bb$. This can be carried out by means of the  \emph{Safra construction}, originally due to \cite{safra1}. This is a sophisticated refinement of the classical power set construction used for finite word automata (on finite words, a nondeterministic automaton with set of states $Q$ can be simulated by a deterministic automation with set of states $\Pp(Q)$). In the Safra construction, the states of the new deterministic automaton $\Dd$ are certain (bounded-size) trees with vertices labelled by letters from a certain fixed finite alphabet. 
The combinatorial details of the construction are a little involved, but the logical strength engaged is quite modest. The construction of $\Dd$ from $\Bb$ is completely elementary, the proof that acceptance of $\Bb$ implies acceptance of $\Dd$ requires nothing beyond defining number sequences by recursion with an arithmetical condition in the recursion step, and the other direction additionally makes use of K\"onig's Lemma in the form known as \emph{weak} K\"onig's Lemma, $\mathsf{WKL}$; all this is well-known to be readily formalizable in $\aca$. (For details of the Safra construction, see e.g.\ \cite{piterman,thomas}\footnote{The proof presented in \cite{thomas} yields a deterministic automaton with the so-called Rabin acceptance condition, which can be simulated without difficulty by a parity condition; the modified construction of \cite{piterman} leads directly to a deterministic parity automaton.}.)

\begin{remark}
We have not attempted a careful verification, but we believe that the proof of determinization for word automata goes through in the fragment of $\aca$ known as $\wkl$ extended by the induction scheme for $\Sigma^0_2$ formulas. Without $\Sigma^0_2$ induction, the basic notions of automata theory on infinite structures make little sense, in particular the $\liminf$ of ranks appearing in a computation might not exist.  
\end{remark}

\medskip

\noindent $(2) \Rightarrow (1)$. 
For any fixed $x,k \in \bbn$, we can represent almost tree-like parity games with index $(0,x)$ and arena $\{0,1\}^{*}\times\{0,\ldots,k\}$ by labelled binary trees over a suitable alphabet $\Sigma_{x,k}$. The label of a node $v$ in such a tree lists the ranks of all positions $(v,i)$ and lists all pairs $((v,i),(v0,j))$ and 
$((v,i),(v1,j))$ in $E$. Thus, $\Sigma_{x,k}$ should contain $(x+1)^{k+1} \cdot 2^{2(k+1)^2}$ symbols.

Assume $\neg(1)$ and let $x \in \bbn$ be such that there is an undetermined almost tree-like parity game with index $(0,x)$, set of positions $\{0,1\}^{*}\times\{0,\ldots,k\}$ and edge relation $E$ such that $((v,i),(w,j)) \in E$ implies $w = vb$ for some $b \in \{0,1\}$.

A tree over $\Sigma_{x,k}$ representing a game $G$ has the property 
$W^\exists_{0,x,k}$ (resp.\ $W^\forall_{0,x,k}$) if $\exists$ (resp. $\forall$) has a positional winning strategy in $G$. Note that both
$W^\exists_{0,x,k}$ and $W^\forall_{0,x,k}$ can be expressed by MSO sentences with 4 blocks of quantifiers. For instance, the sentence
expressing $W^\exists_{0,x,k}$ is the prenex normal form of
\[\exists S_1\, \ldots \,\exists S_\ell\, 
\forall P_1\, \ldots\, \forall P_r\, \left[\alpha(\bar S, \bar P) \vee \bigvee_{0 \le y \le \lfloor x/2 \rfloor} \beta_{2y}(\bar S, \bar P)\right],\]
where the $S$'s represent a positional strategy for player $\exists$, the $P$'s represent potential plays of the game, $\alpha$ is a purely existential first-order formula stating that the play $\bar P$ is inconsistent with the strategy $\bar S$, and each
$\beta_{2y}$ is an $\forall\exists \vee \exists \forall$ first-order formula stating that $2y$ is the $\liminf$ of ranks appearing in the play $\bar P$. The sentence expressing $W^\forall_{0,x,k}$ is defined analogously.

Now assume $(2)$. It is routine to verify in $\aca$ that any quantifier-free expressible property of labelled trees is recognized by a nondeterministic tree automaton. The usual argument by induction on formula complexity  (see e.g.\ \cite[Theorem 6.7, cf.\ Theorem 3.1]{thomas}), using (2) in the step for $\neg$ and the nondeterminism of automata in the step for a block of second-order $\exists$'s, proves that every property of labelled trees expressible by an MSO sentence with at most $5$ quantifier blocks can be recognized by a tree automaton. Hence, for any fixed $x,k$ there is an automaton $\Aa_{x,k}$ recognizing $\neg W^\exists_{0,x,k} \wedge  \neg W^\forall_{0,x,k}$. It remains to show that this implies (1).

A labelled tree $T$ is \emph{regular} if there is some
bound $d \in \bbn$ with the following property: for every vertex $v \in \{0,1\}^{*}$ there exists $w \in \{0,1\}^{< d}$ such that the subtree of $T$ under $w$ is the same as the subtree under $v$, that is, for all $u \in \{0,1\}^{*}$,
$T(vu) = T(wu)$. We complete the argument by proving two lemmas about regular trees which together immediately imply that $\Aa_{x,k}$ cannot accept any tree, and therefore (1) holds. The first lemma expresses a completely standard fact, whereas the second is interesting only in a context where positional determinacy of parity games is not known in advance.

\begin{lemma}\label{lem:regular-tree}
$\aca$ proves that any tree automaton which accepts some tree also accepts a regular tree.
\end{lemma}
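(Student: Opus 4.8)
The plan is to adapt the textbook argument that a nonempty parity tree automaton accepts a \emph{regular} tree via the \emph{emptiness game}, checking that every ingredient is available over $\aca$. Fix a tree automaton $\Aa = \langle \Sigma_\Aa, Q_\Aa, q^I_\Aa, \Delta_\Aa, \mathrm{rk}_\Aa\rangle$; as in the proof of $(1)\Rightarrow(2)$ we may assume that for every $q\in Q_\Aa$ and $a\in\Sigma_\Aa$ there is a transition $(q,a,\cdot,\cdot)\in\Delta_\Aa$. Associate to $\Aa$ the parity game $E_\Aa$ played on the \emph{finite} arena $Q_\Aa\cup\Delta_\Aa$: from a position $q\in Q_\Aa$ (belonging to Automaton), Automaton picks a transition $\delta=(q,a,q_0,q_1)\in\Delta_\Aa$ out of $q$; from a position $\delta=(q,a,q_0,q_1)$ (belonging to Pathfinder), Pathfinder moves to $q_0$ or to $q_1$; the initial position is $q^I_\Aa$, and in a play, which visits a sequence of states $r_0=q^I_\Aa,r_1,r_2,\dots$, Automaton wins iff $\liminf_i \mathrm{rk}_\Aa(r_i)$ is even. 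Once the transition positions are given a fixed maximal rank (which does not affect the $\liminf$), this is a parity game of some index $(0,x)$ in the sense of Section~\ref{sec:preliminaries}.

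First I would show that if $\Aa$ accepts some tree $T$, then Automaton has a winning strategy in $E_\Aa$. Given an accepting run $\rho$ of $\Aa$ on $T$, write $q_v$ for the state recorded in $\rho(v)$, and let Automaton respond to a partial play in which Pathfinder's directional choices so far spell the word $w\in\{0,1\}^{*}$ by playing the transition $\rho(w)$. An induction on $|w|$ shows that this keeps the current state of the play equal to $q_w$, so the move is always legal, and every infinite play consistent with this strategy visits exactly the sequence of states along some branch of $\rho$, which has even $\liminf$ of ranks since $\rho$ is accepting. Since no game can have winning strategies for both players, the positional determinacy of finite parity games (see the last paragraph) then hands Automaton a \emph{positional} winning strategy $\sigma\colon Q_\Aa\to\Delta_\Aa$ with $\sigma(q)$ always a transition out of $q$.

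Next I would extract a regular accepted tree from $\sigma$. Define $\rho'\colon\{0,1\}^{*}\to\Delta_\Aa$ by recursion along the binary tree: $\rho'(\epsilon)=\sigma(q^I_\Aa)$, and if $\rho'(v)=\sigma(q)=(q,a,q_0,q_1)$ then $\rho'(v0)=\sigma(q_0)$ and $\rho'(v1)=\sigma(q_1)$; this is legitimate in $\aca$ since the recursion step is arithmetical. Let $T'(v)$ be the letter recorded in $\rho'(v)$. By construction $\rho'$ is a consistent run of $\Aa$ on $T'$, and along any branch the sequence of states it visits is a play of $E_\Aa$ consistent with $\sigma$, hence has even $\liminf$ of ranks, so $\Aa$ accepts $T'$. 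For regularity, let $\mathrm{st}(v)$ be the state recorded in $\rho'(v)$; then $\rho'(v)=\sigma(\mathrm{st}(v))$ and $\mathrm{st}(vi)$ depends only on $\mathrm{st}(v)$ and $i$, so the subtree of $T'$ below $v$ depends only on $\mathrm{st}(v)\in Q_\Aa$. A short splicing argument — if a branch passes through vertices $u,u'$ with $u$ a proper prefix of $u'$ and $\mathrm{st}(u)=\mathrm{st}(u')$, delete the segment between them — shows that every value of $\mathrm{st}$ occurring in $T'$ already occurs at a vertex of depth $<\mathrm{card}(Q_\Aa)$, so $d:=\mathrm{card}(Q_\Aa)$ witnesses that $T'$ is regular.

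The one step that is not simply copied from a textbook, and which I expect to be the main thing to pin down, is the use of positional determinacy of \emph{finite} parity games inside $\aca$. This is not circular: statement $(1)$, which we are deriving from $(2)$, concerns arbitrary — possibly infinite — almost tree-like parity games, whereas $E_\Aa$ is finite. For finite arenas positional determinacy is classical and, crucially, admits a purely combinatorial proof (induction on the arena, using attractor sets and a decidable analysis of the maximal priority) which manipulates only finite subsets of the finite arena and decidable relations on it; its sole infinitary ingredient, namely that the $\liminf$ of the ranks visited by a play exists, is exactly what induction for $\Sigma^0_2$ formulas — available in $\aca$ — supplies (cf.\ the Remark above). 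Hence the whole argument goes through over $\aca$.
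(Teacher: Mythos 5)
Your proposal is correct and follows essentially the same route as the paper: the finite-arena emptiness game on $Q_\Aa\cup\Delta_\Aa$, positional determinacy of parity games on finite arenas (which the paper simply cites, provable even in $S^2_2$, rather than re-deriving inside $\aca$), and the unravelling of the positional strategy into an accepted tree whose subtrees depend only on the current state, giving regularity with bound $\mathrm{card}(Q_\Aa)$.
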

\begin{proof}
Let $\Aa$ be a tree automaton of index $(0,x)$. Consider the following parity game $G_\Aa$. The game arena is split into 
$V_\exists = Q_\Aa$ and $V_\forall = \Delta_\Aa$ with $q^I_\Aa$ as the initial position. When the game is in position $q \in Q_\Aa$, player $\exists$ moves to some $\delta \in \Delta_\Aa$ of the form $(q,a,q',q'')$ for some $a \in \Sigma_\Aa, q', q'' \in Q_\Aa$. Now the game is in position $\delta$, and player $\forall$ decides whether to go left or right, that is, moves to either $q'$ or $q''$. Player $\exists$ wins if $\liminf$ of ranks of states visited during the play is even (formally this is achieved by setting the ranks of all $\delta \in \Delta_\Aa$ to $x$).

The positional determinacy of parity games on finite arenas is known to be provable even in $S^2_2$, a very weak subtheory of $\aca$ \cite{beckmann-moller:parity}. Moreover, if $\Aa$ accepts some tree $T$, then player $\exists$ clearly has a winning strategy in $\Aa$, and thus also a positional winning strategy, say $\sigma$. Now construct a labelled tree $T_\sigma$ in the following way: to determine the label $T(w)$ for $w \in \{0,1\}^{*}$, simulate a play of $G_\Aa$ in which $\exists$ plays according to $\sigma$, $\forall$ chooses directions so as to reach $w$, and for each $i \le \mathrm{lh}(w)$ the label $T(w{\upharpoonright_i})$ is chosen to correspond to the transition $\delta$ chosen by $\exists$. $T_\sigma$ is accepted by $\Aa$ because $\sigma$ is a winning strategy. Additionally, $T_\sigma$ is a regular tree because $G_\Aa$ has a finite arena and $\sigma$ is positional.
\end{proof}

\begin{lemma}\label{lem:regular-determined}
$\aca$ proves that for each $x$ and $k$, the almost treelike parity game represented by a regular tree $T$ over $\Sigma_{x,k}$ is determined.
\end{lemma}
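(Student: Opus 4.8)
The plan is to use regularity to collapse $G$ onto a \emph{finite} parity game, appeal to positional determinacy of finite parity games (provable in $\aca$, indeed in $S^2_2$, by \cite{beckmann-moller:parity}), and then transfer the resulting winning strategy back up to $G$. Regularity is exactly what makes this work: a general almost tree-like game is not provably determined in $\aca$ (that would essentially be statement $(1)$), but a game coded by a regular tree has only finitely many distinct ``sub-games''.

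First I would extract the finite data. Let $d \in \bbn$ witness regularity of $T$ and write $T_v$ for the subtree rooted at $v$, i.e.\ $T_v(u) = T(vu)$. Working in $\aca$: the relation $T_w = T_{w'}$ on the finite set $\{0,1\}^{<d}$ is $\Pi^0_1$, hence available, so I can choose least representatives and let $S_1,\dots,S_m$ be the distinct subtrees among $\{T_w : |w| < d\}$. By regularity every $T_v$ equals exactly one $S_j$, so the map $j\colon\{0,1\}^* \to \{1,\dots,m\}$ with $S_{j(v)} = T_v$ is arithmetically definable and, provably in $\aca$, total; moreover $j(vb)$ depends only on $j(v)$ and the direction $b \in \{0,1\}$, since the $b$-subtree of $T_v$ is the $b$-subtree of $S_{j(v)}$.

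Next I would define the finite parity game $\hat G$: its arena is $\hat V = \{1,\dots,m\}\times\{0,\dots,k\}\times\{0,1\}$, where the third coordinate records the parity of the depth and hence the owner ($(j,i,0) \in \hat V_\exists$, $(j,i,1)\in \hat V_\forall$, matching $V_\exists = \bigcup_i \bbn^{2i}\times\{0,\dots,k\}$); the rank of $(j,i,p)$ is the rank recorded for index $i$ in the root label of $S_j$; and $(j,i,p) \to (j',i',1-p)$ is an edge iff for some direction $b$ the root label of $S_j$ records an edge from index $i$ to index $i'$ in direction $b$ and the $b$-subtree of $S_j$ equals $S_{j'}$. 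All of this is read off from the finite list $S_1,\dots,S_m$, so $\hat G$ is a genuine finite parity game (every state has a successor because every state of $G$ does), and by \cite{beckmann-moller:parity} $\aca$ proves $\hat G$ is positionally determined from the initial position $(j(v_0),i_0,|v_0|\bmod 2)$, where $(v_0,i_0) = v_I$.

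Finally I would transfer the winning strategy. If $\exists$ wins $\hat G$ positionally via $\hat\sigma$, define $\sigma$ on $G$ by: at $(v,i)\in V_\exists$ with $\hat\sigma(j(v),i,0) = (j',i',1)$, pick (say, the leftmost) $b$ with $((v,i),(vb,i'))\in E$ and $j(vb)=j'$, and set $\sigma(v,i) = (vb,i')$. The map $(v,i)\mapsto(j(v),i,|v|\bmod 2)$ sends any play of $G$ consistent with $\sigma$ to a play of $\hat G$ consistent with $\hat\sigma$ with the same sequence of ranks, so since $\hat\sigma$ wins, the $\liminf$ of ranks is even and $\sigma$ wins for $\exists$ in $G$; the symmetric construction handles the case where $\forall$ wins $\hat G$. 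Hence $G$ is determined. The only mildly delicate points are the bookkeeping of the depth-parity bit (needed precisely because $T_v = T_w$ need not imply $|v| \equiv |w| \pmod 2$) and observing that the subtree-type machinery, although set up via $\Pi^0_1$ comparisons of infinite trees, yields honestly arithmetical objects over $\aca$; I do not expect a serious obstacle.
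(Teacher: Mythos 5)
Your proof is correct and takes essentially the same route as the paper's: both collapse the regular tree, via its finitely many subtree types together with a depth-parity bookkeeping device, to a finite parity game, invoke positional determinacy of finite-arena parity games (\cite{beckmann-moller:parity}) inside $\aca$, and pull the positional winning strategy back along the arithmetically definable quotient map. The only cosmetic difference is that the paper handles the ownership issue by choosing length-lexicographically least representative vertices of matching depth parity, where you instead add an explicit parity bit as a third coordinate.
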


\begin{proof}[Proof of Lemma \ref{lem:regular-determined}]
Let $T$ be a regular tree over $\Sigma_{x,k}$ encoding an almost treelike parity game $G$. Define $U$ to be the set consisting of those $v\in \{0,1\}^{*}$ for which there is no $w$ length-lexicographically smaller than $v$ such that the lengths of $v$ and $w$ are congruent mod $2$ and $T(vu) = T(wu)$ for all $u$ (the additional congruence condition is needed to distinguish between moves of different players). 
The formula defining $U$ is arithmetical, so $U$ is indeed a set. Moreover, since $T$ is regular, the set $U$ is finite. 

Consider the following parity game $H$ with arena 
$V := U \times \{0,\ldots, k\}$. The assignment of positions to players and the ranks of positions are inherited from $G$. A move from $(v,i)$ to $(w,j)$ is possible in $H$ if and only if the subtree of $T$ under $w$ is identical to the subtree under a son $vb$ of $v$ and the label $T(v)$ indicates that a move from $(v,i)$ to $(vb, j)$ is possible in $G$. The game $H$ exists by arithmetical comprehension. 
By positional determinacy of parity games on finite arenas, $H$ 
is positionally determined provably in $\aca$. Let $\sigma_H$ be a positional winning strategy for one of the players, say $\exists$.

We can use $\sigma_H$ to define a positional strategy $\sigma_G$ in the game $G$ represented by $T$: at position $(v,i)$, player $\exists$ finds the unique vertex $w \in U$ such that the lengths of $w$ and $v$ are congruent mod $2$ and $T$ under $v$ is the same as $T$ under $w$, and then moves from $(v,i)$ analogously to the move $\sigma_H$ would make at $(w,i)$. The strategy $\sigma_G$ exists by arithmetical comprehension, and since any play of $G$ consistent with $\sigma_G$ corresponds to a play of $H$ consistent with $\sigma_H$, $\sigma_G$ is in fact a winning strategy for $\exists$ in $G$.
\end{proof}

\noindent
The proofs of the Lemmas conclude the proof of Theorem \ref{thm:complementation}.
\end{proof}

\section{Determinacy vs comprehension}\label{sec:determinacy-comprehension}

We now turn our attention to the question which set-theoretic existence axioms are needed to prove the determinacy statements
considered in Theorem \ref{thm:complementation} and Corollary
\ref{cor:complementation}. For the statement $\ouraxiom$, 
sharp upper and lower bounds on the requisite amount of comprehension follow from earlier work on determinacy statements in second-order arithmetic, most importantly \cite{mollerfeld_thesis, heinatsch_mollerfeld, medsalem_tanaka_delta_03}.

We begin with the upper bound. The following result is a direct corollary of the proof of Theorem 4.3 in \cite{medsalem_tanaka_delta_03}:

\begin{theorem}[\cite{medsalem_tanaka_delta_03}]\label{thm:det_x_upperbound}
$\aca + \Delta^1_2\textrm{-}\mathsf{MI} + \Sigma^1_3\ind 
\vdash \ouraxiom$.
\end{theorem}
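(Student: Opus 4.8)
The plan is to inspect the proof of Theorem~4.3 of \cite{medsalem_tanaka_delta_03} and verify that the comprehension it uses is confined to the least-fixed-point principle $\Delta^1_2\textrm{-}\mathsf{MI}$. The reduction rests on one simple observation: for a fixed $\Pi^0_2$ formula $\psi(y,f)$, the statement that the associated $(\Sigma^0_2)_x$ formula holds of $f$ can be written as
\[\es{y}{x}\bigl(y\text{ even}\wedge\psi(y,f)\wedge\as{z}{y}\neg\psi(z,f)\bigr)\vee\bigl(x\text{ even}\wedge\as{z}{x}\neg\psi(z,f)\bigr),\]
a number-bounded boolean combination of $\psi$ and $\neg\psi$, so it is simultaneously $\Sigma^0_3$ and $\Pi^0_3$, \emph{uniformly in $x$}. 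Thus $\ouraxiom$ amounts to the determinacy of a uniformly-$\mathbf{\Delta^0_3}$ family of games, and it suffices to carry out the argument of \cite{medsalem_tanaka_delta_03} in a form parametrized by the difference-hierarchy rank $x$. By Proposition~\ref{prop:bairecantor} we may moreover argue in Cantor space, where the low-complexity auxiliary games below are determined already in weak base theories.

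I would prove $\sigmadet{x}$ by induction on $x$; since $\sigmadet{x}$ is $\Pi^1_3$ and $\Sigma^1_3\ind$ includes induction for $\Pi^1_3$ formulas, this is available in our theory. The case $x=0$ is trivial, and $x=1$ asks for $\mathbf{\Pi^0_2}$-determinacy, equivalently $\mathbf{\Sigma^0_2}$-determinacy --- the Davis--Tanaka covering theorem (\cite{tanaka}) --- which goes through in $\aca+\Delta^1_2\textrm{-}\mathsf{MI}$: one covers the $F_\sigma$ game by an auxiliary game in which the pursuing player may at any move ``commit'' to one of the closed approximants and thereafter stay inside it, so that the sub-games following a commitment are closed and Gale--Stewart-determined; if the opponent wins the auxiliary game, a winning strategy for the opponent in the original game is assembled from its ``escape'' sub-strategies along the least fixed point of the monotone operator ``adjoin a position $p$ if from $p$ the opponent can win the auxiliary game at once or drive the play into the part already constructed'', and $\Delta^1_2\textrm{-}\mathsf{MI}$ provides this fixed point together with the ranking prewellordering of Definition~\ref{def:fixpoint} along which the strategy is defined by arithmetical recursion. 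For the step $\sigmadet{x}\Rightarrow\sigmadet{x+1}$, peeling off the outermost $\Pi^0_2$ layer $\psi(0,\cdot)$ of a $(\Sigma^0_2)_{x+1}$ game and reindexing by $\chi(y,f):=\psi(y+1,f)$ leaves a $(\Sigma^0_2)_x$-defining formula with the winning parity reversed; a covering argument of the same shape --- now committing to the $\Pi^0_2$ layer rather than a closed set --- reduces the game on a modified arena to a combination of a $\mathbf{\Pi^0_2}$ game and a $(\Sigma^0_2)_x$ game, which are handled respectively by the case $x=1$ and by the induction hypothesis (only finitely many sub-games are spawned, so no choice principle is needed), and the strategies are glued by a further application of $\Delta^1_2\textrm{-}\mathsf{MI}$ to an operator that is $\Delta^1_2$ and monotone uniformly in $x$. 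The whole induction on $x$ therefore consumes a single instance of $\Sigma^1_3\ind$.

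The part that I expect to demand the most care --- and which is done in detail in \cite{medsalem_tanaka_delta_03} --- is checking that the gluing operator at each stage is genuinely presented by a $\Delta^1_2$ pair of formulas and is monotone, so that $\Delta^1_2\textrm{-}\mathsf{MI}$ applies. It is naturally written with an existential quantifier over the opponent's auxiliary-game strategies followed by a universal quantifier over plays, hence $\Sigma^1_2$; its equivalent $\Pi^1_2$ form and its monotonicity both rely on the auxiliary games' being determined, so the construction must be organized so that their determinacy is already in hand when the operator is introduced. The only other thing to confirm is the bookkeeping point behind the phrase ``corollary of the proof'': we are replacing $\Pi^1_2\ca$ --- which by Theorem~\ref{thm:fixpoint} and the provability of $\Sigma^1_n\ind$ in $\Pi^1_n\ca$ already proves both hypotheses --- by the weaker $\aca+\Delta^1_2\textrm{-}\mathsf{MI}+\Sigma^1_3\ind$, so one must check that nowhere in the \cite{medsalem_tanaka_delta_03} argument does comprehension enter except through the fixed points granted by $\Delta^1_2\textrm{-}\mathsf{MI}$.
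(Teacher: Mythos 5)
Your proposal is correct and takes essentially the same route as the paper, which simply defers to the proof of Theorem 4.3 in \cite{medsalem_tanaka_delta_03}: establish each level and the successor step $\forall x\,(\sigmadet{x}\Rightarrow\sigmadet{x+1})$ in $\aca+\Delta^1_2\textrm{-}\mathsf{MI}$ (the content of Theorem~\ref{thm:induction}) and then close the induction on $x$ by $\Sigma^1_3\ind$ applied to the $\Pi^1_3$ formula $\sigmadet{x}$. The care points you flag --- that the gluing operators are genuinely $\Delta^1_2$ and monotone, and that comprehension enters only through $\Delta^1_2\textrm{-}\mathsf{MI}$ --- are precisely what the citation to \cite{medsalem_tanaka_delta_03} is meant to cover.
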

By Theorem \ref{thm:fixpoint}, it follows that $\ouraxiom$
is provable from $\Pi^1_2\ca + \Sigma^1_3\ind$, and hence also from $\Pi^1_3\ca$. In fact, it is known
that $\Pi^1_3\ca$ already suffices to formalize Davis' proof of $\mathbf{\Sigma^0_3}$ determinacy.

On the other hand, $\Pi^1_2\ca$ by itself can only prove a smaller amount of determinacy:

\begin{theorem}[\cite{heinatsch_mollerfeld}, relying heavily on \cite{mollerfeld_thesis}]\label{thm:det_n_characterization}
Over $\aca$, the theory \[\{\sigmadet{n} : n \in \omega\}\] implies all the $\Pi^1_1$ consequences of
$\Pi^1_2\ca$.
\end{theorem}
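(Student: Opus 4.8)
The plan is to route the argument through M\"ollerfeld's proof-theoretic analysis of $\Pi^1_2\ca$. M\"ollerfeld's thesis \cite{mollerfeld_thesis} isolates an \emph{arithmetical $\mu$-calculus} theory $\mathsf{M}$: a theory in the language $L_2$ augmented by a least-fixed-point operator $\mu$ and a greatest-fixed-point operator $\nu$, which may be applied, with arbitrary finite nesting, to arithmetical formulas positive in the fixed-point variable, and whose axioms assert that each such expression denotes the corresponding fixed point and satisfies its defining equivalence. The key input is M\"ollerfeld's reduction: every $\Pi^1_1$ theorem of $\Pi^1_2\ca$ is already a theorem of $\mathsf{M}$. (The easy converse $\mathsf{M}\subseteq\Pi^1_2\ca$ follows, by induction on nesting depth, from $\Sigma^1_2\textrm{-}\mathsf{AC}$ and $\Delta^1_2\textrm{-}\mathsf{MI}$ --- Theorems \ref{thm:choice} and \ref{thm:fixpoint} --- but is not needed here; the reduction itself is a substantial piece of ordinal analysis, which we use as a black box.) Since any derivation uses only finitely many axioms, it suffices to show that $\aca + \{\sigmadet{n} : n \in \omega\}$ proves every axiom of $\mathsf{M}$: this yields that it proves every theorem of $\mathsf{M}$, in particular every $\Pi^1_1$ consequence of $\Pi^1_2\ca$.

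So fix an axiom of $\mathsf{M}$, attached to a $\mu$-calculus expression whose fixed-point operators nest to depth $n \in \omega$; we must derive it from $\aca + \sigmadet{h(n)}$ for a suitable $h(n) \in \omega$ depending only on $n$. This is the content we extract from \cite{heinatsch_mollerfeld}. One forms the \emph{evaluation game} of the expression, in the standard style of $\mu$-calculus model checking: a parity game whose positions record a natural number together with a subexpression, in which Verifier owns the disjunctions and existential number quantifiers and Falsifier owns the conjunctions and universal number quantifiers, and in which the unfolding of a $\mu$- (resp.\ $\nu$-) subexpression is assigned a rank so that, on an infinite play, the parity acceptance condition holds precisely when the outermost fixed-point operator unfolded infinitely often is a $\nu$. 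This is an entirely finitary, arithmetical construction. After routine coding, and using Lemma \ref{lem:dwakropkacztery} and Proposition \ref{prop:bairecantor} to pass freely between parity games, Cantor-space determinacy and Baire-space determinacy, the evaluation game becomes a $(\mathbf{\Sigma^0_2})_{h(n)}$ Gale-Stewart game with $h(n)$ depending only on $n$, roughly one extra rank per level of fixed-point nesting. Applying $\sigmadet{h(n)}$ with each possible initial position and collecting, by arithmetical comprehension, the set $P$ of starting points from which Verifier wins, one checks that $P$ satisfies the fixed-point equivalence demanded by the axiom; that $P$ is moreover the \emph{least} (resp.\ \emph{greatest}) fixed point at each nesting level is built into the parity ranking and follows from the adequacy of parity-game semantics for the $\mu$-calculus. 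Hence $\aca + \sigmadet{h(n)}$ proves the axiom, and $n$ was arbitrary.

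Combining the two parts, $\aca + \{\sigmadet{n} : n \in \omega\}$ proves every axiom, and therefore every theorem, of $\mathsf{M}$, and hence --- by M\"ollerfeld's reduction --- every $\Pi^1_1$ consequence of $\Pi^1_2\ca$. The main obstacle is the second step: carrying out the $\mu$-calculus/parity-game translation inside the weak base theory $\aca$ with the complexity bookkeeping uniform in $n$, so that nesting depth $n$ really does produce a $(\mathbf{\Sigma^0_2})_{h(n)}$ game with $h$ independent of the underlying arithmetical matrices, and verifying that positional determinacy yields not merely \emph{a} fixed point but the least (or greatest) one. It is precisely this uniformity in $n$ that makes the scheme $\{\sigmadet{n} : n \in \omega\}$ --- rather than any single instance --- the right hypothesis for capturing the full $\Pi^1_1$ strength of $\Pi^1_2\ca$; M\"ollerfeld's reduction, though deep, can be taken off the shelf.
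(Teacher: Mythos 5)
The first thing to say is that the paper does not prove this theorem at all: it is imported from \cite{heinatsch_mollerfeld} (resting on \cite{mollerfeld_thesis}), and the only in-paper argument in this vicinity is the proof sketch of the sharpened Theorem \ref{thm:det_n_characterization-sharpened}, which follows the same skeleton you describe but routes M\"ollerfeld's reduction through set-theoretic intermediaries ($\mathsf{Lim}(\prec_1)$, Axiom $\beta$, $\mathsf{BT}\sigma^r$) before landing in the arithmetical $\mu$-calculus and then citing \cite{heinatsch_mollerfeld} for conservativity over $\aca + \{\sigmadet{n} : n \in \omega\}$. So your two-stage plan --- (a) M\"ollerfeld: the $\Pi^1_1$ consequences of $\Pi^1_2\ca$ are theorems of the arithmetical $\mu$-calculus; (b) the $\mu$-calculus follows from the determinacy scheme over $\aca$ --- is exactly the decomposition used by the sources the paper cites, with (a) taken off the shelf just as the paper takes the whole theorem off the shelf.

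The trouble is that step (b), the only part you actually argue, glosses over precisely the two points that constitute the technical content of \cite{heinatsch_mollerfeld}. First, you cannot collect ``the set $P$ of starting points from which Verifier wins'' by arithmetical comprehension: ``Verifier has a winning strategy from $p$'' is a $\Sigma^1_2$ statement, and applying $\sigmadet{h(n)}$ position-by-position only gives, for each $p$ separately, a winning strategy for one of the players, with no uniformity across positions. To make $P$ arithmetically definable one needs something like a single pair of (positional) strategies winning wherever the respective player wins, and obtaining such uniform strategies is itself nontrivial in $\aca$; compare the Claim inside the proof of Lemma \ref{lem:parity-induction}, which invokes $\Sigma^1_2\textrm{-}\mathsf{AC}$ and $\Pi^1_1$-comprehension for exactly this purpose. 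Second, your justification that $P$ is the \emph{least} (resp.\ greatest) fixed point --- ``adequacy of parity-game semantics for the $\mu$-calculus'' --- is circular in this setting: the classical adequacy proof proceeds by transfinite induction on ordinal approximants of the fixed points, which is unavailable in $\aca$, and deriving the induction scheme (every prefixed point contains $P$) from determinacy of $(\mathbf{\Sigma^0_2})_{h(n)}$ games is the heart of the Heinatsch--M\"ollerfeld argument, not a citable commonplace. If you permit yourself to cite \cite{heinatsch_mollerfeld} for (b) as you cite \cite{mollerfeld_thesis} for (a), the proposal collapses to the paper's own citation; read as a self-contained argument, these are two genuine gaps.
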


In fact, the methods of \cite{mollerfeld_thesis} can be used to prove a strengthening of Theorem \ref{thm:det_n_characterization}. We state the stronger version, accompanied by a proof sketch, as Theorem \ref{thm:det_n_characterization-sharpened} below.

Theorem \ref{thm:det_n_characterization} and a standard argument imply the following corollary,
which is proved exactly as the formally slightly weaker Theorem 5.6 in \cite{medsalem_tanaka_delta_03}:

\begin{corollary}[essentially in \cite{medsalem_tanaka_delta_03}]\label{cor:det_x_lowerbound}
$\Delta^1_3\ca \not \vdash \ouraxiom$.
\end{corollary}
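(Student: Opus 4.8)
The plan is to argue by contradiction, in exactly the way that \cite{medsalem_tanaka_delta_03} establishes the formally weaker statement that $\Delta^1_3\ca$ does not prove $\mathbf{\Delta^0_3}$-determinacy. So suppose that $\Delta^1_3\ca \vdash \ouraxiom$. Since $\ouraxiom$ is the sentence $\forall x\,[\sigmadet{x}]$, instantiating the leading first-order quantifier gives $\Delta^1_3\ca \vdash \sigmadet{n}$ for every $n \in \omega$. Hence, by Theorem~\ref{thm:det_n_characterization} (which applies because $\Delta^1_3\ca$ extends $\aca$), the theory $\Delta^1_3\ca$ proves every $\Pi^1_1$ consequence of $\Pi^1_2\ca$.

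The next step is where the \emph{standard argument} enters: one has to observe that $\ouraxiom$ is a \emph{single} $\Pi^1_3$ sentence, and is therefore considerably stronger than the schema $\{\sigmadet{n} : n \in \omega\}$ taken on its own. Concretely, the proof of Theorem~\ref{thm:det_n_characterization} relativizes uniformly, with the difference-hierarchy level kept as a free first-order variable, so that it can be carried out inside $\Delta^1_3\ca$ using only the induction that $\Delta^1_3\ca$ provides (it proves $\Sigma^1_2\ind$). From the single sentence $\ouraxiom$ this yields, provably in $\Delta^1_3\ca$, not merely each $\Pi^1_1$ consequence of $\Pi^1_2\ca$ individually but a uniform version of this fact — in particular a $\Pi^0_1$ sentence that amounts to a consistency statement at least as strong as $\mathrm{Con}(\Delta^1_3\ca)$. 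Since $\Delta^1_3\ca$ is true in the standard model and hence consistent, this contradicts G\"odel's second incompleteness theorem. Therefore $\Delta^1_3\ca \not\vdash \ouraxiom$.

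The main obstacle is precisely the passage from ``$\aca + \{\sigmadet{n} : n \in \omega\}$ proves all $\Pi^1_1$ consequences of $\Pi^1_2\ca$'' to the stronger, \emph{uniform} conclusion drawn from the single sentence $\ouraxiom$: one must inspect the proof of Theorem~\ref{thm:det_n_characterization} (which rests on \cite{mollerfeld_thesis,heinatsch_mollerfeld}) closely enough to see that the level parameter can be carried along as a variable, and to verify that the amount of induction consumed does not exceed $\Sigma^1_2\ind$, which $\Delta^1_3\ca$ does prove. Once this bookkeeping is in place, the remainder is routine formalized provability, which is why \cite{medsalem_tanaka_delta_03} — and we — treat it as a standard argument given Theorem~\ref{thm:det_n_characterization}.
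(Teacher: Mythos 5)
There is a genuine gap: your argument never produces a consistency statement by any identifiable mechanism, and it omits the two ingredients that the paper's proof actually rests on. First, the paper reduces the whole problem to $\Pi^1_2\ca$ via the fact that $\Delta^1_3\ca$ is $\Pi^1_4$-conservative over $\Pi^1_2\ca$; you work with $\Delta^1_3\ca$ directly and never invoke any such conservativity, so even a successful derivation of, say, $\mathrm{Con}(\Pi^1_2\ca)$ from $\ouraxiom$ would not by itself tell you anything about provability in $\Delta^1_3\ca$. Second, the step that actually converts the hypothesis ``$\ouraxiom$ is provable'' into a consistency statement is the $\beta_2$-model reflection of Theorem~\ref{thm:beta}: since every true $\Pi^1_3$ sentence holds in a $\beta_2$-model, $\Pi^1_2\ca \vdash \varphi \Rightarrow \mathrm{Con}(\varphi)$ for $\Pi^1_3$ sentences $\varphi$, applied to $\varphi = \aca \wedge \ouraxiom$. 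Your proposal contains no substitute for this; instead you assert that ``uniformizing'' the proof of Theorem~\ref{thm:det_n_characterization} (keeping the difference-hierarchy level as a free variable) would let $\Delta^1_3\ca$ extract from the single sentence $\ouraxiom$ a $\Pi^0_1$ statement at least as strong as $\mathrm{Con}(\Delta^1_3\ca)$. That theorem goes in the opposite direction --- the determinacy schema implies the $\Pi^1_1$ consequences of $\Pi^1_2\ca$ --- and no bookkeeping about the level parameter or about $\Sigma^1_2\ind$ turns it into a proof of the consistency of $\Delta^1_3\ca$ (note also that the paper's contradiction is with G\"odel's theorem for the finitely axiomatized theory $\aca + \ouraxiom$, not for $\Delta^1_3\ca$, which is not finitely axiomatized).

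In addition, your opening steps are vacuous: $\Delta^1_3\ca$ extends $\Pi^1_2\ca$, which already proves each $\sigmadet{n}$ (Theorem~\ref{thm:induction}) and trivially all $\Pi^1_1$ consequences of $\Pi^1_2\ca$, so nothing there uses the hypothesis $\Delta^1_3\ca \vdash \ouraxiom$. The role Theorem~\ref{thm:det_n_characterization} plays in the paper is different and comes at the end: once $\Pi^1_2\ca \vdash \mathrm{Con}(\aca \wedge \ouraxiom)$ is obtained via the $\beta_2$-model step, this $\Pi^0_1$ (hence $\Pi^1_1$) sentence is pulled down to $\aca + \sigmadet{n}$ for some $n\in\omega$, hence to $\aca + \ouraxiom$ itself, and G\"odel's second incompleteness theorem for that consistent, finitely axiomatizable theory gives the contradiction. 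To repair your proof you would need to add both the conservativity reduction to $\Pi^1_2\ca$ and the $\beta_2$-model reflection step; as written, the central implication ``$\ouraxiom$ yields $\mathrm{Con}(\Delta^1_3\ca)$ inside $\Delta^1_3\ca$'' is unsupported.
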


\begin{proof}[Proof sketch] $\Delta^1_3\ca$ is $\Pi^1_4$-conservative over $\Pi^1_2\ca$, so it suffices to show that $\ouraxiom$ is unprovable
in $\Pi^1_2\ca$.

By Theorem \ref{thm:beta}, $\Pi^1_2\ca$ proves the existence of a $\beta_2$-model.
Since every true $\Pi^1_3$ sentence remains true in a $\beta_2$-model,
for every $\Pi^1_3$ sentence $\varphi$ we have:
\[\Pi^1_2\ca \vdash \varphi \Rightarrow \mathrm{Con}(\varphi),\]
where $\mathrm{Con}$ is a formalized consistency statement (cf.\ \cite[Corollary VII.7.8.2]{simpson}). 

Now apply this to $\aca \land \ouraxiom$ as $\varphi$ ($\aca$ is well-known to be axiomatizable by a single $\Pi^1_2$ sentence). If $\varphi$ were provable in $\Pi^1_2\ca$, then also $\mathrm{Con}(\varphi)$ would be provable. But
$\mathrm{Con}(\varphi)$ is $\Pi^0_1$, so by
Theorem \ref{thm:det_n_characterization} it would
follow from $\aca + \sigmadet{n}$ for some $n \in \omega$, and hence from $\varphi$ itself. The theory $\aca + \ouraxiom$ would then contradict G\"odel's second incompleteness theorem.
\end{proof}

Therefore, $\ouraxiom$, and by Theorem \ref{thm:complementation} also the complementation theorem for tree automata, is provable from $\Pi^1_3$- but not $\Delta^1_3$-comprehension. On the other hand,
the argument used to prove Theorem 4.2 in 
\cite{medsalem_tanaka_delta_03} gives:

\begin{theorem}[\cite{medsalem_tanaka_delta_03}]\label{thm:induction}
$\aca + \Delta^1_2\textrm{-}\mathsf{MI}$, and thus also $\Pi^1_2\ca$, proves $\sigmadet{n}$ for all $n \in \omega$, as well as
\[ \forall x\, (\sigmadet{x} \Rightarrow \sigmadet{x+1}).\]
\end{theorem}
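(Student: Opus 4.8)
The plan is to follow the argument that establishes Theorem 4.2 of \cite{medsalem_tanaka_delta_03}, adapting it from the $\mathbf{\Delta^0_3}$ setting to the difference hierarchy over $\Sigma^0_2$ captured by our $(\Sigma^0_2)_x$ formulas. The two assertions — that $\sigmadet{n}$ holds for each $n \in \omega$, and that the implication $\sigmadet{x} \Rightarrow \sigmadet{x+1}$ holds uniformly in $x$ — have a common core: an unravelling/reduction argument showing that a $(\mathbf{\Sigma^0_2})_{x+1}$ game can be analyzed relative to a fixed point of a $\Delta^1_2$ monotone operator built from the $(\mathbf{\Sigma^0_2})_x$ level, and that determinacy propagates through this reduction. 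Since $\aca + \Delta^1_2\textrm{-}\mathsf{MI}$ is, by Theorem \ref{thm:fixpoint}, a consequence of $\Pi^1_2\ca$, it suffices to carry out the whole argument in $\aca + \Delta^1_2\textrm{-}\mathsf{MI}$, and this is what I would do.

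First I would set up the base case. A $(\mathbf{\Sigma^0_2})_0$ game is trivial (the relevant formula is equivalent to $\top$ or to a closed/$\Pi^0_1$ condition after unwinding the definition), and a $(\mathbf{\Sigma^0_2})_1$ game is essentially a $\Pi^0_2$, i.e.\ $F_{\sigma\delta}$-complement, game; by Tanaka's result (cited in the introduction, provable already in $\Pi^1_2\ca$, and in fact in $\aca + \Delta^1_2\textrm{-}\mathsf{MI}$) such games are determined, so $\sigmadet{1}$ holds. Next, for the inductive/uniform step I would fix $x$, assume $\sigmadet{x}$ (as a statement about all games at that level, with parameters), and analyze a $(\mathbf{\Sigma^0_2})_{x+1}$ game $G$ given by a number $x+1$ and a $\Pi^0_2$ formula $\psi(y,f)$. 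Using the observation recorded in Section \ref{sec:preliminaries} that a $(\Sigma^0_2)_{x+1}$ formula is the disjunction of a $\Pi^0_2$ formula with the negation of a $(\Sigma^0_2)_x$ formula, I would stratify the play: either the outermost $\Pi^0_2$ disjunct $\psi(0,f)$ is satisfied (a closed-in-the-limit condition that can be handled by an open-game / Gale–Stewart sub-analysis) or it is not, in which case the remaining winning condition for the appropriate player is exactly a $(\mathbf{\Sigma^0_2})_x$ condition on the tail of the play. The key device, following \cite{medsalem_tanaka_delta_03}, is to define a $\Delta^1_2$ monotone operator $\Gamma$ whose least fixed point, obtained from $\Delta^1_2\textrm{-}\mathsf{MI}$, codes the set of positions from which the relevant player can force the game into a position where the residual game is a $(\mathbf{\Sigma^0_2})_x$ game that — by the inductive hypothesis $\sigmadet{x}$ — is determined. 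One then shows, by induction along the prewellordering $\preccurlyeq$ supplied by $\Delta^1_2\textrm{-}\mathsf{MI}$, that from the fixed point $P$ one player has a winning strategy, and from its complement the other player does; combining these yields a winning strategy in $G$. The fact that this whole reduction is uniform in $x$ is what delivers the displayed implication $\forall x\,(\sigmadet{x} \Rightarrow \sigmadet{x+1})$, while iterating it finitely many times from the base case gives $\sigmadet{n}$ for each concrete $n \in \omega$.

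I expect the main obstacle to be the bookkeeping that certifies the operator $\Gamma$ is genuinely $\Delta^1_2$ and monotone, and that the strategy extracted from the fixed point is actually winning — in particular, verifying that the $\liminf$-of-ranks parity-style reading of the $(\Sigma^0_2)_x$ condition interacts correctly with the transfinite (prewellordered) unravelling, so that a play staying forever outside $P$ really does produce a loss for the intended loser. A secondary point is making sure the induction needed to push the verification through stays within $\Sigma^1_3\textrm{-}\mathsf{IND}$-free territory: Theorem \ref{thm:induction} as stated asks only for $\aca + \Delta^1_2\textrm{-}\mathsf{MI}$, so unlike Theorem \ref{thm:det_x_upperbound} the argument for each fixed level, and for the single-step implication, must avoid invoking $\Sigma^1_3$-induction; this is possible precisely because we never form the $\liminf$ over the external parameter $x$ but only within a single fixed game. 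I would cite \cite[Theorem 4.2]{medsalem_tanaka_delta_03} for the $\mathbf{\Delta^0_3}$ version and indicate that the passage to $(\Sigma^0_2)_x$ formulas is the same routine adaptation already used to obtain Theorem \ref{thm:det_x_upperbound} from the $\mathbf{\Sigma^0_3}$ case, rather than reproducing the fixed-point construction in full.
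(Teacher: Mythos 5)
Your proposal takes essentially the same route as the paper, which offers no independent proof but simply observes that the argument for Theorem 4.2 of MedSalem--Tanaka---the $\Delta^1_2\textrm{-}\mathsf{MI}$ fixed-point reduction of a $(\mathbf{\Sigma^0_2})_{x+1}$ game to determinacy at level $x$, uniform in $x$ and hence free of $\Sigma^1_3\ind$ for the single-step implication---yields the statement over $\aca + \Delta^1_2\textrm{-}\mathsf{MI}$; your reconstruction of that reduction is exactly the intended one. (One cosmetic slip: a $\Pi^0_2$ set is the complement of an $F_\sigma$ set, not of an $F_{\sigma\delta}$ set, though this does not affect the argument.)
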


\begin{corollary}[\cite{medsalem_tanaka_delta_03}]\label{cor:induction}
$\aca + \Delta^1_2\textrm{-}\mathsf{MI}$, and thus also $\Pi^1_2\ca$, proves $\sigmadet{n}^*$ for all $n \in \omega$, as well as
\[ \forall x\, (\sigmadet{x}^* \Rightarrow \sigmadet{x+1}^*).\]
\end{corollary}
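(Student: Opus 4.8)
The goal is to derive Corollary \ref{cor:induction} (the Cantor-space version) from Theorem \ref{thm:induction} (the Baire-space version). The plan is to reduce the Cantor-space statements to the Baire-space statements using Proposition \ref{prop:bairecantor}, which already asserts that $\ouraxiom$ and $\ouraxiomcantor$ are equivalent over $\aca$, together with the observation made just before that proposition: provably in $\aca$, $\sigmadet{x}$ implies $\sigmadet{x}^*$, and $\sigmadet{x+1}^*$ implies $\sigmadet{x}$ (the latter following from \cite[Lemma 4.2]{nemoto_medsalem_tanaka}).

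First I would handle the claim that $\aca + \Delta^1_2\textrm{-}\mathsf{MI}$ proves $\sigmadet{n}^*$ for all $n \in \omega$. This is immediate: by Theorem \ref{thm:induction} the theory proves $\sigmadet{n}$ for every $n \in \omega$, and since a $(\mathbf{\Sigma^0_2})_n$ game in Cantor space is in particular a $(\mathbf{\Sigma^0_2})_n$ game in Baire space in which one simply ignores moves outside $\{0,1\}$, $\aca$ proves $\sigmadet{n} \Rightarrow \sigmadet{n}^*$. Composing gives $\aca + \Delta^1_2\textrm{-}\mathsf{MI} \vdash \sigmadet{n}^*$ for each $n \in \omega$, and a fortiori the same holds for $\Pi^1_2\ca$ by Theorem \ref{thm:fixpoint}.

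Next I would handle the uniform implication $\forall x\,(\sigmadet{x}^* \Rightarrow \sigmadet{x+1}^*)$. Reason inside $\aca + \Delta^1_2\textrm{-}\mathsf{MI}$, fix an arbitrary $x$, and assume $\sigmadet{x}^*$. From $\sigmadet{x}^*$ we cannot directly apply Theorem \ref{thm:induction}'s uniform step, which speaks about Baire-space games; instead I would first observe that $\sigmadet{x}^*$ gives $\sigmadet{x-1}$ when $x \ge 1$ (via the second inequality quoted above), but handling the edge case $x = 0$ separately is cleaner. The smoother route: note that by the quoted consequence of \cite[Lemma 4.2]{nemoto_medsalem_tanaka}, $\aca$ proves $\sigmadet{(x+1)+1}^* \Rightarrow \sigmadet{x+1}$, which is the wrong direction. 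The genuinely workable approach is to avoid chaining through individual levels and instead argue: $\aca + \Delta^1_2\textrm{-}\mathsf{MI}$ proves $\forall x\,(\sigmadet{x} \Rightarrow \sigmadet{x+1})$ by Theorem \ref{thm:induction}; and $\aca$ proves $\sigmadet{x} \Leftrightarrow \sigmadet{x}^*$ for \emph{every} fixed... no — here is the actual subtlety, and it is the main obstacle: Proposition \ref{prop:bairecantor} only asserts the equivalence of the two \emph{quantified} axioms $\ouraxiom$ and $\ouraxiomcantor$, not the levelwise equivalence $\forall x\,(\sigmadet{x} \Leftrightarrow \sigmadet{x}^*)$. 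So I would instead prove the latter levelwise-uniform equivalence directly in $\aca$ — it follows by exactly the argument sketched for Proposition \ref{prop:bairecantor}, namely combining the trivial $\sigmadet{x}\Rightarrow\sigmadet{x}^*$ with $\sigmadet{x+1}^*\Rightarrow\sigmadet{x}$ and then using the uniform step $\forall x\,(\sigmadet{x}\Rightarrow\sigmadet{x+1})$ from Theorem \ref{thm:induction} to bridge the off-by-one gap. Concretely: in $\aca + \Delta^1_2\textrm{-}\mathsf{MI}$, assume $\sigmadet{x}^*$. If $x = 0$, then $\sigmadet{1}^*$ holds trivially since $\sigmadet{0}^*$ and hence $\sigmadet{1}^*$ follow from $\Delta^1_2\textrm{-}\mathsf{MI}$ by the first part of the corollary for $n=0,1$. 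If $x \ge 1$, write $x = y+1$; then $\sigmadet{y+1}^*$ gives $\sigmadet{y}$ by \cite[Lemma 4.2]{nemoto_medsalem_tanaka}, whence $\sigmadet{y+1} = \sigmadet{x}$ and then $\sigmadet{x+1}$ by two applications of the uniform step of Theorem \ref{thm:induction}, and finally $\sigmadet{x+1}^*$ since $\sigmadet{x+1}\Rightarrow\sigmadet{x+1}^*$.

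The one step that requires genuine care is the case split on whether $x = 0$, together with checking that \cite[Lemma 4.2]{nemoto_medsalem_tanaka} really is available in the form "$\aca \vdash \sigmadet{y+1}^* \Rightarrow \sigmadet{y}$" uniformly in $y$ — the text preceding Proposition \ref{prop:bairecantor} asserts exactly this, so no further work is needed there. Everything else is bookkeeping: the implications $\sigmadet{x}\Rightarrow\sigmadet{x}^*$ are proved inside $\aca$ simply because a Cantor-space game is the restriction of a Baire-space game, and the $\Sigma^1_3$-induction needed to carry the quantified versions through is subsumed by $\Delta^1_2\textrm{-}\mathsf{MI}$ via Theorem \ref{thm:induction}. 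I would write this up in three or four lines, citing Theorem \ref{thm:induction}, Proposition \ref{prop:bairecantor}, and the remark about \cite[Lemma 4.2]{nemoto_medsalem_tanaka}.
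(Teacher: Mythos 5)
Your argument is correct and is essentially the paper's own proof: the paper derives the corollary ``immediately'' from Theorem \ref{thm:induction} together with the implications $\sigmadet{x}\Rightarrow\sigmadet{x}^*$ and $\sigmadet{x+1}^*\Rightarrow\sigmadet{x}$, which is exactly the chain you use. The only difference is that you spell out the off-by-one bookkeeping (passing from $\sigmadet{x}^*$ to $\sigmadet{x-1}$ and applying the uniform step of Theorem \ref{thm:induction} twice) and the $x=0$ case, which the paper leaves implicit.
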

\begin{proof}
This follows immediately from Theorem \ref{thm:induction}, $\sigmadet{x} \Rightarrow \sigmadet{x}^*$, and $\sigmadet{x+1}^* \Rightarrow \sigmadet{x}$.
\end{proof}

We now turn to the positional determinacy of parity games. Of course, by Proposition \ref{prop:bairecantor} and Lemma \ref{lem:dwakropkacztery}, $\ouraxiom$ is equivalent to a special case of the positional determinacy of parity games, so the lower bounds on provability of $\ouraxiom$ remain valid also here: the positional determinacy of all parity games is not provable in $\Delta^1_3\ca$. The lemma below shows that also the upper bound of $\Pi^1_2\ca + \Sigma^1_3\ind$, and a fortiori $\Pi^1_3\ca$, remains valid.

\begin{lemma}\label{lem:parity-induction}
For each $n \in \omega$, $\Pi^1_2\ca$ proves that parity games of index $(0,n)$ are positionally determined. Moreover, $\Pi^1_2\ca$ proves that for every $x$, if parity games of index $(0,x)$ are positionally determined, then so are parity games of index $(0,x+1)$.  
\end{lemma}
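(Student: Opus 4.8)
The plan is to formalize inside $\Pi^1_2\ca$ the classical attractor-based proof that parity games are positionally determined (going back to \cite{emerson_jutla}; one may equivalently work with the $\mu$-calculus fixed-point description of the winning regions), carrying out the induction on the number of priorities \emph{externally}. The base case is immediate: in a game of index $(0,0)$ every play is won by $\exists$, who may follow the arithmetically definable positional strategy ``always move to the least available successor,'' and this is provable already in $\aca$. So Part 1 follows from Part 2 by $n$ external applications of modus ponens, and it suffices to prove Part 2. (We note in passing that the reverse, easy direction --- positional determinacy of index-$(0,x)$ games implies $\sigmadet{x}$ --- holds over $\aca$ via a Baire-space version of the translation in the proof of Lemma \ref{lem:dwakropkacztery}; but it is the harder implication that is at stake here.)

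For Part 2, fix $x$, assume that all parity games of index $(0,x)$ are positionally determined, and let $G$ be a parity game of index $(0,x+1)$ on a countable arena $V$, with the top rank $x+1$ actually occurring. The classical argument produces a partition $V = W_\exists \sqcup W_\forall$ into the two winning regions together with a positional winning strategy for the respective player on each part. Its technical core is that $W_\exists$ and $W_\forall$ are built from the winning regions of auxiliary \emph{index-$(0,x)$} games --- the games obtained from sub-arenas of $G$ by deleting a player's attractor to the rank-$(x+1)$ vertices, which contain no rank-$(x+1)$ vertex and are therefore of index $(0,x)$ --- by a transfinite iteration (alternately peeling off attractors) whose limit is the least fixed point of an appropriate monotone operator on subsets of $V$. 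The positional determinacy hypothesis supplies precisely the winning regions and positional strategies of these auxiliary games; the positional strategies for $G$ are then assembled from attractor strategies and those, guided by the prewellordering witnessing the fixed point, and merged into single memoryless strategies by an arithmetical operation once the partition is fixed.

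All of this goes through in $\Pi^1_2\ca$. Attractor sets are least fixed points of arithmetical monotone operators over the countable arena and come with the well-founded rank functions defining attractor strategies; this is provable far below $\Pi^1_2\ca$. The winning region of an index-$(0,x)$ game is $\Delta^1_2$-definable --- a vertex lies in $\exists$'s region iff $\exists$ has a positional winning strategy from that vertex ($\Sigma^1_2$), equivalently, by the positional determinacy hypothesis, iff $\forall$ has none ($\Pi^1_2$) --- hence such a region exists by $\Delta^1_2$-comprehension; granting this, the operator whose least fixed point is (say) $W_\forall$ is $\Delta^1_2$ and monotone, so that fixed point, together with its prewellordering, exists by $\Delta^1_2\textrm{-}\mathsf{MI}$, which holds over $\Pi^1_2\ca$ by Theorem \ref{thm:fixpoint}.

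The main obstacle is the infinite-arena bookkeeping. On \emph{finite} arenas positional determinacy of parity games is provable in extremely weak theories and involves no fixed-point machinery; the content of the lemma lies entirely in the case of countably infinite arenas, where the iteration is genuinely transfinite. One must set up the monotone operator carefully enough that its least fixed point really is the winning region --- in particular so that the intermediate sub-arenas remain legitimate game arenas at limit stages, which is the point at which the naive finite-arena recursion has to be reorganized as a single fixed point --- verify its monotonicity and $\Delta^1_2$-definability, and check that the merged global strategies are genuinely memoryless, all with the comprehension at hand. Finally, since for fixed $x$ the statement ``all parity games of index $(0,x)$ are positionally determined'' is $\Pi^1_3$, Part 1 (for $x = 0$) together with Part 2 and $\Sigma^1_3\ind$ yields over $\Pi^1_2\ca$, and a fortiori in $\Pi^1_3\ca$, the uniform statement that for every $x$ all parity games of index $(0,x)$ are positionally determined --- the upper bound promised in the text before the lemma.
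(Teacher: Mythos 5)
There is a genuine problem with the core step of your induction. The paper's parity games use the \emph{liminf} (min-parity) convention: $\exists$ wins iff $\liminf_i \mathrm{rk}(v_i)$ is even, so the \emph{decisive} rank is the smallest one, $0$, not the largest. Your decomposition peels off ``a player's attractor to the rank-$(x+1)$ vertices'' precisely so that the residual sub-arenas have index $(0,x)$; but under the liminf convention rank $x+1$ is the \emph{least} significant rank, and the soundness of the Zielonka/McNaughton step collapses: in the case where the player attracted to the rank-$(x+1)$ vertices wins the whole residual subgame, the assembled strategy only guarantees that plays either eventually stay in the subgame or visit rank-$(x+1)$ vertices infinitely often --- and the latter says nothing about the liminf, which can be any smaller rank of the wrong parity. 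The correct decomposition (and the one in the paper) peels the attractor to the rank-$0$ vertices; the residual game then has ranks in $\{1,\ldots,x+1\}$, so it is \emph{not} literally of index $(0,x)$ and you need an extra (easy but necessary) dualization/shift of ranks to invoke the inductive hypothesis. As written, your reason for the auxiliary games having index $(0,x)$ is exactly the unsound choice.

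A second, smaller gap: you assert that ``the operator whose least fixed point is (say) $W_\forall$ is $\Delta^1_2$ and monotone,'' but monotonicity is not obvious --- winning regions of subgames do not vary monotonically with the deleted attractor in any evident way, and $\Delta^1_2\textrm{-}\mathsf{MI}$ as stated requires a genuinely monotone operator, so this is the technical crux rather than bookkeeping; likewise, merging the stagewise strategies into one positional strategy is not a purely arithmetical step, since one first needs to \emph{collect} a sequence of winning strategies, which costs $\Sigma^1_2\textrm{-}\mathsf{AC}$ (available in $\Pi^1_2\ca$, but it must be invoked). The paper sidesteps the transfinite peeling altogether: it defines $W_\forall$ in one stroke by comprehension as $\{v:\forall$ has a positional winning strategy in $G_v\}$, proves via $\Sigma^1_2\textrm{-}\mathsf{AC}$ a uniformization claim giving a single positional strategy for $\forall$ on $W_\forall$, uses $\Delta^1_2\textrm{-}\mathsf{MI}$ only for the (arithmetical, obviously monotone) attractor of the rank-$0$ vertices inside the complement $U$, and applies the inductive hypothesis once to the residual game on $U$ minus that attractor, whose ranks lie in $\{1,\ldots,x+1\}$. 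If you repair the rank convention and either prove monotonicity of your iteration operator or switch to the paper's direct definition of the winning region, the rest of your outline (external induction, $\Delta^1_2$ definability of winning regions under the hypothesis, the concluding remark about $\Sigma^1_3\ind$) is fine.
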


\begin{remark}
Our proof of Lemma \ref{lem:parity-induction} is again the formalization of a standard argument in the spirit of the one presented in \cite{thomas}. We strongly believe that the MedSalem-Tanaka argument showing Theorem \ref{thm:induction} can be adapted to give a proof of the step from $(0,x)$ to $(0,x+1)$ in $\aca + \Delta^1_2\textrm{-}\mathsf{MI}$, but we have not attempted to do it.
\end{remark}

\begin{proof}
Since parity games of index $(0,0)$ are trivially positionally determined, it is enough to prove the second part of the statement.
So, assume that all parity games of index $(0,x)$ are positionally determined and let $G = \tuple{V_\exists, V_\forall, v_I, E, \mathrm{rk}}$ be a game of index $(0,x+1)$. For each $v \in V$, we will write $G_v$ to denote a game defined just like $G$ but with starting position $v$ instead of $v_I$.
$\Pi^1_2$-comprehension guarantees the existence of the set
\[W_\forall = \{v: \textrm{ player } \forall \textrm{ has a positional winning strategy in } G_v\}.\]
Let $U$ be $V \setminus W_\forall$. Our aim is to prove that player $\exists$ has a positional winning strategy in $G_v$ for each $v \in U$. Of course, this will in particular imply the positional determinacy of $G$. We first prove a useful claim.

\begin{claim}
Player $\forall$ has a single positional strategy $\sigma_\forall$ which is winning in each $G_v$ for $v \in W_\forall$. Moreover, each play of $G_v$, $v \in W_\forall$, consistent with $\sigma_\forall$ never leaves $W_\forall$.
\end{claim}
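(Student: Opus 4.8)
The plan is to glue the individual positional winning strategies of $\forall$ for the games $G_v$, $v \in W_\forall$, into a single positional strategy, using a priority refinement to ensure that the result is still winning. (Note that a naive choice — say, always moving to the least successor that lies in $W_\forall$ — does \emph{not} work: such a strategy can produce a looping play inside $W_\forall$ whose $\liminf$ of ranks is even.)

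The first step is to produce a uniform sequence of strategies. For $v \in W_\forall$ there is, by definition of $W_\forall$, a positional winning strategy of $\forall$ in $G_v$, and the property ``$\tau$ is a positional winning strategy of $\forall$ in $G_v$'' is arithmetical in the clause ``$\tau$ is positional'' and $\Pi^1_1$ in the clause ``every infinite play consistent with $\tau$ is won by $\forall$'' (the $\liminf$ of ranks being an arithmetical property of a play, since ranks are bounded). Thus the statement $\forall v\,[v \notin W_\forall \vee \exists \tau\,(\tau \textrm{ is a positional winning strategy of }\forall\textrm{ in }G_v)]$ is an instance of $\Sigma^1_2$-choice, with $W_\forall$ and the game as parameters, and this holds by Theorem \ref{thm:choice}. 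Applying it yields a sequence $\tuple{\tau_v}_{v \in \bbn}$ such that $\tau_v$ is a positional winning strategy of $\forall$ in $G_v$ whenever $v \in W_\forall$.

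Working now in $\aca$ with $W_\forall$ and $\tuple{\tau_v}$ as parameters, I would define for each $w \in W_\forall$ the number $\nu(w)$ to be the least $v \in W_\forall$ such that $w$ is reachable from $v$ by some finite partial play of $G_v$ consistent with $\tau_v$; this is well defined, since $w$ itself is such a $v$, witnessed by the trivial partial play, and the reachability relation is arithmetical. I would then let $\sigma_\forall(w):=\tau_{\nu(w)}(w)$ for $w \in W_\forall \cap V_\forall$, and let $\sigma_\forall$ make an arbitrary legal move on $V_\forall \setminus W_\forall$; such a $\sigma_\forall$ exists by arithmetical comprehension.

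The substance of the argument, and the step I expect to be the main obstacle, is the verification that $\sigma_\forall$ is as required. The crucial sub-lemma is: along any play $u_0,u_1,\dots$ of $G$ consistent with $\sigma_\forall$ with $u_0 \in W_\forall$, the play never leaves $W_\forall$ and the sequence $\nu(u_0),\nu(u_1),\dots$ is non-increasing. This is shown by arithmetical induction on $i$: if $u_i \in W_\forall$ and $\alpha:=\nu(u_i)$, fix a $\tau_\alpha$-consistent partial play $p$ of $G_\alpha$ ending at $u_i$; then — whether $u_i \in V_\forall$, in which case $u_{i+1}=\sigma_\forall(u_i)=\tau_\alpha(u_i)$, or $u_i \in V_\exists$, in which case $u_{i+1}$ is an arbitrary successor chosen by $\exists$ — the extension $p'$ of $p$ by $u_{i+1}$ is again a $\tau_\alpha$-consistent partial play of $G_\alpha$ (using that $\tau_\alpha$ is positional and that $\tau_\alpha$-consistency only constrains moves from $V_\forall$), so $u_{i+1}$ is $\tau_\alpha$-reachable from $\alpha$, whence $\nu(u_{i+1}) \le \alpha$; moreover $u_{i+1} \in W_\forall$, because any $\tau_\alpha$-consistent play of $G_{u_{i+1}}$ prepended with $p'$ is a $\tau_\alpha$-consistent play of $G_\alpha$, hence won by $\forall$, and since the parity condition depends only on the tail of a play, $\tau_\alpha$ is a positional winning strategy of $\forall$ in $G_{u_{i+1}}$ too. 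Given the sub-lemma, one finishes by the least number principle: along a $\sigma_\forall$-consistent play from $v \in W_\forall$ the non-increasing sequence $\tuple{\nu(u_i)}$ is eventually constant, equal to some $\beta$ from a stage $i_0$ on; from $i_0$ onward $\forall$ plays $\tau_\beta$ at every $\forall$-position, so prepending to $u_{i_0},u_{i_0+1},\dots$ a $\tau_\beta$-consistent partial play of $G_\beta$ leading from $\beta$ to $u_{i_0}$ yields a $\tau_\beta$-consistent play of $G_\beta$, which is won by $\forall$; by tail-invariance of the parity condition the original play is won by $\forall$ as well. This establishes both clauses of the Claim, and apart from the construction of the sequence $\tuple{\tau_v}$ only arithmetical reasoning is used, so $\Pi^1_2\ca$ suffices throughout.
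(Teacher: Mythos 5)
Your proof is correct and follows essentially the same route as the paper: obtain a sequence of positional winning strategies for $\forall$ via $\Sigma^1_2\textrm{-}\mathsf{AC}$, merge them by a least-index rule, show by induction that the index is non-increasing along any $\sigma_\forall$-consistent play starting in $W_\forall$ (which simultaneously shows the play never leaves $W_\forall$), and conclude by tail-invariance of the parity condition that the play eventually follows one fixed winning strategy. The only difference is the choice of index: the paper takes the least $w$ such that $\sigma_w$ wins $G_v$ (forming the sets $U_v$ by $\Pi^1_1$-comprehension), whereas your reachability-based $\nu$ is arithmetical --- an immaterial variation over $\Pi^1_2\ca$, if marginally more economical.
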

To see that the claim is true, note that by Theorem \ref{thm:choice} we can apply
$\Sigma^1_2\textrm{-}\mathsf{AC}$ to conclude that there exists a sequence of positional strategies $\tuple{\sigma_v}_{v \in W_\forall}$ such that $\sigma_v$ is winning for $p$ in $G_v$. $\Pi^1_1$-comprehension implies the existence of a sequence $\tuple{U_v}_{v \in W_\forall}$ such that for each $v$, 
\[U_v = \{w \in W_\forall :  \sigma_w \textrm{ is winning for } \forall \textrm{ in } G_v\}.\]
Observe that $U_v$ is nonempty for each $v \in W_\forall$. Now define $\sigma_\forall(v)$, $v \in W_\forall \cap V_\forall$, to be $\sigma_w(v)$ for $w = \min(U_v)$; for $v \notin W_\forall$, define $\sigma_\forall(v)$ arbitrarily. Given a play $\tuple{v_i}_{i \in \bbn}$ consistent with $\sigma_\forall$ and such that $v_0 \in W_\forall$, we can prove by induction on $i$ that $v_i \in W_\forall$ and that the sequence $\tuple{\min(U_{v_i})}_{i \in \bbn}$ is nonincreasing. Thus, from some round $i$ onwards the play is actually consistent with some fixed strategy $\sigma_w$ which is winning for $\forall$ in $G_{v_i}$. This proves the Claim.

Note that for $v \in U \cap V_\forall$, all edges from $v$ must lead to vertices in $U$. The Claim also implies that for $v \in U \cap V_\exists$, at least one edge from $v$ must lead to a vertex in $U$.

Let $N$ be the set $U \cap \mathrm{rk}^{-1}[\{0\}]$. Consider the following formula $\varphi(x,X)$:
\begin{multline*}
x \in U \land [x \in N \vee x \in X \\
\vee (x \in V_\exists \textrm{ and there exists an edge from } x \textrm{ into }X)\\
\vee (x \in V_\forall \textrm{ and all edges from } x \textrm{ lead into }X).]
\end{multline*}
Since $\varphi$ is $\Delta^1_2$ (in fact, arithmetical) and contains only positive appearances of the variable $X$, Theorem \ref{thm:fixpoint} implies the existence of the inductively generated least fixed point $P$ of the operation  $X \mapsto \{x\in\bbn: \varphi(x,X)\}$, along with a prewellordering $\preccurlyeq$ such that $P$ is the field of $\preccurlyeq$ and $x \in P \Leftrightarrow \varphi(x, \{y: y \prec x\})$. Note that $\exists$ has a positional strategy $\pi$ on $P \setminus N$ such that $\pi$ guarantees reaching a vertex in $N$ in finitely many steps and staying inside $P$ until that happens. The strategy is defined by: $\pi(v) =$ the smallest $w$ such that $w \prec v$ (this is well-defined on $P \setminus N$ unless $N = \emptyset$, in which case $P = \emptyset$ as well).

Now let $R$ be $U \setminus P$. Note that for $v \in R \cap V_\exists$, all edges from $v$ must lead to vertices in $W_\forall \cup R$, while for $v \in R \cap V_\forall$, at least one edge from
$v$ must lead to a vertex in $R$. So, it makes sense to consider the parity game $G{\upharpoonright_R}$ obtained by restricting $G$ to the induced subgraph on $R$. All vertices in $R$ have ranks from 
$\{1,\ldots,x+1\}$, so by the inductive assumption each game $(G{\upharpoonright_R})_v$ for $v \in R$ is positionally determined. It is not hard to check, using the Claim, that a positional winning strategy for $\forall$ in $(G{\upharpoonright_R})_v$ would also give a positional winning strategy for $\forall$ in $G_v$, so in fact it is player $\exists$ who wins each $(G{\upharpoonright_R})_v$. By an obvious analogue of our Claim for $(G{\upharpoonright_R})_v$, this means that $\exists$ has a single positional strategy $\rho$ which wins $(G{\upharpoonright_R})_v$ for each $v \in R$.

Define a positional strategy $\sigma_\exists$ for $\exists$ as follows: 
\[\sigma_\exists(v) = \left\{
                     \begin{array}{ll}
                       \pi(v) & v \in P \setminus N,\\
                       \rho(v) & v \in R,\\
                       \textrm{arbitrary outside } W_\forall & v \in N,\\
                      \textrm{arbitrary} & v\in W_\forall.

                     \end{array}
              \right. \]
It is easy to verify that for each $v \in U$, $\sigma_\exists$ is winning for $\exists$ in $G_v$. \end{proof}

\section{Decidability}\label{sec:decidability}

Theorem \ref{thm:complementation}, combined with the results discussed in Section \ref{sec:determinacy-comprehension}, leads to an easy argument showing that $\Pi^1_2\ca$ is not only unable to prove the complementation theorem for tree automata, but also unable to prove the decidability of the MSO theory of the infinite binary tree by any other method.

\begin{theorem}\label{thm:decidability} 
$\aca$ proves the implications $(1) \Rightarrow (2_n)$, $n \in \omega$, and $\Pi^1_2\ca$ proves the implications $(2_n) \Rightarrow (3)$, $\omega \ni n \ge 3$, where:
\begin{itemize}
\item[$(1)$] all parity games are positionally determined,
\item[$(2_n)$] there is a Turing machine $\mathfrak{t}$ which halts on every input and accepts exactly the $\Pi^1_n$ sentences of MSO true in $(\{0,1\}^{*},S_0,S_1)$,
\item[$(3)$] $\ouraxiom$.
\end{itemize}
\end{theorem}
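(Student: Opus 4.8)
The plan is to prove the two kinds of implications separately, since they have quite different flavors. For $(1) \Rightarrow (2_n)$, the point is that positional determinacy of all parity games gives, via Corollary~\ref{cor:complementation}, the complementation theorem for tree automata, which in turn gives the full translation of MSO sentences over $(\{0,1\}^*,S_0,S_1)$ into tree automata. More precisely, I would formalize in $\aca$ the standard Büchi--Rabin induction on MSO formulas: quantifier-free formulas are recognized by (easily constructed) tree automata; disjunction corresponds to a product/union construction on automata; a block of existential second-order quantifiers corresponds to projection, which is harmless because tree automata are nondeterministic; and negation is handled precisely by complementation. Given such a uniform translation, deciding the truth of an MSO sentence reduces to deciding emptiness of the corresponding tree automaton, which is decidable by the effective (provable in $\aca$) solution of the emptiness problem --- itself a consequence of positional determinacy of parity games on finite arenas, available already in a weak fragment by \cite{beckmann-moller:parity}. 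Packaging this uniformly in $n$, for each fixed $n \in \omega$ one obtains in $\aca + (1)$ a Turing machine $\mathfrak{t}$ that halts on all inputs and correctly decides truth of $\Pi^1_n$ MSO sentences; the bound on quantifier alternations merely controls the index of the resulting automata, and plays no essential role beyond bookkeeping.

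For the implications $(2_n) \Rightarrow (3)$ with $n \ge 3$, the idea is to encode $(\mathbf{\Sigma^0_2})_x$ Gale--Stewart games --- or by Proposition~\ref{prop:bairecantor}, the Cantor-space variants, equivalently the tree-like parity games of Lemma~\ref{lem:dwakropkacztery} --- as instances of the MSO theory of the infinite binary tree, in such a way that determinacy of the game becomes equivalent to the truth of an MSO sentence of bounded quantifier complexity. Concretely, a tree-like parity game of index $(0,x)$ is specified by a rank function $\mathrm{rk}\colon \{0,1\}^* \to \{0,\ldots,x\}$, which (for the relevant games, arising from $\Pi^0_2$ arithmetical winning conditions) can be coded by a labelled tree, hence by finitely many MSO unary predicates, or, if we want a single uniform sentence, by universally quantifying over all possible rank labellings. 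The existence of a positional winning strategy for $\exists$ (resp.\ $\forall$) is then expressed, exactly as in the proof of $(2)\Rightarrow(1)$ in Theorem~\ref{thm:complementation}, by an MSO sentence with a bounded number ($\le 4$, or $5$ after adding the universal quantifier over labellings) of quantifier blocks: existentially guess the strategy, universally quantify over plays, and assert a first-order ($\forall\exists \vee \exists\forall$) condition on the $\liminf$ of ranks. So ``$G$ is determined'' becomes the disjunction $W^\exists \vee W^\forall$ of two such sentences --- a single $\Pi^1_n$ MSO sentence for a suitable small $n$, and $n = 3$ suffices with a little care in prenexing. The hypothesis $(2_n)$ then tells us that truth of this sentence is decided by the halting machine $\mathfrak{t}$.

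The final step --- and this is where $\Pi^1_2\ca$ is genuinely needed rather than just $\aca$ --- is to convert ``$\mathfrak{t}$ decides these sentences and halts everywhere'' into actual determinacy of all $(\mathbf{\Sigma^0_2})_x$ games. The obstacle is that $(2_n)$ asserts only the \emph{existence} of a correct decision procedure, together with its totality as an arithmetical ($\Pi^0_2$) statement; it does not directly hand us the winning strategies. One resolves this by a reflection/absoluteness argument: by Theorem~\ref{thm:beta}, $\Pi^1_2\ca$ proves the existence of $\beta_2$-models containing any given set, and ``$\mathfrak{t}$ accepts the MSO sentence $\varphi_{\mathrm{Det}(G)}$'' is a $\Sigma^0_1$ (hence arithmetical, hence $\beta_2$-absolute) fact, while by Theorem~\ref{thm:induction} $\Pi^1_2\ca$ already proves $\sigmadet{n}$ for every standard $n$, so inside any $\beta_2$-model the game $G$ is determined, and thus $\mathfrak{t}$ must accept the determinacy sentence; but then, by $\beta_2$-absoluteness of the acceptance fact and correctness of $\mathfrak{t}$ in the real world combined with the \emph{truth} of $\sigmadet{n}$ being a $\Pi^1_3$ statement that holds in $\beta_2$-models, one reflects back the existence of a winning strategy. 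Carefully unwinding this, the argument shows that the totality-and-correctness of $\mathfrak{t}$ forces $\forall x\,\sigmadet{x}^*$, hence $\ouraxiom$ by Proposition~\ref{prop:bairecantor}; the main subtlety to get right is ensuring the determinacy sentence is genuinely $\Pi^1_n$ for the claimed $n \ge 3$ and that the $\beta_2$-reflection is applied to a sentence of the correct logical form, which is exactly why the restriction $n \ge 3$ appears.
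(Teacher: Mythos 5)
Your proof of $(1)\Rightarrow(2_n)$ matches the paper's: complementation via Corollary \ref{cor:complementation}, the standard translation of MSO sentences into tree automata verified in $\aca$ for each fixed $n$, and emptiness testing through parity games on finite arenas. Your setup for $(2_n)\Rightarrow(3)$ also begins as in the paper, with $\Pi^1_3$ MSO sentences $\psi_x$ asserting positional determinacy of tree-like parity games of index $(0,x)$, so that by Lemma \ref{lem:dwakropkacztery} the truth of $\psi_x$ is equivalent to $\sigmadet{x}^*$.

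The final step, however, has a genuine gap. Your $\beta_2$-model reflection only exploits the fact that $\Pi^1_2\ca$ proves $\sigmadet{n}$ for each \emph{standard} $n$; those instances are available in $\Pi^1_2\ca$ without any appeal to $(2_n)$, and they cannot yield the internally quantified statement $\ouraxiom$ (indeed $\Pi^1_2\ca$ alone does not prove it, so any correct argument must use $(2_n)$ for something the standard instances do not give). Nothing in your sketch explains how ``$\mathfrak{t}$ accepts $\psi_x$'' is established for an arbitrary internal $x$, nor how upward absoluteness could apply to a determinacy fact you have not yet shown to hold in the $\beta_2$-model for such $x$. The missing idea --- and the actual role of hypothesis $(2_n)$ --- is that totality of $\mathfrak{t}$ makes the predicate ``$\mathfrak{t}(\psi_x)=\mathsf{yes}$'' arithmetical in $x$, so it can serve as a surrogate for the $\Pi^1_3$ predicate $\sigmadet{x}^*$ in an ordinary induction, which is exactly what is unavailable for $\Pi^1_3$ formulas in $\Pi^1_2\ca$ (no $\Sigma^1_3\ind$). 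Concretely: correctness of $\mathfrak{t}$ gives $\mathfrak{t}(\psi_x)=\mathsf{yes}\Leftrightarrow\sigmadet{x}^*$ uniformly in $x$; Corollary \ref{cor:induction} supplies the base case $\sigmadet{0}^*$ and, crucially, the uniform step $\forall x\,(\sigmadet{x}^*\Rightarrow\sigmadet{x+1}^*)$, which your sketch never uses; by arithmetical comprehension the set $X=\{x:\mathfrak{t}(\psi_x)=\mathsf{yes}\}$ exists, contains $0$, and is closed under successor, hence equals $\bbn$; therefore every $\psi_x$ is true, i.e.\ $\forall x\,\sigmadet{x}^*$, and $\ouraxiom$ follows by Proposition \ref{prop:bairecantor}. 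Without this induction on the machine's answers (or an equivalent device), the reflection argument you describe establishes nothing beyond what $\Pi^1_2\ca$ already proves.
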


\begin{remark}
In an ``ideal version'' of Theorem \ref{thm:decidability}, the statements $(2_n)$ would be replaced by a single statement $(2)$: 
``there is a Turing machine $\mathfrak{t}$ which halts on every input and accepts exactly the  sentences of MSO true in $(\{0,1\}^{*},S_0,S_1)$''. However, by Tarski's theorem on the undefinability of truth, the natural way of expressing 
``sentence $\varphi$ is true in $(\{0,1\}^{*},S_0,S_1)$'' as a property of the variable $\varphi$ cannot be formalized in the language of second-order arithmetic. On the other hand, it \emph{is} possible to formulate a truth definition for $\Pi^1_n$ sentences, in particular for $\Pi^1_n$ sentences of MSO; as  Theorem \ref{thm:decidability} shows, the case $n=3$ is crucial here.
\end{remark}

\begin{remark}
With some modifications, the technique used to prove the theorem could also be employed to show that over $\Pi^1_2\ca$, each statement $(2_n)$ for $\omega \ni n \ge 3$ is exactly equivalent to the positional determinacy of almost treelike parity games. However, in the next section we prove a more general result using a different approach.  
\end{remark}

\begin{proof}
Both directions of the proof rely on ideas similar to those in the proof of $(2) \Rightarrow (1)$ of Theorem \ref{thm:complementation}.

$(1)\Rightarrow (2_n)$. By Corollary \ref{cor:complementation}, $(1)$ implies the complementation theorem for tree automata. Moreover, the implication goes through in $\aca$. 

Just like in the proof of $(2) \Rightarrow (1)$ of Theorem \ref{thm:complementation}, complementation for tree automata makes it possible to formalize the usual algorithm constructing an automaton equivalent to an MSO formula $\varphi$ on all labelled binary trees. Given a fixed $n \in \omega$, the correctness of this algorithm  restricted to $\varphi \in \Pi^1_n$ is easily verified in $\aca$.

It remains to have a procedure to deciding whether a given tree automaton $\Aa$ accepts any tree at all. This is equivalent to the problem whether player $\exists$ wins the game $G_\Aa$ described in the proof of Lemma \ref{lem:regular-tree}, and determining the winner of a parity game on a finite arena is a decidable problem provably well within $\aca$ \cite{beckmann-moller:parity}.

$(2_3) \Rightarrow (3)$ For each $x \in \bbn$, there is a $\Pi^1_3$ MSO sentence $\psi_x$ expressing the positional determinacy of treelike parity games of index $(0,x)$. Modulo minor technicalities, $\psi_x$ is ``for all labellings using labels from $\Sigma_{x,0}$, either $W^\exists_{0,x,0}$ or $W^\forall_{0,x,0}$ holds'', where $\Sigma_{x,k}$, $W^\exists_{0,x,k}$, $W^\forall_{0,x,k}$ are as in the proof of  Theorem \ref{thm:complementation}. By Lemma \ref{lem:dwakropkacztery}, $\psi_x$ is true in
$\{0,1\}^{*}$ exactly if $\sigmadet{x}^*$ holds.

Let $\mathfrak{t}$ be a Turing machine given by (2). For each $x$, $\mathfrak{t}(\psi_x) = \mathsf{yes}$ exactly if $\sigmadet{x}^*$ holds. Therefore, by Corollary \ref{thm:induction}, 
$\mathfrak{t}(\psi_0) = \mathsf{yes}$ and for every $x$, $\mathfrak{t}(\psi_x) = \mathsf{yes}$ implies $\mathfrak{t}(\psi_{x+1}) = \mathsf{yes}$.

By arithmetical comprehension, the set $X = \{x: \mathfrak{t}(\psi_x) = \mathsf{yes}\}$ exists. Since $X$ contains $0$ and is closed under successor, we conclude that $X = \bbn$. Therefore each $\psi_x$ is true in $\{0,1\}^{*}$ and so we have
$\ouraxiom$.
\end{proof}

\section{Rabin's theorem as a reflection principle}\label{sec:reflection}

Our aim now is to prove that over $\Pi^1_2\ca$, Rabin's decidability theorem, complementation for tree automata, and the determinacy statements appearing in Theorem \ref{thm:complementation} and Corollary \ref{cor:complementation} are actually all equivalent. Furthermore, all these statements are equivalent to a logical \emph{reflection principle} stating that all $\Pi^1_3$ sentences provable in $\Pi^1_2\ca$ are true.

Our proofs in this section rely in an essential way on both the results and the techniques of M\"ollerfeld's thesis \cite{mollerfeld_thesis}. At present we see no way of avoiding reliance on these advanced techniques.

To achieve this, we first notice that slight changes to the argument of \cite{mollerfeld_thesis} yield a strengthening of Theorem \ref{thm:det_n_characterization}.

\begin{theorem}\label{thm:det_n_characterization-sharpened}
Over $\aca$, the theory \[\{\sigmadet{n} : n \in \omega\}\] axiomatizes the $\Pi^1_3$ consequences of $\Pi^1_2\ca$.
\end{theorem}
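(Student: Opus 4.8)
The statement is an equality between two collections of $\Pi^1_3$ sentences, and one inclusion is immediate. By Theorem~\ref{thm:induction}, $\Pi^1_2\ca$ proves $\sigmadet{n}$ for every $n\in\omega$, and $\Pi^1_2\ca$ extends $\aca$; hence every sentence provable in $\aca+\{\sigmadet{n}:n\in\omega\}$ — in particular every $\Pi^1_3$ one — is provable in $\Pi^1_2\ca$. The substance of the theorem is the converse: \emph{every} $\Pi^1_3$ sentence provable in $\Pi^1_2\ca$ is already provable in $\aca+\{\sigmadet{n}:n\in\omega\}$. The plan is to revisit the proof of Theorem~\ref{thm:det_n_characterization} from \cite{heinatsch_mollerfeld,mollerfeld_thesis} and keep track of $\Pi^1_3$ formulas where the original argument only tracks $\Pi^1_1$ ones.

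The mechanism driving the $\Pi^1_3$-reflection is a standard observation about $\beta_2$-models. Suppose a theory $\mathrm{T}\supseteq\aca$ proves, for every set $X$ and every standard finite subtheory $F$ of $\Pi^1_2\ca$, the existence of a $\beta_2$-model $M$ with $X\in M$ and $M\models F$. Then $\mathrm{T}$ proves every $\Pi^1_3$ theorem of $\Pi^1_2\ca$: write such a theorem as $\forall X\,\psi(X)$ with $\psi$ a $\Sigma^1_2$ formula, fix a finite $F\subseteq\Pi^1_2\ca$ with $F\vdash\forall X\,\psi(X)$, and, arguing in $\mathrm{T}$, given $X$ take a $\beta_2$-model $M\ni X$ with $M\models F$; then $M\models\psi(X)$, and since $\psi$ is $\Sigma^1_2$ and $M$ is $\beta_2$ (hence $\Sigma^1_2$-correct with respect to its own parameters), $\psi(X)$ is in fact true. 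It therefore suffices to establish this ``schematic $\beta_2$-model existence'' property for $\mathrm{T}=\aca+\{\sigmadet{n}:n\in\omega\}$.

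This is exactly where M\"ollerfeld's route through an arithmetical $\mu$-calculus is used. M\"ollerfeld's thesis relates $\Pi^1_2\ca$ to a theory $\mathsf{M}$ extending $\aca$ by the assertion that arithmetically definable monotone operators (with set parameters) have inductively generated least fixed points together with a prewellordering coding their stages — as in $\Delta^1_2\textrm{-}\mathsf{MI}$, but with the fixed points themselves freely available as parameters in further operators. Two reductions are combined. First, $\Pi^1_2\ca$ is conservative over $\mathsf{M}$ in the relevant sense: a formalized $\omega$-model construction carried out inside $\mathsf{M}$ produces, for each $X$ and each finite $F\subseteq\Pi^1_2\ca$, a $\beta_2$-model containing $X$ that satisfies $F$. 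Second — the Heinatsch--M\"ollerfeld half — $\aca+\{\sigmadet{n}:n\in\omega\}$ already has, for this purpose, the strength of $\mathsf{M}$: determinacy of $(\mathbf{\Sigma^0_2})_n$ games is precisely what is needed to manufacture the iterated least fixed points of $\mathsf{M}$ one finite level at a time, and these can be assembled into $\beta_2$-models of finite fragments of $\mathsf{M}$, hence — composing with the first reduction — of $\Pi^1_2\ca$. The only change relative to the $\Pi^1_1$ case is cosmetic but pervasive: at every stage one insists that the coded model built is $\beta_2$ rather than merely $\Pi^1_1$-sound, and this is what lifts the conservativity from $\Pi^1_1$ to $\Pi^1_3$.

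The main obstacle is precisely this upgrade. The constructions of \cite{mollerfeld_thesis,heinatsch_mollerfeld} are phrased to deliver $\Pi^1_1$-conservativity (equivalently, equiconsistency and agreement of proof-theoretic ordinals), and one must verify that the $\omega$-models they produce — both the models of $\Pi^1_2\ca$-fragments built inside $\mathsf{M}$ and the models of $\mathsf{M}$-fragments built from $\{\sigmadet{n}:n\in\omega\}$ — can be taken $\Sigma^1_2$-correct, uniformly in the parameter $X$. This is a strictly stronger demand than $\Pi^1_1$-soundness, and the verification amounts to checking that the $\mu$-calculus resources of $\mathsf{M}$ (respectively, the determinacy hypotheses) suffice to secure the extra $\Sigma^1_2$-absoluteness at each step of the construction. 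No fundamentally new idea beyond this careful re-examination seems to be required, which is why we give only a sketch.
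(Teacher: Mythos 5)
Your first paragraph (the easy inclusion) and the abstract reflection mechanism you describe are fine, but the lemma everything else rests on --- that $\aca+\{\sigmadet{n} : n\in\omega\}$ (or the arithmetical $\mu$-calculus, which by \cite{heinatsch_mollerfeld} is conservative over it) proves that every set $X$ is contained in a countable coded $\beta_2$-model of a given finite fragment of $\Pi^1_2\ca$ --- is not merely unverified; it is false. Already the statement ``every $X$ lies in some $\beta_2$-model'' (with no demand that the model satisfy any comprehension) yields every instance of $\Pi^1_2$ comprehension over $\aca$: for $\varphi\in\Pi^1_2$ and $A\in M$ with $M$ a $\beta_2$-model, $\{x:\varphi(x,A)\}=\{x: M\models\varphi(x,A)\}$, and the right-hand side is arithmetical in the code of $M$. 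So if your schematic $\beta_2$-model existence were provable, $\aca+\{\sigmadet{n}: n\in\omega\}$ would contain $\Pi^1_2\ca$; by compactness some finite batch $\aca+\{\sigmadet{n}: n\le N\}$ would prove its finite axiomatization; but $\Pi^1_2\ca$ proves each $\sigmadet{n}$ (Theorem \ref{thm:induction}) and the existence of $\beta_2$-models (Theorem \ref{thm:beta}), hence the existence of an $\omega$-model of $\aca+\{\sigmadet{n}: n\le N\}$ and so its consistency --- exactly the argument of Corollary \ref{cor:det_x_lowerbound} --- and G\"odel's second incompleteness theorem gives a contradiction. The same objection blocks the claim that the determinacy axioms can ``assemble'' $\beta_2$-models of fragments of $\mathsf{M}$: the uniform $\Sigma^1_2$-correctness you want to impose at each step is precisely what is unavailable below $\Pi^1_2\ca$. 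The $\beta_2$-model reasoning is sound only with $\Pi^1_2\ca$ as the base theory, which is how it is used in the proof of Theorem \ref{thm:reflection}.

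The paper's actual argument goes in the opposite direction: rather than upgrading the correctness of models, it downgrades the complexity of the sentence. A $\Pi^1_3$ theorem $\pi$ of $\Pi^1_2\ca$ is translated into the set theory $\mathsf{Lim}(\preceq_1)$ corresponding to $\Pi^1_2\ca$, where it becomes a $\Pi_3$ sentence of $L_\in$; Axiom $\beta$ (Mostowski collapse), provable in $\mathsf{Lim}(\preceq_1)$, allows the inner $\Pi^1_1$ part (a well-foundedness assertion) to be replaced by the $\Sigma_1$ assertion that a collapse exists, giving a $\Pi_2$ sentence equivalent to $\pi^*$ over $\mathsf{Lim}(\preceq_1)$ and implying $\pi^*$ over the weak base theory $\mathsf{BT}^r$. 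M\"ollerfeld's $\Pi_2$-conservativity theorem \cite[Theorem 10.4]{mollerfeld_thesis} then applies, and \cite[Theorem 8.15]{mollerfeld_thesis} together with the Heinatsch--M\"ollerfeld conservativity of the arithmetical $\mu$-calculus over $\aca+\{\sigmadet{n} : n\in\omega\}$ carries the conclusion back to second-order arithmetic. This is a proof-theoretic reduction (explicit proof transformations plus cut elimination); no construction of $\Sigma^1_2$-correct $\omega$-models inside the determinacy theory occurs, and by the preceding paragraph none can.
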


\begin{proof}[Proof sketch]
There is a well-known correspondence between sufficiently strong fragments of second-order arithmetic and weak systems of set theory formulated in the usual set-theoretic language $L_\in$ (cf.\ for instance \cite[Chapter VII.3]{simpson}). $L_\in$ is translated into $L_2$ by letting the $\in$-structure of sets be represented by well-founded trees in $\bbn^\bbn$. The inverse translation is the obvious one: $\exists x$ becomes $\exists x \!\in\! \omega^{\mathsf{set}}$, and $\exists X$ becomes $\exists x \!\subseteq\! \omega^{\mathsf{set}}$. (Here we use the symbol  $\omega^{\mathsf{set}}$ for the usual set-theoretic definition of the natural numbers as formalized in $L_\in$. This should not be confused with our use of $\omega$ in the earlier sections.)

$\Pi^1_2\ca$ corresponds to the set theory $\mathsf{Lim}(\preceq_1)$, which consists of the basic axioms known as $\mathsf{BT}^r$ (extensionality, pair, union, $\Delta_0$-separation, and set foundation) and the axiom $\forall x\, \exists y\, [x \in y \wedge y \prec_1 \mathbb{V}]$, where $y \prec_1 \mathbb{V}$ means: ``$y$ is a transitive set, and for all $\bar{a} \in y$ and $\Sigma_1$ formulas $\varphi$ (of $L_\in$), $\varphi(\bar{a})$ iff $y \models \varphi(\bar{a})$''.

Now assume that $\Pi^1_2\ca$ proves the $\Pi^1_3$ sentence $\pi$. It follows that $\mathsf{Lim}(\preceq_1)$ implies the set-theoretic translation $\pi^*$ of $\pi$. As written, $\pi^*$ is a $\Pi_3$ sentence of $L_\in$:
\[\forall x \! \subseteq \! \omega^{\mathsf{set}}
\exists y \! \subseteq \! \omega^{\mathsf{set}} 
\forall z \! \subseteq \! \omega^{\mathsf{set}} \delta(x,y,z),\]
with $\delta$ a bounded formula of $L_\in$. However, 
$\mathsf{Lim}(\preceq_1)$ proves the so-called Axiom $\beta$, which is a variant of the Mostowski collapse lemma: it states that any transitive relation $r$ on a set $a$ can be homomorphically mapped onto the set membership relation on some set $b$. (A statement of Axiom $\beta$, and a sketch of a proof in a subtheory of $\mathsf{Lim}(\preceq_1)$, can be found for instance in \cite[Chapter 11.6]{pohlers}.) Because of this, the $\Pi^1_1$ subformula of $\pi^*$,
$\forall z \! \subseteq \! \omega^{\mathsf{set}} \delta(x,y,z)$, which is equivalent to the statement that a certain relation $r_{x,y}$ parametrized by $x,y$ is well-founded, can provably in $\mathsf{Lim}(\preceq_1)$ be rewritten in a $\Sigma_1$ way: ``there exists a Mostowski collapse of $r_{x,y}$''. Furthermore, the existence of a Mostowski collapse of $r_{x,y}$ implies the well-foundedness of $r_{x,y}$ already
in the weaker theory $\mathsf{BT}^r$ (thanks to the set foundation axiom). What this means is that there is a $\Pi_2$ sentence $\widetilde{\pi^*}$ such that
\begin{align*}
\mathsf{Lim}(\prec_1) & \vdash  \widetilde{\pi^*} \Leftrightarrow \pi^* \\
\mathsf{BT}^r & \vdash \widetilde{\pi^*} \Rightarrow \pi^*. 
\end{align*}
By \cite[Theorem 10.4]{mollerfeld_thesis}, every $\Pi_2$ sentence of $L_\in$ provable in $\mathsf{Lim}(\prec_1)$ is also provable in a certain extension $\mathsf{BT}\sigma^r$ of $\mathsf{BT}^r$. In particular, $\mathsf{BT}\sigma^r$ proves $\widetilde{\pi^*}$ and therefore
also $\pi^*$. However, by \cite[Theorem 8.15]{mollerfeld_thesis}, every $L_2$ sentence whose translation is provable in a theory $\mathsf{Ref} \supseteq \mathsf{BT}\sigma^r$ is itself provable in an arithmetic version of the $\mu$-calculus, which by \cite{heinatsch_mollerfeld} is in turn conservative over $\aca + \{ \sigmadet{n} : n \in \omega\}$.
\end{proof}

Even though Theorem \ref{thm:det_n_characterization-sharpened} concerns relatively strong theories, the work in \cite{heinatsch_mollerfeld,mollerfeld_thesis} involved in its proof relies only on (sketches of) explicit primitive recursive constructions of proofs in various formal theories and, at one point, the cut elimination theorem for first-order logic. Therefore, we have the following corollary of the proof.

\begin{corollary}\label{cor:det_n_characterization-provable}
Primitive recursive arithmetic, and thus a fortiori $\Pi^1_2\ca$, proves that the theory \[\aca + \{\sigmadet{x} : x \in \bbn\}\] axiomatizes the $\Pi^1_3$ consequences of $\Pi^1_2\ca$.
\end{corollary}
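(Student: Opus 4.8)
The plan is to obtain Corollary \ref{cor:det_n_characterization-provable} by carefully re-examining the proof of Theorem \ref{thm:det_n_characterization-sharpened} and checking that every step there is, in fact, a claim about the existence of formal derivations that is witnessed by a primitive recursive construction. Since primitive recursive arithmetic ($\mathrm{PRA}$) proves the totality of any particular primitive recursive function and can verify, via $\Sigma^0_1$-induction, basic syntactic facts about proofs, it suffices to identify the finitely many ingredients in the proof and note that each is of this form. First I would recall that the statement to be proved, ``$\aca + \{\sigmadet{x} : x \in \bbn\}$ axiomatizes the $\Pi^1_3$ consequences of $\Pi^1_2\ca$'', unwinds into two directions: soundness (every consequence of $\aca + \{\sigmadet{x} : x \in \bbn\}$ that is $\Pi^1_3$ is a consequence of $\Pi^1_2\ca$) and completeness (every $\Pi^1_3$ consequence of $\Pi^1_2\ca$ follows from $\aca + \{\sigmadet{x} : x \in \bbn\}$). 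The soundness direction is immediate and provable even in $\aca$, since by Theorem \ref{thm:induction} $\Pi^1_2\ca$ proves each $\sigmadet{n}$; so the content is entirely in the completeness direction, and this is where the proof of Theorem \ref{thm:det_n_characterization-sharpened} does its work.

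Next I would walk through the completeness argument and tag each step. The proof proceeds: (i) translate a given $\Pi^1_3$ proof in $\Pi^1_2\ca$ into a proof of $\pi^*$ in the set theory $\mathsf{Lim}(\prec_1)$ — this is a syntactic interpretation, hence a primitive recursive proof transformation; (ii) using Axiom $\beta$, rewrite the $\Pi^1_1$ part of $\pi^*$ in $\Sigma_1$ form, producing the $\Pi_2$ sentence $\widetilde{\pi^*}$ with $\mathsf{Lim}(\prec_1) \vdash \widetilde{\pi^*} \Leftrightarrow \pi^*$ and $\mathsf{BT}^r \vdash \widetilde{\pi^*} \Rightarrow \pi^*$ — again a uniform, primitive recursive manipulation of the sentence and the relevant proofs; (iii) invoke \cite[Theorem 10.4]{mollerfeld_thesis} to pass from a $\Pi_2$ theorem of $\mathsf{Lim}(\prec_1)$ to a theorem of $\mathsf{BT}\sigma^r$; (iv) invoke \cite[Theorem 8.15]{mollerfeld_thesis} to pass from a theorem of $\mathsf{Ref} \supseteq \mathsf{BT}\sigma^r$ whose translation it is to a theorem of the arithmetic $\mu$-calculus; (v) invoke the conservativity result of \cite{heinatsch_mollerfeld} to pass from a theorem of the arithmetic $\mu$-calculus to a theorem of $\aca + \{\sigmadet{n} : n \in \omega\}$. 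For each of (iii)--(v) I would appeal to the remark already made in the excerpt: the proofs in \cite{mollerfeld_thesis,heinatsch_mollerfeld} consist of explicit primitive recursive proof constructions together with one application of the cut elimination theorem for first-order logic. Cut elimination for first-order logic is the crucial point: its correctness, i.e.\ ``every cut-free first-order derivation computed by the elimination procedure from a given derivation derives the same endsequent'', together with the totality of the (primitive recursive, by the usual hyperexponential bound) elimination procedure, is provable in $\mathrm{PRA}$ — indeed already in $\mathrm{I}\Sigma_1$ or even in much weaker fragments. Assembling these primitive recursive transformations composes to a single primitive recursive function carrying a $\Pi^1_2\ca$-proof of a $\Pi^1_3$ sentence to an $\aca + \{\sigmadet{x} : x \in \bbn\}$-proof of it, and $\mathrm{PRA}$ verifies this composite works.

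The main obstacle I expect is bookkeeping rather than conceptual: one must be sure that all the theories involved ($\Pi^1_2\ca$, $\mathsf{Lim}(\prec_1)$, $\mathsf{BT}^r$, $\mathsf{BT}\sigma^r$, $\mathsf{Ref}$, the arithmetic $\mu$-calculus, $\aca + \{\sigmadet{x} : x \in \bbn\}$) have primitive recursive axiom sets — which they do, since each is a finite extension of a recursively axiomatized base by a scheme with a primitive recursive index set — and that each appeal to a theorem of \cite{mollerfeld_thesis} or \cite{heinatsch_mollerfeld} is genuinely of the ``proof-to-proof'' form rather than relying on a model-theoretic soundness argument that would not formalize in $\mathrm{PRA}$. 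The one place where care is genuinely needed is step (ii), the Axiom-$\beta$/Mostowski-collapse rewriting: one has to check that the passage from $\pi^*$ to $\widetilde{\pi^*}$, and the verification of the two displayed equivalences, is uniform in the sentence $\pi$ and does not secretly use more than $\mathrm{PRA}$ can certify. But since all of this is the routine syntactic content of an interpretation between finitely axiomatized (modulo schemes) theories, $\mathrm{PRA}$ handles it, and the corollary follows. Finally, since $\Pi^1_2\ca$ interprets $\mathrm{PRA}$ (indeed proves $\mathrm{Con}(\mathrm{PRA})$ and far more), the ``a fortiori'' clause is immediate.
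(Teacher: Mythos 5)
Your proposal is correct and follows essentially the same route as the paper: the paper's own justification is precisely the observation that the proof of Theorem \ref{thm:det_n_characterization-sharpened} (via \cite{mollerfeld_thesis} and \cite{heinatsch_mollerfeld}) consists only of explicit primitive recursive proof transformations plus one use of cut elimination for first-order logic, all of which primitive recursive arithmetic can verify. Your additional remarks---handling the easy soundness direction via Theorem \ref{thm:induction} and flagging that the Axiom-$\beta$ rewriting and the cited conservation results must be genuinely proof-to-proof---are exactly the bookkeeping the paper leaves implicit.
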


The reflection principle $\mathrm{RFN}_{\Pi^1_3}(\Pi^1_2\ca)$ is the formalized statement ``every $\Pi^1_3$ sentence provable in $\Pi^1_2\ca$ is true'', or
\[\forall \varphi \! \in \! \Pi^1_3\, [\mathrm{Pr}_{\Pi^1_2\ca}(\varphi) \Rightarrow \mathrm{Tr}(\varphi)],\]
where $\mathrm{Pr}$ is a standard formalized provability predicate and $\mathrm{Tr}$ is a truth definition for $\Pi^1_3$ sentences. Note that 
$\mathrm{RFN}_{\Pi^1_3}(\Pi^1_2\ca)$ implies $\mathrm{Con}(\Pi^1_2\ca)$ and hence is unprovable in $\Pi^1_2\ca$.

\begin{theorem}\label{thm:reflection}
The following are equivalent over $\Pi^1_2\ca$:
\begin{itemize}
\item[$(1)$] all parity games are positionally determined,
\item[$(2)$] $\ouraxiom$,
\item[$(3)$] for every tree automaton $\Aa$ there exists a tree automaton $\Bb$ such that for any tree $T$, $\Bb$ accepts $T$ exactly if $\Aa$ does not accept $T$,
\item[$(4)$] there is a Turing machine $\mathfrak{t}$ which halts on every input and accepts exactly the $\Pi^1_3$ sentences of MSO true in 
$(\{0,1\}^{<\bbn},S_0,S_1)$,
\item[$(5)$] $\mathrm{RFN}_{\Pi^1_3}(\Pi^1_2\ca)$.
\end{itemize}
\end{theorem}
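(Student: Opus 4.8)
The plan is to establish the cycle of implications
$(5) \Rightarrow (1) \Rightarrow (3) \Rightarrow (4) \Rightarrow (2) \Rightarrow (5)$, pulling together the ingredients assembled in the earlier sections. Three of these arrows are already essentially available: $(1) \Rightarrow (3)$ is Corollary \ref{cor:complementation} (even over $\aca$); $(3) \Rightarrow (4)$ for $n=3$ is the $(1) \Rightarrow (2_n)$ half of Theorem \ref{thm:decidability}, except that there we assumed positional determinacy of parity games rather than complementation directly --- but inspection of that proof shows complementation for tree automata plus the decidability of the emptiness problem for parity games on finite arenas is all that was used, so it goes through from $(3)$ over $\aca$; and $(4) \Rightarrow (2)$ is the $(2_3) \Rightarrow (3)$ half of Theorem \ref{thm:decidability}, which uses $\Pi^1_2\ca$ via Corollary \ref{cor:induction}. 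So the two arrows that carry the real weight are $(5) \Rightarrow (1)$ and $(2) \Rightarrow (5)$.

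For $(2) \Rightarrow (5)$: assume $\ouraxiom$ and argue inside $\Pi^1_2\ca$. Let $\varphi$ be a $\Pi^1_3$ sentence with $\mathrm{Pr}_{\Pi^1_2\ca}(\varphi)$; we must derive $\mathrm{Tr}(\varphi)$. By Corollary \ref{cor:det_n_characterization-provable}, $\Pi^1_2\ca$ proves that $\aca + \{\sigmadet{x} : x \in \bbn\}$ axiomatizes the $\Pi^1_3$ consequences of $\Pi^1_2\ca$; hence inside $\Pi^1_2\ca$ we obtain that $\varphi$ is provable from $\aca + \{\sigmadet{x} : x \in \bbn\}$, indeed (by formalized compactness) from $\aca + \sigmadet{n}$ for some $n \in \bbn$. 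But $\ouraxiom$ gives all the instances $\sigmadet{n}$, $n \in \bbn$, and $\aca$ holds; a formalized soundness argument for $\Pi^1_3$ sentences --- i.e.\ $\mathrm{Pr}_{\mathrm{T}}(\varphi) \wedge ``\mathrm{T}\textrm{ is true}" \Rightarrow \mathrm{Tr}(\varphi)$ for $\Pi^1_3$ $\varphi$, which $\Pi^1_2\ca$ proves --- then yields $\mathrm{Tr}(\varphi)$. The slightly delicate point is matching the external arithmetization in Corollary \ref{cor:det_n_characterization-provable} with the internal truth predicate $\mathrm{Tr}$, but since $\aca$ and each $\sigmadet{n}$ are themselves $\Pi^1_3$ (indeed $\aca$ is $\Pi^1_2$) this is routine bookkeeping with the standard partial truth definitions of \cite[Chapter VII]{simpson}.

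For $(5) \Rightarrow (1)$: this is where the argument must do genuine work, and I expect it to be the main obstacle. The idea is that $\mathrm{RFN}_{\Pi^1_3}(\Pi^1_2\ca)$ should let us ``import'' into our model any $\Pi^1_3$ theorem of $\Pi^1_2\ca$, and by Lemma \ref{lem:parity-induction} the positional determinacy of parity games of index $(0,n)$ is, for each fixed $n \in \omega$, a theorem of $\Pi^1_2\ca$; moreover it is $\Pi^1_3$. So for each standard $n$ we get positional determinacy at index $(0,n)$ from $(5)$. To obtain the full statement $(1)$ --- positional determinacy for \emph{all} indices $x$, including nonstandard ones --- we invoke the uniform step of Lemma \ref{lem:parity-induction}: $\Pi^1_2\ca$ proves $\forall x\,[\textrm{index }(0,x)\textrm{ pos.\ det.} \Rightarrow \textrm{index }(0,x{+}1)\textrm{ pos.\ det.}]$, and crucially this implication is itself $\Pi^1_3$ (it is of the form $\forall x\,(\Pi^1_3 \Rightarrow \Pi^1_3)$, hence $\Pi^1_3$ after prenexing, since the hypothesis is $\Pi^1_3$ and one can check the combined form stays within $\Pi^1_3$ --- the winning-strategy existential sits under the universal $\forall x$ but in front of the arithmetical matrix). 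Thus $(5)$ delivers both the base case at $0$ and the uniform inductive step; formalizing the induction on $x$ inside $\Pi^1_2\ca$ (which has $\Sigma^1_2$-induction, more than enough for a $\Pi^1_3$ predicate once the step is in hand) then gives $(1)$. The subtlety to watch is exactly the complexity accounting in the inductive step: one must confirm that ``for all $x$, if parity games of index $(0,x)$ are positionally determined then so are those of index $(0,x{+}1)$'' can be put in $\Pi^1_3$ form, because $\mathrm{RFN}_{\Pi^1_3}$ only reflects $\Pi^1_3$ sentences; if the naive prenexing overshoots into $\Pi^1_4$ one has to replace it by a $\Pi^1_3$ sentence that is $\Pi^1_2\ca$-provably equivalent, along the lines of the $\widetilde{\pi^*}$ trick in the proof of Theorem \ref{thm:det_n_characterization-sharpened}. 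Once $(5) \Rightarrow (1)$ and $(2) \Rightarrow (5)$ are in place, the remaining arrows close the cycle and the theorem follows.
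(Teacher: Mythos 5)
Your overall skeleton (reusing Corollary \ref{cor:complementation} and Theorem \ref{thm:decidability} and reducing to $(5)\Rightarrow(1)$ and $(2)\Rightarrow(5)$) matches the paper, and your $(2)\Rightarrow(5)$ is in outline the paper's argument; but be aware that the ``formalized soundness'' step you dismiss as routine is where the actual work lies. There is no single truth predicate covering all formulas that may occur (as cut formulas) in an internal proof from $\aca+\sigmadet{x}$, so you cannot simply induct along the proof with the $\Pi^1_3$ truth definition, and induction for the relevant higher-complexity statements is not available in $\Pi^1_2\ca$. The paper instead assumes the $\Pi^1_3$ sentence $\varphi$ false, uses Theorem \ref{thm:beta} to get a countable coded $\beta_2$-model $M$ containing the counterexample, notes that the true $\Pi^1_3$ sentences $\aca$ and $\sigmadet{x}$ hold in $M$, and then runs an \emph{arithmetical} induction along the proof using the satisfaction relation of the countable coded model $M$ (which is arithmetical in the code of $M$), reaching a contradiction inside $M$. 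Without this $\beta_2$-model device (or an equally careful substitute), your soundness claim is unjustified.

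The genuine gap is in $(5)\Rightarrow(1)$. First, the complexity bookkeeping fails: $\forall x\,(\varphi(x)\Rightarrow\varphi(x+1))$ with $\varphi(x)$ the $\Pi^1_3$ statement of positional determinacy at index $(0,x)$ prenexes to $\Pi^1_4$, not $\Pi^1_3$, and there is no evident $\Pi^1_2\ca$-provably equivalent $\Pi^1_3$ form; the $\widetilde{\pi^*}$ device concerns the set-theoretic classes $\Sigma_1/\Pi_2$ via Axiom $\beta$ and does not apply here. Second, and decisively, reflecting the inductive step is beside the point: since the step is a theorem of the base theory $\Pi^1_2\ca$ (Lemma \ref{lem:parity-induction}), its truth is available to you directly, with no reflection at all. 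What is missing is the \emph{induction}: the predicate $\varphi(x)$ is $\Pi^1_3$, and $\Pi^1_2\ca$ proves only $\Sigma^1_2\ind$, not $\Sigma^1_3\ind$, so your remark that $\Sigma^1_2$-induction is ``more than enough for a $\Pi^1_3$ predicate'' is exactly backwards. Indeed, if that induction were available, $(1)$ and hence $(2)$ would be outright theorems of $\Pi^1_2\ca$, contradicting Corollary \ref{cor:det_x_lowerbound} (and the fact, via $(5)$, that $(2)$ implies $\mathrm{Con}(\Pi^1_2\ca)$). The paper's way around this is to run the induction at the level of \emph{provability} rather than truth: from $\mathrm{Pr}_{\Pi^1_2\ca}(\varphi_0)$ and $\mathrm{Pr}_{\Pi^1_2\ca}(\ulcorner\forall v\,(\varphi(v)\Rightarrow\varphi(v+1))\urcorner)$, a low-complexity induction on codes of proofs yields $\mathrm{Pr}_{\Pi^1_2\ca}(\varphi_x)$ for every $x\in\bbn$ (including nonstandard $x$), and then $(5)$ is applied pointwise to each $\Pi^1_3$ sentence $\varphi_x$ to obtain $\mathrm{Tr}(\varphi_x)$, i.e.\ positional determinacy at every index. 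You should rework your $(5)\Rightarrow(1)$ along these lines.
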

\begin{proof}
Given Theorems \ref{thm:complementation} and \ref{thm:decidability}, it is enough to prove $(5) \Rightarrow (1)$ and $(2) \Rightarrow (5)$.
We reason in $\Pi^1_2\ca$.

The implication from (5) to (1) is essentially immediate. Let $\varphi(v)$ be a $\Pi^1_3$ formula with free variable $v$ stating that all parity games of index $(0,x)$ are positionally determined. For each $x \in \bbn$, let $\varphi_x$ be the $\Pi^1_3$ sentence obtained by substituting a canonical name (numeral) for $x$ into $\varphi(v)$. By Lemma \ref{lem:parity-induction}, we have $\mathrm{Pr}_{\Pi^1_2\ca}(\varphi_0)$ and 
$\mathrm{Pr}_{\Pi^1_2\ca}(\ulcorner\forall v\, (\varphi(v) \Rightarrow \varphi(v+1))\urcorner)$, so also also $\mathrm{Pr}_{\Pi^1_2\ca}(\varphi_x)$ for each $x \in \bbn$. 
By (5), we obtain $\mathrm{Tr}(\varphi_x)$ for each $x$, so indeed all parity games are positonally determined.

Now assume (2) and consider a $\Pi^1_3$ sentence $\varphi$ of the form \[\forall X\, \exists Y\, \forall Z\, \psi(X,Y,Z),\]
with $\psi$ arithmetical, such that 
$\mathrm{Pr}_{\Pi^1_2\ca}(\varphi_0)$. By Corollary \ref{cor:det_n_characterization-provable}, there is some 
$x \in \bbn$ such that $\forall X\, \exists Y\, \forall Z\, \psi(X,Y,Z)$ is provable from $\aca + \sigmadet{x}$.

Assume that $\varphi$ is false. This means that there is some set $S$ such that $\forall Y\, \exists Z\, \neg \psi(S,Y,Z)$. By Theorem \ref{thm:beta}, there is a countable coded $\beta_2$-model $M$ such that $S \in M$. Since $M$ is a $\beta_2$-model, we have
\[M \models \forall Y\, \exists Z\, \neg \psi(S,Y,Z).\]
However, since both $\aca$ and $\sigmadet{x}$ are true and have complexity no higher than $\Pi^1_3$, we also have
\[M \models \aca \land \sigmadet{x}.\]

Now, take the proof $\pi$ 
of $\forall X\, \exists Y\, \forall Z\, \psi(X,Y,Z)$ from 
$\aca \land \sigmadet{x}$. Since $M$ is countable, we can express
the statement ``the $y$-th formula in $\pi$ is true in $M$'' as an
arithmetical formula $\tau(y)$. By induction on $y$, $\tau(y)$ holds
for all $y$ smaller than the length of $\pi$. But the last formula in $\pi$ is 
$\forall X\, \exists Y\, \forall Z\, \psi(X,Y,Z)$,
so
\[M \models \forall X\, \exists Y\, \forall Z\, \psi(X,Y,Z),\]
a contradiction. This proves (5).
\end{proof}

\section{Further work}

\begin{description}
\item[Determinacy] As part of Theorem \ref{thm:reflection}, we prove the equivalence of the determinacy principle $\ouraxiom$ and positional determinacy of all parity games. Our proof of this equivalence relies on the advanced techniques of \cite{mollerfeld_thesis}, goes through an intermediate step involving the reflection principle $\mathrm{RFN}_{\Pi^1_3}(\Pi^1_2\ca)$, and requires $\Pi^1_2\ca$ as a base theory. 

Is there a simpler, more elementary proof of this equivalence with $\Pi^1_2\ca$ as the base theory? Is there a proof in $\aca$? As a possibly easier question, does either of the implications in Corollary \ref{cor:complementation} reverse over $\aca$?
\item[Büchi's theorem] Büchi's Theorem states that nondeterministic \emph{word} automata are closed under complementation (this implies the decidability of the MSO theory of $(\bbn, \le)$). Because $\aca$ proves that word automata can be determinized (cf.\ the proof of Theorem \ref{thm:complementation}), also Büchi's Theorem is provable within $\aca$. However, what is the exact reverse-mathematical strength of the theorem?
\item[Weak MSO over the binary tree] Can statements related to the decidability of the \emph{weak} MSO theory of the infinite binary tree be characterized as determinacy principles, for instance as the determinacy of Boolean combinations of open games? 

\item[MSO theory over the reals ${\mathbb R}$] In \cite{shelah} it shown that the MSO theory of the real line with the natural ordering is undecidable. However, once the quantification is restricted to $F_\sigma$ sets, the theory is decidable \cite{rabin_dec}. Is there a proof of this decidability result not relying on the full strength of Rabin's theorem? 
\end{description}


\bibliographystyle{abbrv}

\bibliography{biblio}

\end{document}